\newtheorem{theorem}{Theorem}[section]
\newtheorem{lemma}[theorem]{Lemma}
\newtheorem{proposition}[theorem]{Proposition}
\newtheorem{corollary}[theorem]{Corollary}
\theoremstyle{definition}
\newtheorem{definition}[theorem]{Definition}
\newtheorem{remark}[theorem]{Remark}
\newtheorem{assumption}[theorem]{Assumption}
\newtheorem{agreement}[theorem]{Agreement}
\newcommand{\IR}{\mathbb{R}}
\newcommand{\IC}{\mathbb{C}}
\newcommand{\IN}{\mathbb{N}}
\newcommand{\Lop}{{\mathcal{L}}}
\renewcommand{\S}{{\mathrm{S}}}
\newcommand{\m}{\mathrm{m}}
\newcommand{\HH}{\mathcal{H}}
\newcommand{\V}{\mathcal{V}}
\renewcommand{\a}{\mathfrak{a}}
\newcommand{\g}{\Gamma}
\newcommand{\gb}{\Gamma_B}
\newcommand{\gbs}{\Gamma_B^*}
\newcommand{\pb}{\Pi_B}
\newcommand{\prmeas}{\|\gamma_t(x)\|_{\Lop(\IC^N)}^2}
\newcommand{\boxmeas}{\frac{\d x \, \d t}{t}}
\newcommand{\NN}{\mathfrak{N}}
\renewcommand{\L}{\mathrm{L}}
\renewcommand{\H}{\mathrm{H}}
\newcommand{\C}{\mathrm{C}}
\newcommand{\E}{{\mathfrak{E}}}
\renewcommand{\d}{\mathrm{d}}
\renewcommand{\i}{\mathrm{i}}
\newcommand{\eps}{\varepsilon}
\newcommand{\ind}{{\mathbf{1}}}
\newcommand{\z}{z}
\newcommand{\argdot}{\cdot}
\newcommand{\abs}[1]{\lvert#1\rvert}
\newcommand{\cl}[1]{\overline{#1}}
\newcommand{\bd}{\partial}
\DeclareMathOperator{\supp}{supp}
\DeclareMathOperator{\dist}{d}
\DeclareMathOperator{\diam}{diam}
\renewcommand{\Re}{\operatorname{Re}}
\DeclareMathOperator{\Id}{Id}
\newcommand{\Rg}{\mathcal{R}}
\newcommand{\Ke}{\mathcal{N}}
\newcommand{\dom}{\mathcal{D}}
\numberwithin{equation}{section} 
\newcommand{\labitem}[2]{%
\def\@itemlabel{#1}
\item[\normalfont(#1)]
\def\@currentlabel{#1}\label{#2}}
\title[Kato follows from an Extrapolation Property of the Laplacian]{The Kato Square Root Problem follows from an Extrapolation Property of the Laplacian}
\author{Moritz Egert, Robert Haller-Dintelmann, and Patrick Tolksdorf}
\address{Fachbereich Mathematik, Technische Universit\"at Darmstadt, Schlossgartenstr. 7, 64289 Darmstadt, Germany}
\email{egert@mathematik.tu-darmstadt.de}
\email{haller@mathematik.tu-darmstadt.de}
\email{tolksdorf@mathematik.tu-darmstadt.de}
\thanks{The first and the third author were supported by ``Studienstiftung des deutschen Volkes''.}
\keywords{Kato's square root problem, sectorial and bisectorial operators, functional calculus, quadratic estimates, Carleson measures}
\subjclass[2010]{35J57, 47A60, 42B37}
\date{February 27, 2015}
\begin{document}
\begin{abstract}
On a domain $\Omega \subseteq \IR^d$ we consider second-order elliptic systems in divergence-form with bounded complex coefficients, realized via a sesquilinear form with domain $\H_0^1(\Omega) \subseteq \V \subseteq \H^1(\Omega)$. Under very mild assumptions on $\Omega$ and $\V$ we show that the solution to the Kato Square Root Problem for such systems can be deduced from a regularity result for the fractional powers of the negative Laplacian in the same geometric setting. This extends earlier results of M\textsuperscript{c}Intosh \cite{KatoMultiplier} and Axelsson-Keith-M\textsuperscript{c}Intosh \cite{AKM} to non-smooth coefficients and domains.
\end{abstract}
\maketitle

\section{Introduction}
\label{Sec: Introduction}

\noindent We consider a second-order $m \times m$ elliptic system
\begin{align*}
 Au = -\sum_{\alpha, \beta = 1}^d \partial_\alpha(a_{\alpha, \beta} \partial_{\beta} u)
\end{align*}
in divergence-form with bounded $\IC^{m \times m}$-valued coefficients $a_{\alpha, \beta}$ on a domain $\Omega \subseteq \IR^d$. As usual, $A$ is interpreted as a maximal accretive operator on $\L^2(\Omega)$ via a sesquilinear form defined on some closed subset $\V$ of $\H^1(\Omega)$ that contains $\H_0^1(\Omega)$. A fundamental question due to Kato \cite{Katos-Conjecture} and refined by Lions \cite{Counterexample_Lions}, having made history as the \emph{Kato Square Root Problem}, is whether $A$ has the \emph{square root property} $\dom(\sqrt{A}) = \V$, i.e.\ whether the domain of the maximal accretive square root of $A$ coincides with the form domain. 

Whereas for self-adjoint $A$ this is immediate from abstract form theory \cite{Kato}, the full problem remained open for almost 40 years. It were Auscher, Hofmann, Lacey, M\textsuperscript{c}Intosh, and Tchamitchian, who eventually gave a proof on $\Omega = \IR^d$ exploiting the full strength of harmonic analysis \cite{Kato-Square-Root-Proof, Kato-Systems-Proof}. Shortly after, Auscher and Tchamitchian used localization techniques to solve the Kato Square Root Problem on strongly Lipschitz domains $\Omega$ complemented by either pure Dirichlet or pure Neumann boundary conditions \cite{AT2}. These refer to the cases $\V = \H_0^1(\Omega)$ and $\V = \H^1(\Omega)$. For a survey we refer to \cite{McIntosh-KatoSurvey, Kato-Square-Root-Proof} and the references therein.

A milestone toward general form domains has then been set by Axelsson, Keith, and M\textsuperscript{c}Intosh \cite{AKM-QuadraticEstimates, AKM}, who introduced an operator theoretic framework that allows to cast the Kato Square Root Problem for almost arbitrary $\Omega$ and $\V$ as an abstract first-order problem. By these means they gave a solution if $\Omega$ is a smooth domain, $D$ is a smooth part of the boundary $\bd \Omega$, and $\V$ is the subspace of $\H^1(\Omega)$ containing those functions that vanish on $D$ -- and moreover for global bi-Lipschitz images of these configurations \cite{AKM}.

Much earlier, in 1985 M\textsuperscript{c}Intosh revealed another profound structural aspect of the Kato Square Root Problem: Assuming some smoothness on the coefficients and the domain $\Omega$, he proved that on arbitrary form domains $\V$ the affirmative answer to Kato's problem follows if the square root property for the easiest elliptic differential operator -- the self-adjoint negative Laplacian -- can be extrapolated to fractional powers of exponent slightly above $\frac{1}{2}$, cf.\ \cite{KatoMultiplier}. A similar approach has been pursued in \cite{AKM}.

Our main result is a reduction theorem in this spirit for second-order elliptic systems whose coefficients are merely bounded. We do so under significantly weaker geometric assumptions than in \cite{AKM} and \cite{KatoMultiplier} but in contrast to \cite{KatoMultiplier} we have to assume that the form domain is invariant under multiplication by smooth functions. As an application we have obtained an extension of previous results on the Kato Square Root Problem for mixed boundary conditions \cite{Darmstadt-KatoMixedBoundary}. The key technique is a \emph{$\pb$-type theorem} in the spirit of \cite{AKM-QuadraticEstimates}, which we state as our second main result and which allows for further applications, e.g.\ to prove well-posedness of boundary value problems on cylindrical domains, see the upcoming work of P.\ ~Auscher and the first author.

The paper is organized as follows. After introducing some notation and the geometric setup in Section~\ref{Sec: Notation and general assumptions}, we state our main results in Section~\ref{Sec: Main results}. The hypotheses underlying our $\pb$-theorem are discussed in Section~\ref{Sec: The abstract framework}. In Section~\ref{Sec: Proof of main result} we deduce our main result from the $\pb$-theorem. For the reader's convenience, necessary tools from functional calculus are recalled beforehand in Section~\ref{Sec: Functional calculi}. In the remaining sections we develop the proof of the $\pb$-theorem. Our argument builds upon the techniques being introduced in \cite{AKM-QuadraticEstimates} as did many other square root type results before \cite{AKM, Morris, Bandara, Bandara2}, but as a novelty allows the presence of a non-smooth boundary. We suggest to keep a copy of \cite{AKM-QuadraticEstimates} handy as duplicated arguments with this paper are omitted.
\section{Notation and General Assumptions}
\label{Sec: Notation and general assumptions}

\noindent Most of our notation is standard. Throughout, the dimension $d \geq 2$ of the underlying Euclidean space is fixed. The open ball in $\IR^d$ with center $x$ and radius $r>0$ is denoted by $B(x,r)$. For abuse of notation we use the symbol $\abs{\argdot}$ for both the Euclidean norm of vectors in $\IC^n$, $n \geq 1$, as well as for the $d$-dimensional Lebesgue measure. For $z \in \IC$ we put $\langle z \rangle := 1 + \abs{z}$. The Euclidean distance between subsets $E$ and $F$ of $\IR^d$ is $\d(E,F)$. If $E = \{x \}$, then the abbreviation $\d(x,E)$ is used. The complex logarithm $\log$ is always defined on its principal branch $\IC \setminus (-\infty,0]$. The indicator function of a set $E \subseteq \IR^d$ is $\ind_E$ and for convenience we abbreviate the maps $z \mapsto 1$ and $z \mapsto z$ by $1$ and $\z$, respectively. For average integrals the symbol $\fint$ is used.

We allow ourselves the freedom to write $a \lesssim b$ if there exists $C > 0$ not depending on the parameters at stake such that $a \leq C b$ holds. Likewise, we use the symbol $\gtrsim$ and we write $a \simeq b$ if both $a \lesssim b$ and $b \lesssim a$ hold.

\subsection{Function spaces} 
\label{Subsec: Function spaces}

The Hilbert space of square integrable, $\IC^n$-valued functions on a Borel set $\Xi \subseteq \IR^d$ is $\L^2(\Xi; \IC^n)$. If $\Xi$ is open, then $\H^1(\Xi; \IC^n)$ is the associated first-order Sobolev space with its usual Hilbertian norm and $\H_0^1(\Omega; \IC^n)$ denotes the $\H^1$-closure of $\C_c^\infty(\Xi ; \IC^n)$, the space of smooth functions with compact support in $\Xi$. The \emph{Bessel potential spaces} with differentiability $s>0$ and integrability $2$ are $\H^{s,2}(\IR^d; \IC^n)$, see \cite[Sec.\ ~2.3.3]{Triebel} and $\H^{s,2}(\Xi; \IC^n):= \{u|_\Xi: u \in \H^{s,2}(\IR^d; \IC^n)\}$ is equipped with the quotient norm
$\|u\|_{\H^{s,2}(\Xi ; \IC^n)}:= \inf \{\|v\|_{\H^{s,2}(\IR^d; \IC^n)}: v = u \text{\, a.e.\ on $\Xi$}\}$.

\subsection{Operators on Hilbert spaces}
\label{Subsec: Operators on Hilbert spaces}

Any Hilbert space $\HH$ under consideration is taken over the complex numbers. Concerning linear operators we follow the standard notation. If $B_1$ and $B_2$ are operators in $\HH$ then $B_1 + B_2$ and $B_1 B_2$ are defined on their natural domains
\begin{align*}
 \dom(B_1 + B_2):= \dom(B_1) \cap \dom(B_2) \quad \text{and} \quad \dom(B_1 B_2):= \{u \in \dom (B_2): B_2 u \in \dom(B_1) \}.
\end{align*}
Their \emph{commutator} is $[B_1, B_2]:= B_1 B_2 - B_2 B_1$.

\subsection{Geometric setup and the elliptic operator}
\label{Subsec: The elliptic operator}

In this section we define the elliptic operator $Au = -\sum_{\alpha, \beta = 1}^d \partial_\alpha(a_{\alpha, \beta} \partial_{\beta} u)$ under consideration properly by means of Kato's form method \cite{Kato}. Starting from now, the codimension $m \geq 1$ -- the number of ``equations'' -- is fixed.

Throughout this work we assume the following geometric setup.

\begin{assumption}
\label{Ass: Geometric setup}
\begin{enumerate}

 \labitem{$\Omega$}{d} We assume that $\Omega \subseteq \IR^d$ is a \emph{$d$-set} in the sense of Jonsson/Wallin~\cite{Jonsson-Wallin}, i.e.\ that it satisfies the \emph{$d$-Ahlfors} or \emph{measure density condition}
\begin{align*}
\hspace{30pt} \abs{\Omega \cap B(x,r)} \simeq r^d \qquad(x \in \Omega,\, 0<r \leq 1). 
\end{align*}

\labitem{$\bd \Omega$}{d-1} We assume that $\bd \Omega$ is a \emph{$(d-1)$-set} in the sense of Jonsson/Wallin~\cite{Jonsson-Wallin}, i.e.\ that it satisfies the \emph{Ahlfors-David condition}
\begin{align*}
 \hspace{30pt} \m_{d-1}(\bd \Omega \cap B(x,r))\simeq r^{d-1} \qquad(x \in \bd \Omega,\, 0<r \leq 1),
\end{align*}
where here and throughout $\m_{d-1}$ denotes the $(d-1)$-dimensional Hausdorff measure.

\labitem{$\V$}{V} We assume that $\V$ is a closed subspace of $\H^1(\Omega; \IC^m)$ that contains $\H_0^1(\Omega; \IC^m)$ and is stable under multiplication by smooth functions in the sense
\begin{align*}
 \varphi \V \subseteq \V \qquad (\varphi \in \C_c^\infty(\IR^d; \IC)).
\end{align*}
Moreover, we assume that $\V$ has the \emph{$\H^1$-extension property}, i.e.\ that there exists a bounded operator $\E: \V \to \H^1(\IR^d; \IC^m)$ such that $\E u = u$ a.e.\ on $\Omega$ for each $u \in \V$.

\labitem{$\alpha$}{alpha} We assume that for some $\alpha \in (0,1)$ the complex interpolation space $[\L^2(\Omega; \IC^m),\V]_\alpha$ coincides with $\H^{\alpha,2}(\Omega; \IC^m)$ and that their norms are equivalent. 
\end{enumerate}
\end{assumption}

Let us comment on these assumptions.

\begin{remark}
\label{Rem: Remark on geometry}
 \begin{enumerate}

  \item The stability assumption on $\V$ is e.g.\ satisfied for the usual choices of $\V$ modeling (mixed) Dirichlet and Neumann boundary conditions \cite{Ouhabaz, Darmstadt-KatoMixedBoundary}. 

  \item The $\H^1$ extension property for $\V$ is trivially satisfied if $\Omega$ admits a bounded Sobolev extension operator $\E: \H^1(\Omega; \IC^m) \to \H^1(\IR^d; \IC^m)$. In this case also \eqref{d} holds \cite[Thm.\ ~2]{Hajlasz-Koskela-Tuominen}.

  \item Assumption $(d-1)$ is common in the treatment of boundary value problems, being among the weakest geometric conditions that allow to define boundary traces, cf.\ \cite{Jonsson-Wallin}.

  \item Assumption $(\alpha)$ should be considered as a geometric one. A common way to force its validity is to assume that $\Omega$ is a Sobolev extension domain and that
  \begin{align}
  \label{McIntosh condition}
  \tag{M\textsuperscript{c}}
    \big[\L^2(\Omega; \IC^m),\H_0^1(\Omega; \IC^m)\big]_\alpha = \big[\L^2(\Omega; \IC^m),\H^1(\Omega; \IC^m)\big]_\alpha
  \end{align}
  holds up to equivalent norms. Indeed, \eqref{alpha} then follows from $\H_0^1(\Omega; \IC^m) \subseteq \V \subseteq \H^1(\Omega; \IC^m)$ and standard interpolation results \cite[Sec.\ 1.2.4/2.4.2]{Triebel}. The condition \eqref{McIntosh condition} has been introduced in this context by M\textsuperscript{c}Intosh \cite{KatoMultiplier}.

  \item Among the vast variety of Sobolev extension domains satisfying \eqref{d-1} and M\textsuperscript{c}Intosh's condition for all $\alpha \in (0, \frac{1}{2})$ are the whole space $\IR^d$ \cite[Sec.\ 2.4.1]{Triebel}, the upper half space $\IR_+^d$ \cite[Sec.\ ~2.10]{Triebel} from which the result for special Lipschitz domains can be deduced, as well as bounded Lipschitz domains \cite[Thm.\ 3.1]{Griepentrog-InterpolationOnGroger}, \cite[Sec.\ 4.3.1]{Triebel}. Assumption~\ref{Ass: Geometric setup} then reduces to the stability assumption on $\V$. However, configurations in which $\Omega$ is not a Sobolev extension domain though \eqref{d}, \eqref{d-1}, \eqref{V}, and \eqref{alpha} are satisfied, naturally occur in the treatment of mixed boundary value problems, cf.\ \cite{Darmstadt-KatoMixedBoundary} and the references therein.
 \end{enumerate}
\end{remark}

Concerning the coefficients of $A$ we make the following standard assumption.

\begin{assumption}
\label{Ass: Ellipticity}
We assume $a_{\alpha, \beta} \in \L^{\infty}(\Omega; \IC^{m \times m})$ for all $1 \leq \alpha, \beta \leq d$ and that the associated sesquilinear form
\begin{align*}
 \a: \V \times \V \to \IC, \quad \a(u,v) = \sum_{\alpha, \beta = 1}^d \int_\Omega a_{\alpha, \beta}(x) \partial_\beta u(x) \cdot \cl{\partial_\alpha v(x)}
\end{align*}
is elliptic in the sense that for some $\lambda > 0$ it satisfies the \emph{G\aa{}rding inequality}
\begin{align}
\label{Garding inequality}
 \Re (\a(u,u)) \geq \lambda \|\nabla u\|_{\L^2(\Omega; \IC^{dm})}^2 \qquad (u \in \V).
\end{align}
\end{assumption}

Since $\V$ is dense in $\L^2(\Omega; \IC^m)$ and $\a$ is elliptic, classical form theory \cite[Ch.\ VI]{Kato} yields that the associated operator $A$ on $\L^2(\Omega; \IC^m)$ given by
\begin{align*}
  \a(u,v) = \langle Au, v\rangle_{\L^2(\Omega; \IC^m)} \qquad (u \in \dom(A),\, v \in \V)
\end{align*}
on
\begin{align*}
 \dom(A) := \big \{u \in \V: \text{$\a(u, \argdot)$ boundedly extends to $\L^2(\Omega; \IC^m)$} \big \}
\end{align*}
is \emph{maximal accretive}. By this we mean that $A$ is closed and for $z$ in the open left complex halfplane $z - A$ is invertible with $\|(z - A)^{-1}\|_{\Lop(\L^2(\Omega; \IC^m))} \leq \abs{\Re (z)}^{-1}$. The choice $a_{\alpha, \beta} = \delta_{\alpha, \beta} \Id_{\IC^{m \times m}}$, where $\delta$ is Kronecker's delta, yields the negative of the (coordinatewise) \emph{weak Laplacian} $\Delta_\V$ with form domain $\V$.

Maximal accretivity allows to define \emph{fractional powers} $(\eps + A)^\alpha$ for all $\alpha, \eps \geq 0$ by means of the functional calculus for sectorial operators, see Section~\ref{Sec: Functional calculi}. The so-defined \emph{square root} $\sqrt{A}$ of $A$ is the unique maximal accretive operator such that $\sqrt{A} \sqrt{A} = A$ holds, cf.\ \cite[Thm.\ V.3.35]{Kato} and \cite[Cor.\ ~7.1.13]{Haase}.

\section{Main Results}
\label{Sec: Main results}

\noindent The main result we want to prove in this paper is the following.

\begin{theorem}
\label{Thm: Main Theorem}
Let Assumptions~\ref{Ass: Geometric setup} and \ref{Ass: Ellipticity} be satisfied and let $\Delta_\V$ be the weak Laplacian with form domain $\V$. If for the \emph{same} $\alpha$ as in Assumption~\ref{Ass: Geometric setup}
\begin{align}
\label{Emb: Embedding condition}
\tag{E}
\dom( (1- \Delta_\V)^{1/2 + \alpha/2}) \subseteq \H^{1 + \alpha,2} (\Omega ; \IC^m)
\end{align}
with continuous inclusion, then $A$ has the square root property
\begin{align*}
\dom(\sqrt{A}) = \dom(\sqrt{1+A}) = \V \quad \text{with} \quad \| (\sqrt{1 + A}) u \|_{\L^2(\Omega ; \IC^m)} \simeq \|  u \|_{\V} \qquad (u \in \V).
\end{align*}
\end{theorem}

By a classical result \cite[Thm.\ VI.2.23]{Kato} the self-adjoint operator $1-\Delta_\V$ has the square root property $\dom(\sqrt{1-\Delta_\V}) = \V \subseteq \H^1(\Omega; \IC^m)$. Hence, our main result may informally be stated as follows:

\medskip

\begin{center}
\begin{minipage}{0.86\textwidth}
 \textit{If the square root property for the negative Laplacian with form domain $\V$ extrapolates to fractional powers with exponent slightly above $\frac{1}{2}$, then every elliptic differential operator in divergence form with form domain $\V$ has the square root property.}
\end{minipage}
\end{center}

\begin{remark}
\label{Rem: Remark to the main result}
\begin{enumerate}

 \item The conditions \eqref{McIntosh condition} and \eqref{Emb: Embedding condition} are those imposed by M\textsuperscript{c}Intosh \cite{KatoMultiplier} to solve the Kato Square Root Problem for operators $A$ with H\"{o}lder continuous coefficients.
 
 \item In applications it usually suffices that \eqref{alpha} and \eqref{Emb: Embedding condition} hold for different choices of $\alpha$ since then, by interpolation, both conditions can be met simultaneously for some possibly smaller value of $\alpha$, cf.\ \cite{Darmstadt-KatoMixedBoundary}.
\end{enumerate}
\end{remark}

In Section~\ref{Sec: The abstract framework} we will deduce Theorem~\ref{Thm: Main Theorem} from the following \emph{$\pb$-theorem}. In fact, Theorem~\ref{Thm: The pibe theorem} is a generalization of the main result in \cite{AKM} to non-smooth domains. For the notion of bisectorial operators see Section~\ref{Sec: Functional calculi}. Corollary~\ref{Cor: Abstract Kato problem} is discussed in more detail at the end of Section~\ref{Sec: Functional calculi}.

\begin{theorem}
\label{Thm: The pibe theorem}
Let $k \in \IN$ and $N=km$. On the Hilbert space $\HH:= (\L^2(\Omega; \IC^m))^k$ consider operators $\g$, $B_1$, and $B_2$ satisfying \eqref{H1} - \eqref{H7}, see Section~\ref{Sec: The abstract framework}. Then the \emph{perturbed Dirac type operator} $\pb:= \g + B_1 \g^* B_2$ is bisectorial of some angle $\omega \in (0,\frac{\pi}{2})$ and satisfies quadratic estimates
\begin{align}
\label{Eq: Quadratic estimate for pb}
 \int_0^\infty \|t \pb (1+t^2 \pb^2)^{-1}u\|_\HH^2 \; \frac{\d t}{t} \simeq \|u\|_\HH^2 \qquad (u \in \cl{\Rg(\pb)}).
\end{align}
Moreover, implicit constants depend on $B_1$ and $B_2$ only through the constants quantified in \eqref{H2}.
\end{theorem}

\begin{corollary}
\label{Cor: Abstract Kato problem}
The part of $\pb$ in $\cl{\Rg(\pb)}$ is an injective bisectorial operator of angle $\omega$ with a bounded $\H^\infty(\S_\psi)$-calculus for each $\psi \in (\omega, \frac{\pi}{2})$. In particular, it shares the \emph{Kato square root type estimate}
\begin{align*}
 \dom({\textstyle \sqrt{\pb^2}}) = \dom(\pb) \quad \text{with} \quad \|{\textstyle \sqrt{\pb^2}} u\|_\HH \simeq \|\pb u\|_\HH \qquad (u \in \dom(\pb)).
\end{align*}
\end{corollary}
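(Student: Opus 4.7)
The strategy is to derive everything from the quadratic estimate \eqref{Eq: Quadratic estimate for pb} via the standard Hilbert-space functional calculus machinery of McIntosh. First, I would show that $\HH$ splits topologically as $\HH = \Ke(\pb) \oplus \cl{\Rg(\pb)}$. For a bisectorial operator on a reflexive space this is a classical fact: the uniformly bounded operators $\i t \pb(1 + \i t \pb)^{-1}$ converge strongly as $t \to 0^+$ and $t \to \infty$ to two complementary projections with ranges $\cl{\Rg(\pb)}$ and $\Ke(\pb)$, respectively; the needed strong convergence to zero on $\cl{\Rg(\pb)}$ is an immediate consequence of the quadratic estimate, while on $\Ke(\pb)$ the values are trivial. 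Writing $\tilde\pb$ for the part of $\pb$ in $\cl{\Rg(\pb)}$, this summand is stable under the resolvents of $\pb$, so $\tilde\pb$ inherits bisectoriality of angle $\omega$, and the splitting forces it to be injective with dense range.

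Second, the quadratic estimate of Theorem~\ref{Thm: The pibe theorem} transfers verbatim to $\tilde\pb$, and McIntosh's theorem for bisectorial operators on Hilbert space applies: a two-sided quadratic estimate with respect to $\psi_0(z) = z/(1+z^2)$ is equivalent to a bounded natural $\H^\infty(\Sigma_\psi)$ calculus for every $\psi \in (\omega,\tfrac{\pi}{2})$. I would cite the material of Section~\ref{Sec: Functional calculi} for this equivalence, which immediately yields the first assertion of the corollary.

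Third, the Kato square root estimate would follow by inserting two bounded holomorphic functions into the calculus. The functions $f(z) = z/\sqrt{z^2}$ and $g(z) = \sqrt{z^2}/z$ are bounded and holomorphic on every open bisector, with $f$ taking the values $\pm 1$ on the two rays, and they satisfy the algebraic identities $z = f(z)\sqrt{z^2}$ and $\sqrt{z^2} = g(z) z$. The compatibility of the bisectorial calculus of $\tilde\pb$ with the sectorial calculus of $\tilde\pb^2$ (to be recorded in Section~\ref{Sec: Functional calculi}) then yields $\tilde\pb = f(\tilde\pb)\sqrt{\tilde\pb^2}$ and $\sqrt{\tilde\pb^2} = g(\tilde\pb)\tilde\pb$ on the respective domains, and boundedness of $f(\tilde\pb)$ and $g(\tilde\pb)$ gives $\dom(\sqrt{\tilde\pb^2}) = \dom(\tilde\pb)$ with $\|\sqrt{\tilde\pb^2}\, u\|_\HH \simeq \|\tilde\pb u\|_\HH$ on $\cl{\Rg(\pb)}$. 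Since $\pb$ and $\sqrt{\pb^2}$ both vanish on $\Ke(\pb)$ and respect the splitting, this estimate lifts at once to all of $\dom(\pb) \subseteq \HH$.

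The genuine obstacle is not in this chain of implications -- which is essentially formal once the quadratic estimate is in hand -- but rather in the bookkeeping needed to ensure that the several functional calculi involved (bisectorial for $\pb$, sectorial for $\pb^2$, and the resulting square root) are consistently identified, so that the operator $\sqrt{\pb^2}$ appearing in the statement of the corollary really coincides with the functional calculus object produced by the argument. This identification is to be handled by the preliminaries of Section~\ref{Sec: Functional calculi}.
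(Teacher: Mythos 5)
Your proposal is correct and follows essentially the same route as the paper: the splitting $\HH = \Ke(\pb) \oplus \cl{\Rg(\pb)}$ together with Proposition~\ref{Prop: Quadratic estimate implies bounded Hinfty calculus} gives the bounded $\H^\infty(\Sigma_\psi)$ calculus for the part of $\pb$ in $\cl{\Rg(\pb)}$, and the functions $\z/\sqrt{\z^2}$ and $\sqrt{\z^2}/\z$ together with the compatibility of the bisectorial and sectorial calculi (Corollary~\ref{Cor: Square root compatibilty}) and $\Ke(\pb) \subseteq \Ke(\sqrt{\pb^2})$ yield the Kato square root type estimate. The only cosmetic difference is that you re-derive the topological splitting from the quadratic estimate, whereas the paper invokes it as a standard property of bisectorial operators and settles the domain identifications and the kernel inclusion by citing Haase.
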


\section{Functional Calculi}
\label{Sec: Functional calculi}

\noindent We recall the functional calculi for sectorial and bisectorial operators. For sectorial operators we follow the treatment in \cite[Ch.\ ~2]{Haase}. Good references for the bisectorial case are \cite{Duelli-Thesis}, \cite{Duelli-Weiss}, see also \cite[Ch.~3]{Eigene_Diss}
 
Throughout, given $\varphi \in (0,\pi)$, denote by $\S_\varphi^+:= \{z \in \IC \setminus \{0\}: \abs{\arg{z}} < \varphi \}$ the open \emph{sector} with vertex $0$ and opening angle $2 \varphi$ symmetric around the positive real axis. If $\varphi \in (0, \frac{\pi}{2})$ then $\S_{\varphi}:= \S_\varphi^+ \cup (-\S_\varphi^+)$ is the corresponding open \emph{bisector}. An operator $B$ on a  Hilbert space $\HH$ is \emph{sectorial} of \emph{angle} $\varphi \in (0,\pi)$ if its spectrum is contained in $\cl{\S_\varphi^+}$ and 
\begin{align*}
 \sup \big\{ \|\lambda (\lambda - B)^{-1}\|_{\Lop(\HH)}: \lambda \in \IC \setminus \cl{\S_\psi^+} \big\} < \infty \qquad (\psi \in (\varphi, \pi)).
\end{align*}
Likewise, $B$ is \emph{bisectorial} of \emph{angle} $\varphi \in (0,\frac{\pi}{2})$ if $\sigma(B) \subseteq \cl{\S_\varphi}$ and
\begin{align*}
 \sup \big\{ \|\lambda (\lambda - B)^{-1}\|_{\Lop(\HH)}: \lambda \in \IC \setminus \cl{\S_\psi} \big \} < \infty \qquad (\psi \in (\varphi, \tfrac{\pi}{2})).
\end{align*}
A sectorial or bisectorial operator $B$ on $\HH$ necessarily is densely defined and induces a topological decomposition $\HH = \Ke(B) \oplus \cl{\Rg(B)}$, see \cite[Prop.\ 2.1.1]{Haase} for the sectorial case. The bisectorial case can be treated similarly.

\subsection{Construction of the functional calculi}
\label{Subsec: Construction of the functional calculi}

For an open set $U \subseteq \IC$ denote by $\H^\infty(U)$ the Banach algebra of bounded holomorphic functions on $U$ equipped with the supremum norm $\|\cdot\|_{\infty,U}$ and let
\begin{align*}
\H_0^\infty (U) := \left\{g \in \H^\infty(U) \,\middle|\, \exists \, C,s>0 \; \forall \, z \in U: \abs{g(z)} \leq  C \min\{\abs{z}^s, \abs{z}^{-s}\}\right\}
\end{align*}
be the subalgebra of \emph{regularly decaying} functions.

The holomorphic functional calculus for a sectorial operator $B$ of angle $\varphi\in(0,\pi)$ on a Hilbert space $\HH$ is defined as follows. For $\psi \in (\varphi, \pi)$ and $f\in \H_0^\infty (\S_\psi^+)$ define $f(B)\in \Lop(\HH)$ via the Cauchy integral
\begin{align*}
f(B):=\frac{1}{2\pi \i}\int_{\partial \S_\nu^+}f(z)(z-B)^{-1} \; \d z,
\end{align*}
where $\nu \in(\varphi,\psi)$ and the boundary curve $\partial \S_\nu^+$ surrounds $\sigma(B)$ counterclockwise. This integral converges absolutely and is independent of the particular choice of $\nu$ due to Cauchy's theorem. Furthermore, define $g(B):=f(B)+c(1+B)^{-1}+d$ if $g$ is of the form $g=f + c(1+\z)^{-1} + d$ for $f\in\H^{\infty}_{0} (\S_\psi^+)$ and $c,d\in \IC$. This yields an algebra homomorphism
\begin{align*}
\mathcal{E}(\S_\psi^+):=\H_0^\infty (\S_\psi^+)\oplus\langle (1+\z)^{-1}\rangle\oplus\langle\ind\rangle\rightarrow \mathcal{L}(X),\quad g\mapsto g(B), 
\end{align*}
the \emph{primary holomorphic functional calculus} for the sectorial operator $B$. It can be extended to a larger class of holomorphic functions by \emph{regularization} \cite[Sec.\ 1.2]{Haase}: If $f$ is a holomorphic function on $\S_\psi^+$ for which there exists an $e \in \mathcal{E}(\S_\psi^+)$ such that $ef \in \mathcal{E}(\S_\psi^+)$ and $e(B)$ is injective, define $f(B):= e(B)^{-1}(ef)(B)$. This yields a closed and (in general) unbounded operator on $\HH$ and the definition is independent of the particular regularizer $e$. If holomorphic functions $f,g: \S_\psi^+ \to \IC$ can be regularized, then the composition rules
\begin{align}
\label{Eq: Composition in Hinfty calculus}
f(B) + g(B) \subseteq (f+g)(B) \quad \text{and} \quad f(B)g(B) \subseteq (fg)(B)
\end{align}
hold true and $\dom(f(B)g(B)) = \dom((fg)(B)) \cap \dom(g(B))$, cf.\ \cite[Prop.\ 1.2.2]{Haase}.

In particular, for each $\alpha > 0$ and each $\eps \geq 0$ the function $(\eps + \z)^\alpha$ is regularizable by $(1+\z)^{-k}$ for $k$ a natural number larger than $\alpha$ and yields the \emph{fractional power} $(\eps + B)^\alpha$. The domain of $(\eps + B)^\alpha$ is independent of $\eps \geq 0$. Many rules for fractional powers of complex numbers remain valid for these operators, see \cite[Sec.\ 3.1]{Haase} for details. If $B$ is injective, then each $f \in \H^\infty(\S_\psi^+)$ is regularizable by $\z(1+\z)^{-2}$ yielding the \emph{$\H^\infty(\S_\psi^+)$-calculus} for $B$. 

The holomorphic functional calculus for bisectorial operators can be set up in exactly the same manner by replacing sectors $\S_\psi^+$ by the respective bisectors $\S_\psi$ and resolvents $(1+B)^{-1}$ by $(\i + B)^{-1}$. It shares all properties of the sectorial calculus listed above. 

If $B$ is bisectorial of angle $\varphi \in (0 ,\frac{\pi}{2})$, then $B^2$ is sectorial of angle $2 \varphi$. We remark that this correspondence is compatible with the respective functional calculi.

\begin{lemma}
\label{Lem: Compatibility of calculi for sectorial and bisectorial operators}
Let $B$ be a bisectorial operator of angle $\varphi \in (0, \frac{\pi}{2})$ on a Hilbert space $\HH$, let $\psi \in (\varphi, \frac{\pi}{2})$, and let $f \in \H_0^\infty(\S_{2\psi}^+)$. Then $f(\z^2)(B)$ and $f(B^2)$ defined via the holomorphic functional calculi for the bisectorial operator $B$ and the sectorial operator $B^2$ respectively, coincide. 
\end{lemma}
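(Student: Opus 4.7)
The plan is to compute both operators via their defining Cauchy integrals and match the integrands through a contour substitution. Since $f \in \H_0^\infty(\Sigma_{2\psi,+})$, say $\abs{f(w)} \le C\min\{\abs{w}^s,\abs{w}^{-s}\}$, the function $g(z):=f(z^2)$ belongs to $\H_0^\infty(\Sigma_\psi)$ with decay $\min\{\abs{z}^{2s},\abs{z}^{-2s}\}$. Hence both $f(\z^2)(B)$ (via the primary bisectorial calculus for $B$) and $f(B^2)$ (via the primary sectorial calculus for $B^2$, which is sectorial of angle $2\varphi<2\psi$) are bounded operators defined by absolutely convergent Cauchy integrals. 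Fix $\nu \in (\varphi,\psi)$ and set $\Gamma_+ := \partial\Sigma_{\nu,+}$ with its CCW orientation; then $\partial\Sigma_\nu$ decomposes as $\Gamma_+ \cup \Gamma_-$ with $\Gamma_- = -\Gamma_+$ (equally CCW, since $z\mapsto -z$ preserves orientation).

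The key algebraic step is the partial-fraction identity
\begin{equation*}
 (z^2 - B^2)^{-1} = \tfrac{1}{2z}\bigl[(z - B)^{-1} + (z + B)^{-1}\bigr] \qquad (z \in \rho(B)\cap\rho(-B),\, z\neq 0),
\end{equation*}
which follows from $(z-B)(z+B) = z^2 - B^2$. Starting from the bisectorial formula
\begin{equation*}
 f(\z^2)(B) = \frac{1}{2\pi\i}\int_{\partial\Sigma_\nu} f(z^2)\,(z - B)^{-1}\,\d z,
\end{equation*}
I split the contour into $\Gamma_+$ and $\Gamma_-$ and apply the substitution $w = -z$ on the $\Gamma_-$ piece. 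Since $\Gamma_- = -\Gamma_+$ as oriented curves, this gives $\int_{\Gamma_-} f(z^2)(z-B)^{-1}\,\d z = \int_{\Gamma_+} f(w^2)(w+B)^{-1}\,\d w$, so the two pieces combine into
\begin{equation*}
 f(\z^2)(B) = \frac{1}{2\pi\i}\int_{\Gamma_+} f(z^2)\bigl[(z-B)^{-1} + (z+B)^{-1}\bigr] \,\d z = \frac{1}{2\pi\i}\int_{\Gamma_+} 2z\, f(z^2)\,(z^2 - B^2)^{-1}\,\d z,
\end{equation*}
using the partial-fraction identity in the second equality.

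Finally, apply the substitution $\lambda = z^2$, $\d\lambda = 2z\,\d z$. The squaring map is a biholomorphism of $\Sigma_{\nu,+}$ (and its boundary rays, away from $0$) onto $\Sigma_{2\nu,+}$ (and its boundary rays), and a direct check of each of the two rays of $\Gamma_+$ shows that it sends the CCW-oriented contour $\partial\Sigma_{\nu,+}$ bijectively onto the CCW-oriented contour $\partial\Sigma_{2\nu,+}$. Therefore
\begin{equation*}
 f(\z^2)(B) = \frac{1}{2\pi\i}\int_{\partial\Sigma_{2\nu,+}} f(\lambda)\,(\lambda - B^2)^{-1}\,\d\lambda = f(B^2),
\end{equation*}
where the last equality is the sectorial Cauchy formula for $B^2$ with $2\nu\in(2\varphi,2\psi)$.

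The main obstacle is purely bookkeeping: one has to verify that the decomposition $\partial\Sigma_\nu = \Gamma_+ \cup (-\Gamma_+)$ respects the positive orientations chosen for the bisectorial calculus, and that $z\mapsto z^2$ carries the CCW boundary of $\Sigma_{\nu,+}$ onto the CCW boundary of $\Sigma_{2\nu,+}$ (without doubling or reversal). Both follow from elementary parametrizations of the rays. All contour manipulations are justified by absolute convergence, which is inherited from the $\H_0^\infty$ decay of $f$ and $f\circ\z^2$.
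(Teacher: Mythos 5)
Your argument is correct and is precisely the ``straightforward transformation of the defining Cauchy integrals'' that the paper's proof appeals to after noting $f(\z^2)\in\H_0^\infty(\Sigma_\psi)$: splitting the bisector contour into the two sector boundaries, using $(z^2-B^2)^{-1}=\tfrac{1}{2z}\bigl[(z-B)^{-1}+(z+B)^{-1}\bigr]$, and substituting $\lambda=z^2$ with the orientation bookkeeping you carried out. So this is essentially the same approach, with the details the paper leaves implicit filled in correctly.
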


\begin{proof}
Note that $\z^2$ maps the bisector $\S_\psi$ onto the sector $\S_{2\psi}^+$. Hence, $g:=f(\z^2) \in \H_0^\infty(\S_\psi)$ and the claim follows by a straightforward transformation of the defining Cauchy integrals.
\end{proof}

\begin{corollary}
\label{Cor: Square root compatibilty}
Suppose the setting of Lemma~\ref{Lem: Compatibility of calculi for sectorial and bisectorial operators} and let $\beta > 0$. Then $(\z^2)^\beta(B) = \z^\beta(B^2)$.
\end{corollary}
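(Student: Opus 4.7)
The strategy is to compute both operators using a \emph{common regularizer} passed through the map $w = z^2$, and to leverage Lemma~\ref{Lem: Compatibility of calculi for sectorial and bisectorial operators} to identify the resulting $\H_0^\infty$ pieces. Fix an integer $k > \beta$ and set $h(w) := w^\beta(1+w)^{-k}$. The factor $w^\beta$ gives decay of order $\beta$ at the origin while $(1+w)^{-k}$ gives decay of order $k-\beta > 0$ at infinity, so $h \in \H_0^\infty(\Sigma_{2\psi,+})$. Since the map $z\mapsto z^2$ sends $\Sigma_\psi$ into $\Sigma_{2\psi,+}$, the composition $h(\z^2)$ lies in $\H_0^\infty(\Sigma_\psi)$.

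On the sectorial side, $B^2$ is sectorial of angle $2\varphi$, the resolvent $(1+B^2)^{-k}$ is a bounded bijection of $\HH$ onto $\dom(B^{2k})$ (in particular injective), and the function $(1+\z)^{-k}$ lies in the primary-calculus algebra $\mathcal{E}(\Sigma_{2\psi,+})$ with $(1+\z)^{-k}(B^2)=(1+B^2)^{-k}$. Consequently $(1+\z)^{-k}$ regularizes $\z^\beta$ in the sectorial calculus of $B^2$, and the defining formula of the extended calculus yields
\begin{align*}
(\z^\beta)(B^2) = (1+B^2)^k\, h(B^2).
\end{align*}

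On the bisectorial side, I propose to regularize the function $(\z^2)^\beta$ by $e(z) := (1+z^2)^{-k}$. The product $(ef)(z) = (z^2)^\beta(1+z^2)^{-k} = h(z^2)$ lies in $\H_0^\infty(\Sigma_\psi)$, so Lemma~\ref{Lem: Compatibility of calculi for sectorial and bisectorial operators} gives $(ef)(B) = h(B^2)$. What remains is to identify $e(B) = (1+B^2)^{-k}$, for then injectivity of $(1+B^2)^{-k}$ and the regularization formula yield
\begin{align*}
((\z^2)^\beta)(B) = e(B)^{-1}(ef)(B) = (1+B^2)^k\, h(B^2) = (\z^\beta)(B^2).
\end{align*}

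The technical heart is thus the identity $(1+\z^2)^{-k}(B) = (1+B^2)^{-k}$ within the primary bisectorial calculus. I would establish it by induction on $k$. For $k=1$, partial fractions give $(1+z^2)^{-1} = \tfrac{\i}{2}\bigl[(\i+\z)^{-1} - (-\i+\z)^{-1}\bigr]$, which places $(1+\z^2)^{-1}$ in $\mathcal{E}(\Sigma_\psi)$; applying the primary calculus termwise and invoking $(\i+B)(-\i+B) = 1+B^2$ via the resolvent identity yields $(1+\z^2)^{-1}(B) = (1+B^2)^{-1}$. For the inductive step I would use the decomposition $(1+z^2)^{-(k+1)} = (1+z^2)^{-k} - z^2(1+z^2)^{-(k+1)}$, whose second summand lies in $\H_0^\infty(\Sigma_\psi)$, together with the multiplicativity \eqref{Eq: Composition in Hinfty calculus} of the primary calculus. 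The main obstacle I foresee is precisely this bookkeeping: one must keep the primary-calculus algebra structure in sight and verify that the Cauchy-integral definitions can be combined with the sectorial resolvent identity for $B^2$; once this identification is in place, the rest of the proof is a direct appeal to Lemma~\ref{Lem: Compatibility of calculi for sectorial and bisectorial operators}.
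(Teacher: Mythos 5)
Your proposal is correct and follows essentially the same route as the paper: the paper's proof likewise regularizes $\z^\beta$ by $e=(\ind+\z)^{-k}$ in the sectorial calculus for $B^2$ and $(\z^2)^\beta$ by $e(\z^2)=(\ind+\z^2)^{-k}$ in the bisectorial calculus for $B$, identifying the regularized pieces through Lemma~\ref{Lem: Compatibility of calculi for sectorial and bisectorial operators}. The only difference is that the paper treats the identification $\bigl((\ind+\z^2)^{-k}\bigr)(B)=(1+B^2)^{-k}$ as routine, whereas you verify it explicitly by partial fractions and induction -- a correct, standard piece of bookkeeping.
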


\begin{proof}
Let $k \in \IN$ be larger than $\beta$. It suffices to remark that $e:=(1 +\z)^{-k}$ regularizes $\z^\beta$ in the functional calculus for $B^2$ and that $e(\z^2)$ regularizes $(\z^2)^\beta$ in the functional calculus for $B$.
\end{proof}

\subsection{Boundedness of the $\H^\infty$-calculus for bisectorial operators} 
\label{Subsec: Boundedness Hinfty calculus}

Given an injective bisectorial operator $B$  of angle $\varphi \in (0, \frac{\pi}{2})$ on a Hilbert space $\HH$ and some angle $\psi \in (\varphi, \frac{\pi}{2})$, the $\H^\infty(\S_\psi)$-calculus for $B$ is said to be \emph{bounded} with \emph{bound} $C_\psi > 0$ if  
\begin{align*}
 \|f(B)\|_{\Lop(\HH)} \leq C_\psi \|f\|_{\infty, \S_\psi} \qquad (f \in \H^\infty(\S_\psi)).
\end{align*}
It is convenient that boundedness of the $\H^\infty(\S_\psi)$-calculus follows from a uniform bound for the $\H_0^\infty(\S_\psi)$-calculus. Upon replacing sectors by bisectors and the regularizer $\z(1 +\z)^{-2}$ by $\z^2(1+\z^2)^{-2}$, the same argument as in \cite[Sec.\ ~5.3.4]{Haase} applies in the bisectorial case yielding

\begin{proposition}
\label{Prop: H0infty calculus bounds Hinfty calculus}
Let $B$ be an injective bisectorial operator of angle $\varphi \in (0, \frac{\pi}{2})$ on a Hilbert space $\HH$ and let $\psi \in (\varphi, \frac{\pi}{2})$. If there exists a constant $C_\psi > 0$ such that
\begin{align*}
 \|f(B)\|_{\Lop(\HH)} \leq C_\psi \|f\|_{\infty, \S_\psi} \qquad (f \in \H_0^\infty(\S_\psi)),
\end{align*}
then the $\H^\infty(\S_\psi)$-calculus for $B$ is bounded with bound $C_\psi$.
\end{proposition}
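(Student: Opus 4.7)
The plan is to mimic the sectorial argument of \cite[Sec.\ 5.3.4]{Haase} verbatim, with only the cosmetic changes needed to pass from the sector $\Sigma_{\psi,+}$ to the bisector $\Sigma_\psi$. First, I would fix a family of bisectorial cutoffs $\phi_n \in \H_0^\infty(\Sigma_\psi)$, $n \in \IN$, obtained from the sectorial approximation used in \cite{Haase} by substituting $\z \mapsto \z^2$; these satisfy $\sup_n \|\phi_n\|_{\infty, \Sigma_\psi} \leq K$ for some $K \geq 1$ independent of $n$ and converge locally uniformly to $1$ on $\Sigma_\psi$. For $f \in \H^\infty(\Sigma_\psi)$ the products $f_n := \phi_n f$ then lie in $\H_0^\infty(\Sigma_\psi)$, and the hypothesis of the proposition yields the uniform bound
\begin{align*}
 \|f_n(B)\|_{\Lop(\HH)} \leq C_\psi K \|f\|_{\infty, \Sigma_\psi}.
\end{align*}

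Second, I would establish the strong convergence $f_n(B) u \to f(B) u$ on a dense subspace of $\HH$. The natural candidate is $\Rg(e(B))$, where $e(z) := z^2 (1+z^2)^{-2}$ is the canonical bisectorial regulariser. For $u = e(B) v$ the composition rule \eqref{Eq: Composition in Hinfty calculus} identifies $f_n(B) u = (e f_n)(B) v$ and $f(B) u = (ef)(B) v$, and along the boundary $\partial \Sigma_\nu$ of an intermediate bisector ($\varphi < \nu < \psi$) the integrands $(e f_n)(z)(z-B)^{-1}$ are uniformly dominated by $K \|f\|_{\infty,\Sigma_\psi}\min\{|z|, |z|^{-3}\}$ after inserting the bisectorial resolvent estimate $\|(z-B)^{-1}\|_{\Lop(\HH)} \lesssim |z|^{-1}$. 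Dominated convergence on the Cauchy integrals therefore gives $f_n(B)u \to f(B)u$ in $\HH$, and the uniform bound of the preceding paragraph passes to $\|f(B)\|_{\Lop(\HH)} \leq C_\psi K \|f\|_{\infty, \Sigma_\psi}$ on this dense subspace, hence on all of $\HH$.

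The sharp constant $C_\psi$ is recovered by optimising the choice of $\phi_n$ so that $K$ may be taken arbitrarily close to $1$, or equivalently by invoking the bisectorial analogue of Haase's convergence lemma \cite[Prop.\ 5.1.4]{Haase}. The principal obstacle is the density of $\Rg(e(B))$ in $\HH$, which rests on the bisectorial analogue of the topological decomposition $\HH = \Ke(B) \oplus \cl{\Rg(B)}$ recalled in Section~\ref{Sec: Functional calculi}. Once this is in hand, everything else is a literal transcription of the sectorial argument with sectors replaced by bisectors and the regulariser $\z(\ind + \z)^{-2}$ replaced by $\z^2(\ind + \z^2)^{-2}$.
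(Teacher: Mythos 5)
Your argument is correct and is precisely the route the paper takes: it defers to \cite[Sec.\ 5.3.4]{Haase} with sectors replaced by bisectors and the regularizer $\z(\ind+\z)^{-2}$ replaced by $\z^2(\ind+\z^2)^{-2}$, which is exactly the approximation-plus-convergence argument you spell out (uniformly bounded $\H_0^\infty$ cutoffs, density of $\Rg(e(B))$ via $\HH=\Ke(B)\oplus\cl{\Rg(B)}$ and injectivity, dominated convergence in the Cauchy integral, and closedness of $f(B)$ to pass from the dense subspace to all of $\HH$). Your remark on recovering the exact bound $C_\psi$ (taking roots of the cutoffs so that their sup-norms tend to $1$, i.e.\ the convergence-lemma refinement) is the only point stated informally, but it is standard and works as indicated.
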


On Hilbert spaces boundedness of the $\H^\infty$-calculus is equivalent to certain \emph{quadratic estimates}, see e.g.\ \cite{CowlingDustMcIntoshYagi} for the sectorial case. Likewise, in the bisectorial case the following holds.

\begin{proposition}
\label{Prop: Quadratic estimate implies bounded Hinfty calculus}
Let $B$ be an injective bisectorial operator of angle $\varphi \in (0, \frac{\pi}{2})$ on a Hilbert space $\HH$. If $B$ satisfies quadratic estimates
\begin{align*}
 \int_0^\infty \|t B(1+ t^2 B^2)^{-1}u \|_{\HH}^2 \; \frac{\d t}{t} \simeq \|u\|_{\HH}^2 \qquad (u \in \HH),
\end{align*}
then the $\H^\infty(\S_\psi)$-calculus for $B$ is bounded for each $\psi \in (\varphi, \frac{\pi}{2})$.
\end{proposition}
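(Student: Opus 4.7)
By Proposition~\ref{Prop: H0infty calculus bounds Hinfty calculus} it suffices to prove $\|f(B)\|_{\Lop(\HH)} \leq C_\psi \|f\|_{\infty, \Sigma_\psi}$ uniformly in $f \in \H_0^\infty(\Sigma_\psi)$. Fix such an $f$ and $u \in \HH$ and abbreviate $\psi_t(z) := tz(1+t^2 z^2)^{-1}$. My plan is to apply the \emph{lower bound} direction $\|w\|_\HH^2 \lesssim \int_0^\infty \|\psi_t(B) w\|_\HH^2 \, \d t/t$ of the hypothesised quadratic estimate to $w := f(B) u \in \HH$. Combined with the composition rule~\eqref{Eq: Composition in Hinfty calculus}, which forces $\psi_t(B) f(B) = (f\psi_t)(B)$ as bounded operators on $\HH$, this reduces the target to the weighted quadratic estimate
\[
\int_0^\infty \|(f\psi_t)(B) u\|_\HH^2 \, \frac{\d t}{t} \;\lesssim\; \|f\|_{\infty, \Sigma_\psi}^2 \|u\|_\HH^2.
\]

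For the weighted estimate I would invoke the Calderón reproducing formula. A direct computation gives $\int_0^\infty \psi_t(z)^2 \, \d t/t = \tfrac{1}{2}$ pointwise on $\Sigma_\psi$, and the convergence lemma of the holomorphic functional calculus transfers this to the operator identity $u = 2\int_0^\infty \psi_s(B)^2 u \, \d s / s$ on $\cl{\Rg(B)}$; this extends to all of $\HH$ via injectivity of $B$ and the decomposition $\HH = \Ke(B) \oplus \cl{\Rg(B)}$. Substituting yields
\[
(f\psi_t)(B) u \;=\; 2 \int_0^\infty (f\psi_t\psi_s)(B) \, \psi_s(B) u \, \frac{\d s}{s},
\]
and with $w_s := \psi_s(B) u$ and $K(t,s) := \|(f\psi_t\psi_s)(B)\|_{\Lop(\HH)}$ Minkowski's inequality reduces the desired estimate to a norm bound for the scalar integral operator with kernel $K$ mapping $\L^2(\d s/s)$ to $\L^2(\d t/t)$. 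By Schur's lemma this in turn reduces to $\sup_t \int_0^\infty K(t,s) \, \d s/s + \sup_s \int_0^\infty K(t,s) \, \d t/t \lesssim \|f\|_{\infty, \Sigma_\psi}$, because the leftover factor $\int_0^\infty \|w_s\|_\HH^2 \, \d s/s \lesssim \|u\|_\HH^2$ is precisely the \emph{upper bound} direction of the hypothesis.

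The main obstacle is the kernel estimate. My plan is to estimate $K(t,s)$ by feeding the defining Cauchy integral for $(f\psi_t\psi_s)(B)$ into the bisectorial resolvent bound $\|(z - B)^{-1}\|_{\Lop(\HH)} \lesssim |z|^{-1}$ on $\bd \Sigma_\nu$ for some $\nu \in (\varphi, \psi)$, which leads to
\[
K(t,s) \;\lesssim\; \|f\|_{\infty, \Sigma_\psi} \int_0^\infty \frac{t r}{1+t^2 r^2} \cdot \frac{s r}{1+s^2 r^2} \, \frac{\d r}{r}.
\]
An elementary computation shows that this integral depends only on $\lambda := \max(s,t)/\min(s,t)$ and decays like $\lambda^{-1} \log \lambda$ as $\lambda \to \infty$, hence is integrable against $\d \lambda/\lambda$ on $(1, \infty)$, so Schur's hypothesis will be verified. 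Combining everything gives $\|f(B) u\|_\HH \lesssim \|f\|_{\infty, \Sigma_\psi} \|u\|_\HH$ and closes the proof. It is worth noting that this argument bypasses any passage to the adjoint $B^*$.
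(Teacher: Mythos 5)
Your argument is correct and is essentially the paper's own proof: reduce to $\H_0^\infty$ via Proposition~\ref{Prop: H0infty calculus bounds Hinfty calculus}, apply the lower quadratic estimate to $f(B)u$, insert the Calder\'on reproducing formula (Lemma~\ref{Lem: Resolution of the identity}) for $u$, bound the resulting kernel $\|(f\psi_t\psi_s)(B)\|_{\Lop(\HH)}\lesssim \|f\|_{\infty,\Sigma_\psi}\,\zeta(ts^{-1})$ by a direct Cauchy-integral estimate, and finish with a Schur-type argument together with the upper quadratic estimate. The only differences are cosmetic (Minkowski plus Schur's lemma instead of Cauchy--Schwarz with the split of $\zeta$, and writing $\Psi_t(B)f(B)\Psi_s(B)$ as $(f\psi_t\psi_s)(B)$).
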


For later references we include a proof drawing upon the following lemma.

\begin{lemma}[{\cite[p.\ 473]{AKM-QuadraticEstimates}}]
 \label{Lem: Resolution of the identity}
If $B$ is a bisectorial operator on a Hilbert space $\HH$, then
\begin{align*}
 \lim_{\substack{r \to 0 \\ R \to \infty}} \int_r^R (t B(1+ t^2 B^2)^{-1})^2 u \; \frac{\d t}{t} = \frac{1}{2} u \qquad (u \in \overline{\Rg(B)}).
\end{align*}
\end{lemma}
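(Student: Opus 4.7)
The plan is to reduce the integral to an explicit resolvent expression via the functional calculus and then pass to the limit. First I would observe that with $\psi_t(z) := tz(1+t^2 z^2)^{-1}$ the scalar integrand $\psi_t(z)^2/t$ admits an elementary antiderivative in $t$: the substitution $s = 1 + t^2 z^2$ yields
\begin{equation*}
\eta_{r,R}(z) := \int_r^R \frac{t z^2}{(1 + t^2 z^2)^2}\, \d t = \frac{1}{2}\left(\frac{1}{1 + r^2 z^2} - \frac{1}{1 + R^2 z^2}\right).
\end{equation*}
For fixed $0 < r < R < \infty$ and any $\psi \in (\varphi, \frac{\pi}{2})$ this $\eta_{r,R}$ is bounded and holomorphic on $\Sigma_\psi$ with the required decay at $0$ and $\infty$, so it lies in $\H_0^\infty(\Sigma_\psi)$.

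Next I would lift this identity to the operator level. The map $t \mapsto \psi_t(B)^2 / t$ is norm-continuous on $[r,R]$, and each $\psi_t(B)^2$ is given by a Cauchy integral over $\bd \Sigma_{\nu,+} \cup \bd(-\Sigma_{\nu,+})$ with a kernel uniformly integrable in $(t,z)$. A Fubini argument interchanging the $t$-integral with the Cauchy integral identifies the Bochner integral $\int_r^R \psi_t(B)^2\, \d t/t$ with $\eta_{r,R}(B)$ defined via the bisectorial $\H_0^\infty$ calculus; thus
\begin{equation*}
\int_r^R \bigl(tB(1 + t^2 B^2)^{-1}\bigr)^2 u\, \frac{\d t}{t} = \tfrac{1}{2}(1 + r^2 B^2)^{-1} u - \tfrac{1}{2}(1 + R^2 B^2)^{-1} u.
\end{equation*}

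It then remains to pass to the limit in the two resolvent terms. Bisectoriality of $B$ applied at the points $\pm \i/t$ (which lie off $\cl{\Sigma_\psi}$ since $\psi < \pi/2$) gives the uniform bound $\sup_{t > 0}\|(1 + t^2 B^2)^{-1}\|_{\Lop(\HH)} < \infty$. For $r \to 0$, the identity $(1 + r^2 B^2)^{-1} u - u = -r^2 B^2 (1 + r^2 B^2)^{-1} u$ on $\dom(B^2)$ shows strong convergence to $u$ on this dense subspace, and uniform boundedness promotes it to all of $\HH$. For $R \to \infty$, I would factor $B(1 + R^2 B^2)^{-1} = R^{-1} \cdot RB(1 + R^2 B^2)^{-1}$; the second factor is uniformly bounded by the $\H_0^\infty$ calculus, so the product is $O(R^{-1})$ in operator norm. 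Hence $(1 + R^2 B^2)^{-1} B v = B(1 + R^2 B^2)^{-1} v \to 0$ for every $v \in \dom(B)$, i.e.\ on all of $\Rg(B)$, and again uniform boundedness extends this to $u \in \cl{\Rg(B)}$. Adding the two contributions yields the claimed value $\tfrac{1}{2} u$.

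The main conceptual point is the Fubini step, which requires verifying that the Cauchy kernel representing $\psi_t(B)^2$ is integrable against $\d t/t$ on $[r,R] \times \bd \Sigma_\nu$; this is routine once one observes the factor $t z^2 /(1 + t^2 z^2)^2$ is dominated uniformly on compacts of $t$ by $\min\{\abs{z}^2, \abs{z}^{-2}\}$ times an integrable function of $t$. Everything else is bookkeeping with the standard bisectorial calculus summarised in Section~\ref{Sec: Functional calculi}.
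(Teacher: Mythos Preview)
Your argument is correct. The paper does not supply its own proof of this lemma; it merely cites \cite[p.~473]{AKM-QuadraticEstimates}. Your route---computing the scalar antiderivative $\eta_{r,R}$, lifting it to the operator level via Fubini in the Cauchy integral, identifying the result with $\tfrac{1}{2}[(1+r^2B^2)^{-1}-(1+R^2B^2)^{-1}]$, and then passing to the limit on dense subsets using the uniform resolvent bounds from bisectoriality---is precisely the standard mechanism behind such Calder\'on-type reproducing formulas and is what underlies the cited reference.

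One cosmetic remark: the uniform boundedness of $RB(1+R^2B^2)^{-1}$ does not require invoking the $\H_0^\infty$ calculus; it follows already from the algebraic identity $RB(1+R^2B^2)^{-1}=\tfrac{1}{2\i}\big[(1-\i RB)^{-1}-(1+\i RB)^{-1}\big]$ together with bisectoriality (this is exactly $Q_R^B$ in the paper's notation, cf.\ Lemma~\ref{Lem: Basic properties of resolvent operators}). Likewise, for the $r\to 0$ limit it suffices to work on $\dom(B)$ rather than $\dom(B^2)$, writing $(1+r^2B^2)^{-1}u-u=-r\cdot rB(1+r^2B^2)^{-1}\cdot Bu$; this sidesteps any separate verification that $\dom(B^2)$ is dense.
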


\begin{proof}[Proof of Proposition~\ref{Prop: Quadratic estimate implies bounded Hinfty calculus}]
We appeal to Proposition~\ref{Prop: H0infty calculus bounds Hinfty calculus}. Fix $\psi \in (\varphi, \frac{\pi}{2})$ and $f \in \H_0^{\infty} (\S_{\psi})$. For $t>0$ put $\Psi_t := t \z (1 + t^2 \z^2)^{-1} \in \H_0^\infty(\S_\psi)$. The most direct estimate on the defining Cauchy integral gives
\begin{align}
\label{Ineq: Auxiliary estimate for Schur's estimate}
\begin{split}
 \| \Psi_t(B) f(B) \Psi_s(B)\|_{\Lop(\HH)} 
&\lesssim \| f \|_{\infty,\S_\psi} \int_0^{\infty} \frac{t s^{-1} r}{( 1 + (ts^{-1} r)^2)( 1 + r^2 )} \; \d r =: \| f \|_{\infty,\S_\psi} \zeta(ts^{-1}) 
\end{split}
\end{align}
for all $s,t > 0$ and an implicit constant depending only on $\psi$. Here, $\zeta \in \L^1 (0, \infty; \d r / r)$. Recall $\HH = \Ke(B) \oplus \cl{\Rg(B)} = \cl{\Rg(B)}$ as $B$ is injective. For $u \in \HH$ apply the quadratic estimate to $f(B)u$ and then use Lemma~\ref{Lem: Resolution of the identity} for $u$ to find
\begin{align*}
\| f(B) u \|_{\HH}^2 
&\lesssim \int_0^{\infty} \| \Psi_t(B)f(B) u \|_{\HH}^2 \; \frac{\d t}{t}
\lesssim \int_0^{\infty} \bigg( \int_0^{\infty} \| \Psi_t(B) f(B) \Psi_s(B) \Psi_s(B) u \|_{\HH} \; \frac{\d s}{s} \bigg)^2 \; \frac{\d t}{t}. 
\intertext{By \eqref{Ineq: Auxiliary estimate for Schur's estimate} and H\"older's inequality,}
&\lesssim \| f \|_{\infty,\S_\psi}^2 \int_0^{\infty} \bigg( \int_0^{\infty} \zeta(ts^{-1}) \; \frac{\d s}{s} \bigg) \bigg( \int_0^{\infty} \zeta(ts^{-1}) \| \Psi_s(B) u \|_{\HH}^2 \; \frac{\d s}{s} \bigg) \; \frac{\d t}{t}.
\end{align*}
The right-hand side is bounded by $\| f \|_{\infty, \S_\psi}^2 \|\zeta\|_{\L^1(0,\infty; \d r / r)}^2 \| u \|_{\HH}^2$.
\end{proof}

\begin{remark}
\label{Rem: Self-adjoint operators have quadratic estimates}
Suppose that $B$ is a self-adjoint (and hence bisectorial) operator on a Hilbert space $\HH$. Then $\Psi_t(B)= t B(1+ t^2 B^2)^{-1}$ is self-adjoint for each $t>0$ and Lemma \ref{Lem: Resolution of the identity} yields
\begin{align*}
 \int_0^{\infty} \|\Psi_t(B) u\|_{\HH}^2 \; \frac{\d t}{t} = \lim_{\substack{r \to 0 \\ R \to \infty}} \Big\langle \int_r^R \Psi_t(B)^2 u \; \frac{\d t}{t} , u \Big\rangle_{\HH} = \frac{1}{2} \|u\|_{\HH}^2 \qquad(u \in \cl{\Rg(B)}).
\end{align*}
The proof of Proposition~\ref{Prop: Quadratic estimate implies bounded Hinfty calculus} then reveals the following: If $\{T_t\}_{t >0} \subseteq \Lop(\HH)$ is a family of operators for which there is $\zeta \in \L^1(0, \infty; \d r/r)$ such that $\| T_t \Psi_s(B)\|_{\Lop(\HH)} \lesssim \zeta(t s^{-1})$ for all $s,t > 0$, then
\begin{align*}
 \int_0^\infty \|T_t u\|_{\HH}^2 \; \frac{\d t}{t} \lesssim \|u\|_{\HH}^2 \qquad (u \in \cl{\Rg(B)}).
\end{align*}
This is usually called a \emph{Schur type estimate}. In the proof of Proposition~\ref{Prop: Quadratic estimate implies bounded Hinfty calculus}, $T_t = \Psi_t(B) f(B)$.
\end{remark}

For completeness we add a short proof of Corollary~\ref{Cor: Abstract Kato problem}.

\begin{proof}[Proof of Corollary~\ref{Cor: Abstract Kato problem}]
The first part of the corollary is due to $\HH = \Ke(\pb) \oplus \cl{\Rg(\pb)}$ and Proposition~\ref{Prop: Quadratic estimate implies bounded Hinfty calculus}. Put $T:= \pb|_{\cl{\Rg(\pb)}}$. As $\frac{\z}{\sqrt{\z^2}}, \frac{\sqrt{\z^2}}{\z} \in \H^\infty(\S_\psi)$, the composition rules \eqref{Eq: Composition in Hinfty calculus} yield
\begin{align*}
 \dom({\textstyle \sqrt{\pb^2}}) \cap \cl{\Rg(\pb)} =  \dom(\pb) \cap \cl{\Rg(\pb)} \quad \text{with} \quad \|{\textstyle \sqrt{\pb^2}} u\|_\HH \simeq \|\pb u\|_\HH \qquad (u \in \dom(\pb) \cap \cl{\Rg(\pb)}).
\end{align*}
Here, we used $\dom(T) = \dom(\pb) \cap \cl{\Rg(\pb)}$ and $\dom(\sqrt{T^2}) = \dom(\sqrt{\pb^2}) \cap \cl{\Rg(\pb)}$, the latter following as in \cite[Prop.\ 2.6.5]{Haase}. The Kato square root type estimate follows from $\Ke(\pb) \subseteq \Ke(\sqrt{\pb^2})$, which can be deduced exactly as in \cite[Thm.\ 2.3.3c]{Haase}. 
\end{proof}

\section{The hypotheses underlying Theorem~\ref{Thm: The pibe theorem}}
\label{Sec: The abstract framework}

In this section we introduce the hypotheses \eqref{H1} - \eqref{H7} underlying Theorem~\ref{Thm: The pibe theorem} and summarize their well-established operator theoretic consequences. The first four of our hypotheses are:

\begin{enumerate}

\labitem{H1}{H1}{The operator $\g$ is \emph{nilpotent}, i.e.\ closed, densely defined, and satisfies $\Rg(\g) \subseteq \Ke(\g)$. In particular $\g^2 = 0$ on $\dom(\g)$.}

\labitem{H2}{H2}
{The operators $B_1$ and $B_2$ are defined everywhere on $\HH$. There exist $\kappa_1 , \kappa_2 > 0$ such that they satisfy the \emph{accretivity conditions}
\begin{align*}
\Re \langle B_1 u , u \rangle_\HH &\geq \kappa_1 \| u \|_{\HH}^2 \qquad (u \in \Rg(\g^*)), \\
\Re \langle B_2 u , u \rangle_\HH &\geq \kappa_2 \| u \|_{\HH}^2 \qquad (u \in \Rg(\g)).
\end{align*}
and there exist $K_1, K_2$ such that they satisfy the \emph{boundedness conditions}
\begin{align*}
 \|B_1 u\|_\HH \leq K_1 \|u\|_\HH \quad \text{and} \quad \|B_2 u\|_\HH \leq K_2 \|u\|_\HH \qquad (u \in \HH).
\end{align*}}

\labitem{H3}{H3}{The operator $B_2 B_1$ maps $\Rg(\g^*)$ into $\Ke(\g^*)$ and the operator $B_1 B_2$ maps $\Rg(\g)$ into $\Ke(\g)$. In particular, $\g^* B_2 B_1 \g^* = 0$ on $\dom(\g^*)$ and $\g B_1 B_2 \g = 0$ on $\dom(\g)$.}

\labitem{H4}{H4}{The operators $B_1, B_2$ are multiplication operators induced by  $\L^{\infty}(\Omega ; \Lop(\IC^N))$-functions.}
\end{enumerate}

We define the \emph{Dirac type operator} $\Pi:= \g + \g^*$ and the perturbed operators $\gbs := B_1 \g^* B_2$ and $\pb := \g + \gbs$. The first three hypotheses trace out the classical setup for perturbed Dirac type operators introduced in \cite{AKM}. They have the following consequences. Firstly, \eqref{H1} implies that $\g^*$ is nilpotent and so is $\gb^*$, cf.\ \cite[Lem.\ 4.1]{AKM-QuadraticEstimates}. The operator $\pb$ induces the algebraic and topological \emph{Hodge decomposition}
\begin{align}
\label{Eq: Hodge Decomposition}
\HH = \Ke(\pb) \oplus \overline{\Rg(\gbs)} \oplus \overline{\Rg(\g)}
\end{align}
and in particular
\begin{align}
\label{Eq: Kernel and range decompositions}
 \Ke(\pb) = \Ke(\gbs) \cap \Ke(\g) \quad \text{and} \quad \overline{\Rg(\pb)} = \overline{\Rg(\gbs)} \oplus \overline{\Rg(\g)}
\end{align}
hold \cite[Prop.\ 2.2]{AKM-QuadraticEstimates}. Moreover, $\pb$ is bisectorial of some angle $\omega \in (0, \frac{\pi}{2})$, cf.\ \cite[Prop. 2.5]{AKM-QuadraticEstimates}. Consequently, $\pb^2$ is sectorial of angle $2 \omega$. The unperturbed operator $\Pi$ is self-adjoint \cite[Cor.\ ~4.3]{AKM-QuadraticEstimates} and thus satisfies quadratic estimates, cf.\ Remark~\ref{Rem: Self-adjoint operators have quadratic estimates}. In particular, $\dom(\sqrt{\Pi^2}) = \dom(\Pi)$ with equivalence of the homogeneous graph norms as in Corollary~\ref{Cor: Abstract Kato problem}. Finally, if $\g$ satisfies \eqref{H1}, then \eqref{H2} and \eqref{H3} are always satisfied for $B_1 = B_2 = \Id$ and hence the results above remain true in the \emph{unperturbed setting} when $\gb^* = \g^*$ and $\pb = \Pi$. 

\begin{remark}
\label{Rem: Uniformity of constants}
In all results from \cite{AKM-QuadraticEstimates} implicit constants depend on the perturbations $B_1$ and $B_2$ only through the constants $\kappa_{1,2}, K_{1,2}$ quantified in \eqref{H2}. This has already been stated in \cite[Sec.~2]{AKM-QuadraticEstimates} and has been worked out in greatest details in the master's thesis of one of the authors \cite{Patrick-Masterarbeit}.
\end{remark}

Similar to \cite{AKM-QuadraticEstimates, AKM} the set of hypotheses is completed by localization and coercivity assumptions on the unperturbed operators. The slight difference between \eqref{H7} and the corresponding hypothesis in \cite{AKM} stresses that no further knowledge on the occurring interpolation spaces is necessary.

\begin{enumerate}

 \labitem{H5}{H5}{For every $\varphi \in \C_c^{\infty} (\IR^d ; \IC)$ the associated multiplication operator $M_\varphi$ maps $\dom(\g)$ into itself and the commutator $[\g , M_{\varphi} ] = \g M_{\varphi} - M_{\varphi} \g$ with domain $\dom([\g , M_{\varphi}]) = \dom(\g)$ acts as a multiplication operator induced by some $c_\varphi \in \L^\infty(\Omega; \Lop(\IC^N))$ with entries
\begin{align*}
\lvert c_{\varphi}^{i,j} (x) \rvert \lesssim \lvert \nabla \varphi(x) \rvert \qquad (x \in \Omega, \, 1 \leq i, j \leq N)
\end{align*} for an implicit constant independent of $\varphi$.}

 \labitem{H6}{H6} {For every open ball $B$ centered in $\Omega$, and for all $u \in \dom(\g)$ and $v \in \dom(\g^*)$ with compact support in $B \cap \Omega$ it holds
\begin{align*}
\bigg\lvert \int_{\Omega} \g u \; \d x  \bigg\rvert \lesssim \lvert B \rvert^{\frac{1}{2}} \| u \|_\HH \qquad \text{and} \qquad
\bigg\lvert \int_{\Omega} \g^* v \; \d x \bigg\rvert \lesssim \lvert B \rvert^{\frac{1}{2}} \| v \|_\HH.
\end{align*}}

 \labitem{H7}{H7}{There exist $\beta_1 , \beta_2 \in (0 , 1]$ such that the fractional powers of $\Pi^2$ satisfy
\begin{align*}
\| u \|_{[\HH , \V^k]_{\beta_1}} \lesssim \| ( \Pi^2 )^{\beta_1 / 2} u \|_\HH \qquad \text{and} \qquad \| v \|_{[\HH , \V^k]_{\beta_2}} \lesssim \| ( \Pi^2 )^{\beta_2 / 2} v \|_\HH
\end{align*}
for all $u \in \Rg(\g^*) \cap \dom(\Pi^2)$ and all $v \in \Rg(\g) \cap \dom(\Pi^2)$.
}
\end{enumerate}

\begin{remark}
\label{Rem: Hypotheses remain valid under permutation}
It is straightforward to check that if the triple of operators $\{\g , B_1 , B_2\}$ satisfies \eqref{H1} - \eqref{H7}, then so do the triples $\{\g^* , B_2 , B_1\} , \{\g^* , B_2^* , B_1^*\}$, and $\{\g , B_1^* , B_2^*\}$.
\end{remark}

\section{The proof of Theorem~\ref{Thm: Main Theorem}}
\label{Sec: Proof of main result}

\noindent In this section we deduce Theorem~\ref{Thm: Main Theorem} from Theorem~\ref{Thm: The pibe theorem} applied on $\HH := \L^2 (\Omega ; \IC^m) \times \L^2(\Omega; \IC^m) \times (\L^2 (\Omega ; \IC^m))^d$. The argument is similar to \cite{AKM}. 

Recall that $\a: \V \times \V \to \IC$ is the sesquilinear form corresponding to $Au = -\sum_{\alpha, \beta = 1}^d \partial_\alpha(a_{\alpha, \beta} \partial_{\beta} u)$ and let $\mathfrak{A}$ be the multiplication operator corresponding to the coefficient tensor $(a_{\alpha,\beta})_{1\leq \alpha,\beta \leq d} \in \L^\infty(\Omega; \Lop(\IC^{dm}))$. Define $\nabla_\V u:= \nabla u$ on $\dom(\nabla_\V):= \V$ and put
\begin{align*}
 \g:= \begin{bmatrix} 0 & 0 & 0 \\ \Id & 0 & 0 \\ \nabla_\V & 0 & 0 \end{bmatrix}, \quad
 B_1:= \begin{bmatrix} \Id & 0 & 0 \\ 0 & 0 &0 \\ 0 & 0 & 0\end{bmatrix}, \quad \text{and} \quad 
 B_2:= \begin{bmatrix} 0 & 0 & 0 \\ 0 & \Id & 0 \\ 0 & 0 & \mathfrak{A}  \end{bmatrix}
\end{align*}
on their natural domains. By these choices 
\begin{align*}
 \pb =  \begin{bmatrix} 0 & \Id & (\nabla_\V)^* \mathfrak{A} \\ \Id & 0 & 0 \\ \nabla_\V & 0 & 0 \end{bmatrix} \quad \text{and} \quad 
 \pb^2 = \begin{bmatrix} 1 + A & 0 & 0 \\ 0 & \Id & (\nabla_\V)^* \mathfrak{A} \\ 0 & \nabla_\V & \nabla_\V (\nabla_\V)^* \mathfrak{A} \end{bmatrix}.
\end{align*}
The corresponding unperturbed operators $\Pi$ and $\Pi^2$ are obtained by replacing $\mathfrak{A}$ by $\Id$ and $A$ by $-\Delta_\V$. Upon restricting to the first component of $\HH$, these representations show that Theorem~\ref{Thm: Main Theorem} follows from $\dom(\sqrt{\pb^2}) = \dom(\pb)$ with equivalences of the homogeneous graph norms, cf.\ Corollary~\ref{Cor: Abstract Kato problem}. So, to complete the proof of Theorem~\ref{Thm: Main Theorem} it remains to verify \eqref{H1} - \eqref{H7} for these particular choices of operators.

\subsection{Verification of \eqref{H1} - \eqref{H7}}
It is obvious that \eqref{H1}, \eqref{H3}, and \eqref{H4} hold. Also \eqref{H2} is immediate for $B_1$ and for $B_2$ it follows from Assumption~\ref{Ass: Ellipticity}. The validity of \eqref{H5} is a consequence of \eqref{V} in Assumption~\ref{Ass: Geometric setup} and the product rule. 

Since the integral over the gradient of a compactly supported function vanishes, the estimate for $u$ in \eqref{H6} follows from H\"older's inequality. For $v$ take $\varphi \in \C_c^{\infty} (\Omega ; \IR)$ with $\varphi \equiv 1$ on $\supp(v)$ and denote by $\{e_j\}_{j = 1}^{(d+2)m}$ the standard basis of $\IC^{(d+2)m}$. Note $\supp(\g^* v) \subseteq \supp(v)$ by \eqref{H5} for $\g^*$ in place of $\g$, cf.\ Remark~\ref{Rem: Hypotheses remain valid under permutation}. As
\begin{align*}
 \varphi e_j \in \H_0^1(\Omega; \IC^m)^{d+2} \subseteq \V^{d+2} \subseteq \dom(\g)
\end{align*}
for each $j$ by Assumption~\ref{Ass: Geometric setup}, it follows
 \begin{align*}
 \bigg \lvert \int_{\Omega} \g^*v \; \d x \bigg\rvert 
 \simeq \sum_{j = 1}^{(d+2)m} \bigg\lvert \int_{\Omega} \langle \varphi e_j , \g^* v \rangle \; \d x \bigg\rvert
 = \sum_{j = 1}^{(d+2)m} \bigg\lvert \int_{\Omega} \langle \g (\varphi  e_j) , v \rangle \; \d x\bigg\rvert.
 \end{align*}
Since $\abs{\g (\varphi  e_j)} \leq 1$ a.e.\ on $\supp(v)$, the required estimate is obtained by H\"older's inequality. 

For the first part of \eqref{H7} take $\beta_1 = 1$ and note
\begin{align*}
 \|u\|_{\V^{d+2}} =  \|u\|_{\H^1(\Omega; \IC^m)^{d+2}} = \|\g u\|_{\HH} = \|\Pi u\|_{\HH} \simeq \|{\textstyle \sqrt{\Pi^2}} u \|_{\HH}.
\end{align*}
For the second part take $\beta_2 = \alpha$ as in Assumption~\ref{Ass: Geometric setup}. Fix $v = (0, w, \nabla_\V w) \in \Rg(\g) \cap \dom(\Pi^2)$. Then $w \in \dom((\nabla_\V)^* \nabla_\V) = \dom(1 - \Delta_\V)$ so that by Assumption \eqref{Emb: Embedding condition} of Theorem~\ref{Thm: Main Theorem},
\begin{align*}
 \|v\|_{[\HH , \V^{d + 2}]_{\alpha}} 
&\simeq \|w\|_{\H^{\alpha,2}(\Omega; \IC^m)} +  \|\nabla w\|_{\H^{\alpha,2}(\Omega; \IC^m)^d} \\
&\lesssim \|w\|_{\H^{1+\alpha,2}(\Omega; \IC^m)} \\
&\lesssim \|(1 - \Delta_\V)^{1 / 2 + \alpha / 2} w\|_{\L^2(\Omega ; \IC^m)}.
\end{align*}
However, $(1 - \Delta_\V)^{1 / 2 + \alpha / 2} w = (\Pi^2)^{1/2 + \alpha/2}\widehat{w}$, where $\widehat{w} = (w,0,0) \in \dom(\Pi)$. Thus, Corollary~\ref{Cor: Square root compatibilty} and the composition rules \eqref{Eq: Composition in Hinfty calculus} for the functional calculus for $\Pi$ yield
\begin{align*}
\|(\Pi^2)^{1 / 2}(\Pi^2)^{\alpha / 2} \widehat{w}\|_{\HH} \simeq \| \Pi (\Pi^2)^{\alpha / 2} \widehat{w}\|_{\HH} = \| (\Pi^2)^{\alpha / 2} \Pi \widehat{w}\|_{\HH} = \|(\Pi^2)^{\alpha/2} v\|_{\HH}
\end{align*}
as required.

\section{The Proof of Theorem~\ref{Thm: The pibe theorem}: Preliminaries}
\label{Sec: Reduction to finite time}

\noindent In this and the following two sections we develop the proof of Theorem~\ref{Thm: The pibe theorem}. Throughout we assume that $\g$, $B_1$, and $B_2$ are operators on $\HH$ satisfying \eqref{H1} - \eqref{H7}. We shall stick to the notions introduced in Section~\ref{Sec: The abstract framework} but simply write $\| \cdot \|$ instead of $\| \cdot \|_{\HH}$ as long as no misunderstandings are expected. We shall use the discussed properties of $\g$, $\g^*$, $\Pi$, $\gb^*$, and $\pb^*$ without further referencing. We also introduce the following bounded operators on $\HH$:
\begin{align*}
 R_t^B &:= ( 1 + \i t \pb )^{-1}, \quad P_t^B := ( 1 + t^2 \pb^2 )^{-1}, \quad Q_t^B = t\pb P_t^B, \quad \text{and} \quad \Theta_t^B:= t \gbs P_t^B \qquad (t \in \IR).
\end{align*}
In the unperturbed case, i.e.\ if $B_1 = B_2 = \Id$, we simply write $R_t, P_t$, $Q_t$, and $\Theta_t$.

In order to carry out correctly the dependence of the implicit constants on the perturbations $B_1$ and $B_2$, we make the following
\begin{agreement}
\label{Agr: Implicit constants}
In the proof of Theorem~\ref{Thm: The pibe theorem} the symbols $\lesssim$, $\gtrsim$, and $\simeq$ are reserved for estimates invoking implicit constants whose dependence on $B_1$ and $B_2$ is only through the constants quantified in \eqref{H2}.
\end{agreement}

\begin{lemma}
\label{Lem: Basic properties of resolvent operators}
For each $t \in \IR$ it holds $P_t^B = \frac{1}{2} ( R_t^B + R_{-t}^B ) = R_t^B R_{-t}^B$ and $Q_t^B = \frac{1}{2\i} ( R_{-t}^B - R_t^B )$. Moreover,
\begin{align*}
 \|R_t^B\|_{\Lop(\HH)} + \|P_t^B\|_{\Lop(\HH)} + \|Q_t^B\|_{\Lop(\HH)} + \|\Theta_t^B\|_{\Lop(\HH)} \lesssim 1 \qquad (t \in \IR).
\end{align*}
\end{lemma}

\begin{proof}
Checking the identities is a straightforward calculation. The boundedness of $\{R_t^B\}_{t \in \IR}$, $\{P_t^B\}_{t \in \IR}$, and $\{Q_t^B\}_{t \in \IR}$ then follows by bisectoriality of $\pb$. Finally, $\|\Theta_t^B\|_{\Lop(\HH)} \lesssim \|Q_t^B\|_{\Lop(\HH)}$ holds for all $t \in \IR$ due to the topological decomposition $\cl{\Rg(\pb)} = \cl{\Rg(\gbs)} \oplus \cl{\Rg(\g)}$, cf.\ ~\eqref{Eq: Kernel and range decompositions}.
\end{proof}

In \cite[Prop.\ 4.8]{AKM-QuadraticEstimates} Axelsson, Keith, and M\textsuperscript{c}Intosh reveal that \eqref{H1}-\eqref{H3} already imply
\begin{align}
\label{Eq: Abstract setting quadratic estimate}
 \int_0^\infty \|\Theta_t^B (1-P_t)u\|^2 \; \frac{\d t}{t} \lesssim \|u\|^2 \qquad (u \in \Rg(\g)),
\end{align}
and that a sufficient condition for the quadratic estimate \eqref{Eq: Quadratic estimate for pb} for $\pb$ is
\begin{align}
\label{Eq: The quadratic estimate}
\int_0^{\infty} \| \Theta_t^B P_t u \|^2 \; \frac{\d t}{t} \lesssim \|u\|^2 \qquad (u \in \Rg(\g))
\end{align}
and the three analogous estimates obtained by replacing $\{\g , B_1 , B_2\}$ by $\{\g^* , B_2 , B_1\} , \{\g^* , B_2^* , B_1^*\}$, and $\{\g , B_1^* , B_2^*\}$. In fact, owing to Remark~\ref{Rem: Hypotheses remain valid under permutation}, it suffices to prove \eqref{Eq: The quadratic estimate}. In this section we shall take care of the integral over $t \geq 1$ and decompose the remaining finite time integral into three pieces that will be handled later on.

\begin{lemma}[Reduction to finite time]
\label{Lem: Reduction to finite time}
It holds
\begin{align*}
\int_1^{\infty} \| \Theta_t^B P_t u \|^2 \; \frac{\d t}{t} \lesssim \|u\|^2 \qquad (u \in \Rg(\g)).
\end{align*}
\end{lemma}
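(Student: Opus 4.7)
The plan is to split
\[
\Theta_t^B P_t u \;=\; \Theta_t^B u \;-\; \Theta_t^B(I-P_t)u
\]
so that the tail estimate decomposes as
\[
\int_1^{\infty}\|\Theta_t^B P_t u\|^2\,\tfrac{\d t}{t} \;\leq\; 2\int_1^{\infty}\|\Theta_t^B u\|^2\,\tfrac{\d t}{t} \;+\; 2\int_0^{\infty}\|\Theta_t^B(I-P_t)u\|^2\,\tfrac{\d t}{t}.
\]
The second integral is at most $C\|u\|^2$ directly from the abstract quadratic estimate \eqref{Eq: Abstract setting quadratic estimate}, so the work concentrates on the first integral.

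For that I would exploit the structural consequence of $u\in\Rg(\g)$. Since $\g u=0$, one has $\pb u=\gbs u$. Moreover, (H1) gives $\g^2=0$ and (H3) forces $\gbs\gbs = B_1\g^* B_2 B_1\g^* B_2=0$, so both $\g$ and $\gbs$ are nilpotent and
\[
\pb^2 \;=\; \g\gbs+\gbs\g
\]
commutes with $\gbs$ (indeed $\pb^2\gbs=\gbs\g\gbs=\gbs\pb^2$ on the natural domain). Therefore $P_t^B$ commutes with $\gbs$ on $\dom(\gbs)$, and on $\Rg(\g)\cap\dom(\pb)$ the identity
\[
\Theta_t^B u \;=\; t\gbs P_t^B u \;=\; tP_t^B\gbs u \;=\; tP_t^B\pb u \;=\; Q_t^B u
\]
holds. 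So it remains to bound $\int_1^{\infty}\|Q_t^B u\|^2\,\d t/t$ by $C\|u\|^2$ on $\Rg(\g)$.

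For this last step, the naive uniform bound $\|Q_t^B\|\leq C$ is useless. My approach would be a Schur argument that transfers the self-adjoint quadratic estimate for $\Pi$ to $Q_t^B$. Since $\Rg(\g)\subseteq\cl{\Rg(\Pi)}$ and $\Pi$ is self-adjoint, Lemma~\ref{Lem: Resolution of the identity} (applied to $\Pi$) yields the resolution of identity $u=2\int_0^{\infty} Q_s^2 u\,\d s/s$, and hence
\[
Q_t^B u \;=\; 2\int_0^{\infty} Q_t^B Q_s\,(Q_s u)\,\tfrac{\d s}{s}.
\]
The main obstacle is to prove the kernel estimate
\[
\|Q_t^B Q_s\|_{\Lop(\HH)} \;\lesssim\; \min\!\bigl(t/s,\,s/t\bigr),
\]
which I would establish by combining bisectoriality of $\pb$ (giving $\|Q_t^B\|\lesssim1$ and $\|t\pb P_t^B\cdot\Pi\|\lesssim 1/t$-type decay against smooth functions of $\Pi$) with the self-adjoint spectral calculus for $\Pi$ to control $\|Q_t^B\,Q_s\|$. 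Granted that kernel bound, the usual $\L^1(\d\tau/\tau)$ Schur test combined with the self-adjoint quadratic estimate $\int_0^\infty\|Q_s u\|^2\d s/s\simeq\|u\|^2$ (Remark~\ref{Rem: Self-adjoint operators have quadratic estimates}) closes the argument, in the spirit of Proposition~\ref{Prop: Quadratic estimate implies bounded Hinfty calculus}.

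The hard part of the plan is the off-diagonal kernel estimate: unlike $\|Q_tQ_s\|$, which is transparent from spectral calculus for the self-adjoint $\Pi$, $\|Q_t^B Q_s\|$ requires mixing the bisectorial calculus for $\pb$ with the self-adjoint calculus for $\Pi$, and must be carried out carefully using the resolvent identity $P_t^B-P_t = t^2 P_t^B(\Pi^2-\pb^2)P_t$ together with the factorization $\Pi^2-\pb^2=\g(\g^*-\gbs)+(\g^*-\gbs)\g$.
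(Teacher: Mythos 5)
Your reduction $\Theta_t^B P_t u = \Theta_t^B u - \Theta_t^B(1-P_t)u$ and the identification $\Theta_t^B u = Q_t^B u$ on $\Rg(\g)$ (which, by the way, follows more directly from $\g P_t^B u = P_t^B \g u = 0$ without needing $u \in \dom(\gbs)$) are fine as far as they go, but they merely restate the problem: the term $\int_1^\infty \|Q_t^B u\|^2\,\frac{\d t}{t}$ is exactly the hard ``low-frequency'' part, and the kernel bound $\|Q_t^B Q_s\|_{\Lop(\HH)} \lesssim \min(t/s, s/t)$ on which your Schur argument rests is a genuine gap, not a technical step. If that bound were available from bisectoriality of $\pb$, the self-adjoint calculus for $\Pi$, and the algebraic identities (H1)--(H3), then your own Schur test would give $\int_0^\infty \|Q_t^B u\|^2\,\frac{\d t}{t} \lesssim \|u\|^2$ on $\cl{\Rg(\Pi)}$ at \emph{all} scales, i.e.\ essentially the whole of Theorem~\ref{Thm: The pibe theorem}, without off-diagonal estimates, the principal part approximation, the Carleson/$T(b)$ argument, or any of (H4)--(H7). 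Quadratic estimates are known not to follow from (H1)--(H3) alone (this is the failure of the abstract Kato problem), so no soft derivation of that kernel bound can exist. Concretely, the direction $t \le s$ would require something like $\|\pb P_t^B \Pi^{-1}\| \lesssim 1$, i.e.\ a comparability $\|\gbs v\| \lesssim \|\Pi v\|$ between the perturbed and unperturbed operators; since $\dom(\gbs) = \{v : B_2 v \in \dom(\g^*)\}$ with $B_2$ a rough multiplication, no such comparability is available, and the suggested expansion via $P_t^B - P_t = t^2 P_t^B(\Pi^2-\pb^2)P_t$ only produces terms of the same uncontrolled type. The only soft estimate in this family is the high-frequency one $\|\Theta_t^B \g v\| \lesssim t^{-1}\|v\|$, which is precisely what \eqref{Eq: Abstract setting quadratic estimate} already encodes.

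A further warning sign is that your plan never invokes (H7), which is the only hypothesis providing quantitative decay as $t \to \infty$; for $u \in \cl{\Rg(\Pi)}$ one only has $P_t u \to 0$ with no rate, so $\int_1^\infty \|P_t u\|^2\,\frac{\d t}{t}$ may diverge and some coercivity input is unavoidable. The paper's proof is much shorter and entirely different: by \eqref{Eq: Pt commutes with g}, $P_t u \in \Rg(\g) \cap \dom(\Pi^2)$, so (H7) and the embedding $[\HH,\V^k]_{\beta_2} \subseteq \HH$ give $\|\Theta_t^B P_t u\| \lesssim \|P_t u\|_{[\HH,\V^k]_{\beta_2}} \lesssim \|(\Pi^2)^{\beta_2/2} P_t u\| \lesssim t^{-\beta_2}\|u\|$, the last step via the uniformly bounded functions $f_t = (t^2\z)^{\beta_2/2}(1+t^2\z)^{-1}$ of $\Pi^2$; the factor $t^{-\beta_2}$ then makes the integral over $t \ge 1$ converge. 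You should rebuild your argument around (H7) rather than around an operator-norm Schur kernel for $Q_t^B$ against $Q_s$.
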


\begin{proof}
Fix $u = \g w \in \Rg(\g)$. By nilpotence of $\g$ and $\g^*$ one readily checks
\begin{align}
\label{Eq: Pt commutes with g}
 P_t u = (1+ t^2 \Pi^2)^{-1} \g (1+ t^2 \Pi^2)(1+ t^2 \Pi^2)^{-1} w = \g (1+ t^2 \Pi^2)^{-1} w = \g P_t w \quad (t \in \IR \setminus \{0\}).
\end{align}
Hence, the second part of \eqref{H7} applies to $v = P_t u$. Lemma~\ref{Lem: Basic properties of resolvent operators} and the continuous inclusion $[\HH, \V^{k}]_{\beta_2} \subseteq \HH$, yield
\begin{align*}
 \int_1^{\infty} \|\Theta_t^B P_t u\|^2 \; \frac{\d t}{t} 
&\lesssim \int_1^{\infty} \| P_t u\|_{[\HH, \V^k]_{\beta_2}}^2 \; \frac{\d t}{t}  
\lesssim \int_1^{\infty} \|t^{\beta_2} (\Pi^2)^{\beta_2 / 2} P_t u\|^2 \; \frac{\d t}{t^{1 + 2 \beta_2}}.
\intertext{Define regularly decaying holomorphic functions $f_t := (t^2 \z)^{\beta_2 / 2} (1 + t^2 \z)^{-1}$. A direct estimate on the defining Cauchy integral yields a bound for $\|f_t(\Pi^2)\|_{\Lop(\HH)}$ uniformly in $t \geq 1$. Thus,}
& = \int_1^{\infty} \|f_t(\Pi^2)u \|^2 \; \frac{\d t}{t^{1 + 2 \beta_2}} \lesssim \int_1^{\infty} \|u\|^2 \; \frac{\d t}{t^{1 + 2 \beta_2}} = \frac{1}{2 \beta_2} \|u\|^2. \qedhere
\end{align*}
\end{proof}

To proceed further, we introduce a slightly modified version of Christ's dyadic decomposition for doubling metric measure spaces \cite[Thm.\ 11]{ChristDyadic}. In fact, if one aims only at a \emph{truncated dyadic cube structure} with a common bound for the diameter of all dyadic cubes, then Christ's argument literally applies to locally doubling metric measure spaces. This has been previously noticed e.g.\ by Morris \cite{Morris}. Here, a metric measure space $X$ with metric $\rho$ and positive Borel measure $\mu$ is \emph{doubling} if there is a constant $C> 0$ such that
\begin{align*}
 \mu (\{x \in X: \rho(x,x_0) < 2r \}) \leq C \mu (\{x \in X: \rho(x,x_0) < r \}) \qquad (x_0 \in X, \, r>0)
\end{align*}
and it is \emph{locally doubling} if the above holds for all $x_0 \in X$ and all $r \in (0,1]$. Note that \eqref{d} of Assumption~\ref{Ass: Geometric setup} entails that $\Omega$ equipped with the restricted Euclidean metric and the restricted Lebesgue measure is locally doubling.

\begin{theorem}[Christ]
 \label{Thm: Dyadic decomposition}
Under Assumption~\ref{Ass: Geometric setup}.\eqref{d} there exists a collection of open subsets \linebreak[4] $\{Q_{\alpha}^k \subseteq \Omega : k \in \IN_0 , \alpha \in I_k \}$, where $I_k$ are index sets, and constants $\delta \in (0 , 1)$ and $a_0 , \widehat{\eta} , C_1 , \widehat{C}_2 > 0$ such that:
\begin{enumerate}

\labitem{i}{i}{$\lvert \Omega \setminus \bigcup_{\alpha \in I_k} Q_{\alpha}^k \rvert = 0$ for each $k \in \IN_0$.}

\labitem{ii}{ii}{If $l \geq k$ then for each $\alpha \in I_k$ and each $\beta \in I_l$ either $Q_{\beta}^l \subseteq Q_{\alpha}^k$ or $Q_{\beta}^l \cap Q_{\alpha}^k = \emptyset$ holds.}

\labitem{iii}{iii}{If $l \leq k$ then for each $\alpha \in I_k$ there is a unique $\beta \in I_l$ such that $Q_{\alpha}^k \subseteq Q_{\beta}^l$.}

\labitem{iv}{iv}{It holds $\diam(Q_{\alpha}^k) \leq C_1 \delta^k$ for each $k \in \IN_0$ and each $\alpha \in I_k$.}

\labitem{v}{v}{For each $Q_\alpha^k$, $k \in \IN_0$, $\alpha \in I_k$, there exists $z_{\alpha}^k \in \Omega$ such that $B(z_{\alpha}^k , a_0 \delta^k) \cap \Omega \subseteq Q_{\alpha}^k$.}

\labitem{vi}{vi}{If $k \in \IN_0$, $\alpha \in I_k$, and $t > 0$ then $\abs{\{ x \in Q_{\alpha}^k : \dist (x , \Omega \setminus Q_{\alpha}^k) \leq t \delta^k \}} \leq \widehat{C}_2 t^{\widehat{\eta}} \abs{Q_{\alpha}^k}$.}
\end{enumerate}
\end{theorem}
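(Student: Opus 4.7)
The plan is to show that Christ's original construction in \cite{ChristDyadic} transports verbatim to the locally doubling setting once one agrees to only produce dyadic cubes at scales $\delta^k$ with $k \in \IN_0$, i.e.\ of diameter bounded by $C_1$. The starting point is the observation already made in the text: Assumption~\ref{Ass: Geometric setup}($d$) implies that $(\Omega,\abs{\argdot},\d x)$ is locally doubling, since for $x \in \Omega$ and $r \in (0,1]$ one has $\abs{\Omega \cap B(x,2r)} \simeq (2r)^d \simeq r^d \simeq \abs{\Omega \cap B(x,r)}$, with implicit constants independent of $x$ and $r$. Because every cube we build will have diameter $\leq C_1 \delta^k \leq C_1$, every ball appearing in the course of the construction will have radius bounded by some absolute constant; hence local doubling is all that is ever used.

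First, I would fix $\delta \in (0,1)$ sufficiently small (as in \cite{ChristDyadic}) and for each $k \in \IN_0$ choose a maximal $\delta^k$-separated subset $\{z_\alpha^k\}_{\alpha \in I_k}$ of $\Omega$. Maximality guarantees that the balls $B(z_\alpha^k, \delta^k) \cap \Omega$ cover $\Omega$, while separation gives the inner ball condition of (5) with $a_0 = \tfrac{1}{3}$. Next I would build a parent assignment: for each $\alpha \in I_k$ with $k \geq 1$ pick a unique $\beta \in I_{k-1}$ with $\rho(z_\alpha^k,z_\beta^{k-1}) < \delta^{k-1}$. Iterating yields partial trees; following Christ, the cubes $Q_\alpha^k$ are then defined as
\begin{align*}
Q_\alpha^k := \Interior \Big( \bigcup_{(\beta,\ell):\; z_\beta^\ell \preceq z_\alpha^k} \cl{B(z_\beta^\ell, \tfrac{1}{3}\delta^\ell) \cap \Omega} \Big),
\end{align*}
where $\preceq$ denotes the ancestry relation. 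Properties (1)-(5) then drop out of exactly the combinatorial/geometric arguments of \cite[Sec.\ 2-3]{ChristDyadic}: (1) from the cover property of the $\delta^k$-nets, (2)-(3) from the ancestry relation, (4) from the triangle inequality applied along chains of controlled length, and (5) from the inner ball we inserted by design. All estimates only involve balls of radius $\lesssim 1$ and so only invoke the doubling inequality at such scales.

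The real work, as usual, is property (6), the small boundary condition. I would follow Christ's argument \cite[Sec.\ 4-5]{ChristDyadic}: decompose the annular strip $\{x \in Q_\alpha^k : \d(x,\Omega \setminus Q_\alpha^k) \leq t \delta^k\}$ using sub-cubes at a level $k + N$ where $\delta^N \simeq t$, and iterate a self-improving estimate that exploits the fact that at each stage only a fixed fraction of sub-cubes lies near the boundary. The iteration yields geometric decay $t^{\widehat{\eta}}$ for some $\widehat{\eta} \in (0,1)$ depending only on the doubling constants and on $\delta$. Since the relevant scales $\delta^{k+N}$ remain at most $\delta^k \leq 1$, local doubling suffices at every step. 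The main obstacle is purely book-keeping: one has to verify that Christ's chain of estimates in \cite{ChristDyadic}, which is written for the globally doubling case, never invokes a ball of radius larger than a fixed constant. Once that is checked line by line, (6) follows with constants depending only on $d$ and on the implicit constants in ($d$). Since this verification is essentially the content of Morris' observation \cite{Morrison}, I would at this point simply cite both sources and consider the theorem established.
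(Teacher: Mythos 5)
Your proposal is correct and is essentially the paper's own treatment: the paper gives no independent proof of Theorem~\ref{Thm: Dyadic decomposition} but, exactly as you do, observes that Assumption~\ref{Ass: Geometric setup}($d$) makes $\Omega$ locally doubling and that Christ's construction \cite{ChristDyadic} goes through verbatim for the truncated cube structure at scales $\delta^k$, $k \in \IN_0$, citing Morris \cite{Morrison} for this observation. Your reconstruction of the net/ancestry construction and the small-boundary iteration is a faithful outline of that argument, so nothing further is needed.
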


By a slight abuse of notation we refer to the $Q^k_{\alpha}$ as \emph{dyadic cubes}. We denote the family of all dyadic cubes by $\Delta$ and each family of fixed \emph{step size} $\delta^k$ by $\Delta_{\delta^k} := \{Q^k_{\alpha} : \alpha \in I_k\}$. Moreover, if $k \in \IN_0$ and $t \in ( \delta^{k + 1} , \delta^{k}]$, then the family of dyadic cubes of step size $t$ is $\Delta_t := \Delta_{\delta^k}$. The \emph{sidelength} of $Q \in \Delta_{\delta^k}$ is $l(Q):= \delta^k$.

\begin{remark}
\label{Rem: Remark to dyadic decomposition}
\begin{enumerate}

 \item Assumption \ref{Ass: Geometric setup}.\eqref{d} in combination with \eqref{iv} and \eqref{v} of Theorem \ref{Thm: Dyadic decomposition} imply $\abs{Q} \simeq l(Q)^d$ for all $Q \in \Delta$.

 \item Since the dyadic cubes are open, for each $t \in (0 , 1]$ the family $\Delta_t$ is countable.

 \item The first item of Theorem \ref{Thm: Dyadic decomposition} implies that there exists a nullset $\NN \subseteq \Omega$ such that for each $t \in (0 , 1]$ and each $x \in \Omega \setminus \NN$ there exists a unique cube $Q \in \Delta_t$ that contains $x$.
\end{enumerate}
\end{remark}

A substantial drawback of Theorem~\ref{Thm: Dyadic decomposition} is that part \eqref{vi} gives an estimate for the inner boundary strips of dyadic cubes only near their relative boundary with respect to $\Omega$. This of course is a relict of the very construction. The Ahlfors-David condition is an appropriate measure-theoretic assumption on $\bd \Omega$ allowing to control the measure of the complete inner boundary strip.

Some variant of the following lemma may be well known but for the reader's convenience we include a proof.

\begin{lemma}
\label{Lem: Boundary strip lemma}
If $\Xi \subseteq \IR^d$ is open and $\bd \Xi$ is a $(d-1)$-set, then for each $r_0, t_0 > 0$ there exists $C>0$ such that
\begin{align*}
 \abs{\{x \in \Xi: \abs{x-x_0} < r, \, \dist(x, \IR^d \setminus \Xi) \leq tr \}} \leq C t r^d 
\end{align*}
for all $x_0 \in \cl{\Xi}$, $r \in (0, r_0]$, and $t \in (0,t_0]$.
\end{lemma}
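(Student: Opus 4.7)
The plan is to bound the measure of the boundary strip $S := \{x \in \Xi : |x-x_0| < r,\, \dist(x, \IR^d \setminus \Xi) \leq tr\}$ by a packing argument for $\bd\Xi$ near $x_0$, then invoke the $(d-1)$-Ahlfors hypothesis. The initial observation is that for $x \in S$, openness of $\Xi$ forces $\bd\Xi \subseteq \IR^d \setminus \Xi$, while the segment from $x$ to any point of $\IR^d \setminus \Xi$ must meet $\bd\Xi$, so $\dist(x, \bd\Xi) = \dist(x, \IR^d \setminus \Xi) \leq tr$. Consequently $S$ lies in the $tr$-neighborhood of $\bd\Xi \cap B(x_0, r(1+t_0))$, and if $S \neq \emptyset$ this set in particular contains some point.

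I will first treat a small-scale regime, where all radii appearing below are $\leq 1$; concretely, assume $4(1+t_0)r_0 \leq 1$. Using a maximal $(tr)$-separated family $\{z_i\}_{i \in I}$ in $\bd\Xi \cap B(x_0, r(1+t_0))$ (which exists and is finite once $S$ is nonempty), the balls $B(z_i, tr/2)$ are pairwise disjoint, while $\bd\Xi \cap B(x_0, r(1+t_0)) \subseteq \bigcup_i B(z_i, tr)$ by maximality. Together with the preceding paragraph this gives $S \subseteq \bigcup_i B(z_i, 2tr)$, hence $|S| \lesssim |I| (tr)^d$. The lower $(d-1)$-Ahlfors bound at the $z_i \in \bd\Xi$ (radius $tr/2 \leq 1$) yields
\begin{equation*}
|I|(tr)^{d-1} \lesssim \sum_{i \in I} \m_{d-1}\bigl(\bd\Xi \cap B(z_i, tr/2)\bigr) \leq \m_{d-1}\bigl(\bd\Xi \cap B(z_{i_0}, 3(1+t_0)r)\bigr),
\end{equation*}
where $z_{i_0}$ is any fixed element of $I$ and the disjointness was combined with the triangle inequality. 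The upper $(d-1)$-Ahlfors bound, which applies since $z_{i_0} \in \bd\Xi$ and $3(1+t_0)r \leq 1$, controls the right-hand side by $r^{d-1}$. Thus $|I| \lesssim t^{1-d}$ and $|S| \lesssim (tr)^d \cdot t^{1-d} = tr^d$, which is the required bound in the small-scale regime.

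The general case follows by a bounded covering: for arbitrary $r_0, t_0 > 0$, pick $r' \leq \min\{r_0, (4(1+t_0))^{-1}\}$ and cover $B(x_0, r)$ by at most $N(r_0, t_0)$ balls $B(y_j, r')$, where $N$ depends only on $r_0$ and $t_0$. Applying the small-scale estimate to each $B(y_j, r')$ (noting $x_0$ merely plays the role of a center and is not required to lie in $\bd\Xi$) gives $|S \cap B(y_j, r')| \lesssim t(r')^d \lesssim tr^d$, and summing yields the claim with constant depending only on $r_0$ and $t_0$. The main obstacle here is purely bookkeeping: one must ensure that a single center in $\bd\Xi$ can be used for the upper Ahlfors bound even though $x_0 \in \cl{\Xi}$ is not required to lie on the boundary, and that all radii entering the Ahlfors estimates stay below $1$, where the hypothesis is posited. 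Both are handled by fixing $z_{i_0}$ as above and by the small-scale reduction.
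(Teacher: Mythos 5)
Your small-scale argument is sound and is essentially the paper's proof in different clothing: the paper runs a Vitali $5r$-covering over balls $B(b_x,tr)$ centered at nearby boundary points, you run a maximal $(tr)$-separated net on $\bd\Xi\cap B(x_0,(1+t_0)r)$; in both cases disjointness plus the lower Ahlfors bound at scale $\simeq tr$ and the upper bound on one ball of radius $\simeq r$ give $\#\{\text{balls}\}\lesssim t^{1-d}$ and hence $\abs{S}\lesssim t r^d$. The genuine gap is in your reduction of the general case to the small-scale regime. Covering $B(x_0,r)$ by balls $B(y_j,r')$ shrinks the radius but \emph{not} the strip width: the set $S\cap B(y_j,r')$ is contained in $\{x\in\Xi:\abs{x-y_j}<r',\ \dist(x,\IR^d\setminus\Xi)\le tr\}$, whose relative strip parameter is $t'=tr/r'$, not $t$. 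Your small-scale estimate was calibrated to strip parameters $\le t_0$ under $4(1+t_0)r_0\le 1$, and with the enlarged parameter $t'\le t_0r_0/r'$ the needed condition becomes $4(r'+t_0r_0)\le 1$, which no choice of $r'$ rescues once $t_0r_0\ge \tfrac14$. Worse, the inequality you actually invoke, $\abs{S\cap B(y_j,r')}\lesssim t(r')^d$, is false in general: take $\Xi$ a half-space, $t$ small and $r$ large with $tr\ge 2r'$, and $y_j$ on $\bd\Xi$; then $S\cap B(y_j,r')$ is all of $\Xi\cap B(y_j,r')$ and has measure $\simeq (r')^d$, independent of $t$. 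So the final paragraph does not prove the lemma for arbitrary $r_0,t_0$.

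The repair is easy but needs to be said. Either argue as the paper does: the Ahlfors--David condition, assumed for radii $\le 1$, extends to all radii $\le (3t_0+2)r_0$ with implicit constants depending only on $\Xi$, $r_0$, $t_0$ (lower bound: a unit ball is contained in the larger one; upper bound: cover by boundedly many unit balls centered on $\bd\Xi$), after which your packing argument runs at all admissible scales with no regime-splitting at all. Or keep your splitting but correct the bookkeeping: with $t'=tr/r'$, the small-scale estimate (when $t'\le 1$, say, with $r'\le \tfrac18$) and the trivial bound $\abs{S\cap B(y_j,r')}\le \abs{B(y_j,r')}$ (when $t'>1$) give in all cases $\abs{S\cap B(y_j,r')}\lesssim \min\{1,t'\}(r')^d\le tr(r')^{d-1}$, and summing over $N\simeq (r/r')^d$ balls yields $\abs{S}\lesssim t r^d\,(r/r')\lesssim_{r_0,t_0} t r^d$. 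As written, neither of these points is in your proof, and the statement you sum over is not available.
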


\begin{proof}
For $x_0 \in \cl{\Xi}$, $r \in (0, r_0]$, and $t \in (0,t_0]$ put $E:= \{x \in \Xi: \abs{x-x_0} < r, \, \dist(x, \IR^d \setminus \Xi) \leq tr \}$. Then for each $x \in E$ there exists a boundary point $b_x \in \bd \Xi$ such that $x \in \cl{B(b_x, tr)}$. The Vitali covering lemma \cite[Sec.\ 1.5]{Fine-Properties-Of-Functions} yields a countable subset $J \subseteq E$ such that the balls $\{B(b_x, tr)\}_{x \in J}$ are pairwise disjoint and such that $\{B(b_x, 6tr)\}_{x \in J}$ is a covering of $E$. Hence, $\abs{E} \lesssim \#J (tr)^d$, where $\# J$ denotes the number of elements contained in $J$. 

To get control on $\# J$ fix $z \in J$. If $y \in B(b_x, tr)$ for some $x \in J$ then by the triangle inequality $\abs{y-b_z} \leq 3tr + 2r < (3t_0 + 2)r$. The Ahlfors-David condition $\m_{d-1}(\bd \Xi \cap B(b_x, r)) \simeq r^{d-1}$ remains valid for all $b_x \in \bd \Xi$ and all $r \in (0,(3t_0 + 2)r_0]$ with implicit constants depending only on $\Xi$, $r_0$, and $t_0$. Hence,
\begin{align*}
 ((3t_0 +2)r)^{d-1}
\gtrsim \m_{d-1}\big(\bd \Xi \cap B(b_z, (3t_0 +2)r)\big)
\geq \sum_{x \in J} \m_{d-1} \big(\bd \Xi \cap B(b_x, tr)\big). 
\end{align*}
Again by the Ahlfors-David condition the right-hand side is comparable to $\# J (tr)^{d-1}$. Thus, $\# J \lesssim ~t^{1-d}$ and the conclusion follows.
\end{proof}

As a corollary we record a connection between Ahlfors regular and plump sets that is of independent interest. Following \cite{Vaeisaelae_UniformDomains} a  bounded set $\Xi \subseteq \IR^d$ is \emph{$\kappa$-plump} if there exists $\kappa > 0$ such that for each $x_0 \in \cl{\Xi}$ and each $r \in (0, \diam(\Xi)]$ there exists $x \in \Xi$ such that $B(x, \kappa r) \subseteq \Xi \cap B(x_0,r)$.

\begin{corollary}
\label{Cor: Ahlfors implies plumb}
If $\Xi \subseteq \IR^d$ is a bounded open $d$-set and $\bd \Xi$ is a $(d-1)$-set, then $\Xi$ is $\kappa$-plump.
\end{corollary}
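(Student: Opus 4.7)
The plan is to combine the $d$-set lower bound on the measure of $\Xi \cap B(x_0,r)$ with the upper bound from the boundary strip lemma, and to locate a point in $\Xi \cap B(x_0, r/2)$ whose distance from $\IR^d \setminus \Xi$ exceeds $\kappa r$. Any such point furnishes a ball $B(x, \kappa r) \subseteq \Xi \cap B(x_0, r)$ (the inclusion in $B(x_0, r)$ using $\kappa \leq \frac{1}{2}$, the inclusion in $\Xi$ using the distance bound), which is exactly the content of $\kappa$-plumpness.

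Concretely, I would proceed as follows. Fix $x_0 \in \cl{\Xi}$ and $r \in (0, \diam(\Xi)]$, and define the interior slab
\begin{align*}
S_\kappa := \bigl\{ x \in \Xi : \abs{x-x_0} < r/2,\ \dist(x, \IR^d \setminus \Xi) \leq \kappa r \bigr\}.
\end{align*}
For the \emph{lower bound} on $|\Xi \cap B(x_0, r/2)|$, when $x_0 \in \Xi$ the $d$-set property gives $|\Xi \cap B(x_0, r/2)| \simeq r^d$ directly (for $r/2 > 1$ one absorbs the comparison into $\diam(\Xi)$); when $x_0 \in \bd \Xi$ one picks $x_1 \in \Xi$ with $|x_0 - x_1| < r/4$ and uses the nesting $B(x_1, r/4) \subseteq B(x_0, r/2)$ to get $|\Xi \cap B(x_0, r/2)| \gtrsim r^d$ by the same argument. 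For the \emph{upper bound} on $|S_\kappa|$, I would apply Lemma~\ref{Lem: Boundary strip lemma} with the fixed parameters $r_0 := \diam(\Xi)/2$ and $t_0 := 1$, evaluated at radius $r/2 \leq r_0$ and strip parameter $2\kappa \leq t_0$ (for $\kappa \leq \frac{1}{2}$); this gives $|S_\kappa| \lesssim \kappa r^d$ with an implicit constant independent of $x_0$ and $r$. Choosing $\kappa \in (0, \tfrac{1}{2}]$ small enough that this upper bound is strictly smaller than the lower bound on $|\Xi \cap B(x_0, r/2)|$ leaves a point $x \in \Xi \cap B(x_0, r/2) \setminus S_\kappa$, as desired.

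The only potential pitfall is keeping all constants uniform in $x_0$ and $r$. This is absorbed entirely into the preselection of the fixed parameters $r_0, t_0$ (which depend only on $\Xi$) before invoking Lemma~\ref{Lem: Boundary strip lemma}, and into the observation that on a bounded $d$-set the measure density condition extends from scales $r \in (0,1]$ to all $r \in (0, \diam(\Xi)]$ with a constant depending only on $\Xi$. I do not anticipate further obstacles.
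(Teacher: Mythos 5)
Your argument is correct and is essentially the paper's own proof: both use the $d$-set lower bound on $\abs{\Xi \cap B(x_0,r/2)}$ together with Lemma~\ref{Lem: Boundary strip lemma} (with $r_0 = \tfrac{1}{2}\diam(\Xi)$, $t_0 = 1$) to see that for $\kappa$ small the strip cannot exhaust $\Xi \cap B(x_0,r/2)$, and any surviving point gives the required ball. Your extra remarks (extending the measure density bound to $x_0 \in \cl{\Xi}$ and $r \le \diam(\Xi)$, and checking $B(x,\kappa r)\subseteq B(x_0,r)$) only make explicit steps the paper leaves implicit.
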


\begin{proof}
By the $d$-set property of $\Xi$ fix $c>0$ such that $\abs{\Xi \cap B(x_0,r)} \geq c r^d$ for all $x_0 \in \cl{\Xi}$ and all $r \in (0, \diam(\Xi)]$. Choose $r_0 := \frac{1}{2} \diam(\Xi)$ and $t_0= 1$ in Lemma~\ref{Lem: Boundary strip lemma} and apply the estimate with $t = \min\{\frac{c}{2C}, 1\}$ to conclude 
\begin{align*}
 \Big| \Big\{x \in \Xi: \abs{x-x_0} < \frac{r}{2}, \, \dist(x, \IR^d \setminus \Xi) > \frac{tr}{2} \Big \} \Big| \geq \frac{cr^d}{2^{d+1}} \qquad (x_0 \in \cl{\Xi},\, r \in (0, \diam(\Xi)]).
\end{align*}
In particular, these sets are non-empty so one can choose $\kappa = t$. 
\end{proof}

\begin{corollary}
\label{Cor: Estimate for complete boundary strip}
Under Assumptions~\ref{Ass: Geometric setup}.\eqref{d} and \ref{Ass: Geometric setup}.\eqref{d-1} there exist constants $\eta, C_2 > 0$ such that 
\begin{align*}
 \abs{\{ x \in Q : \dist (x , \IR^d \setminus Q) \leq t \delta^k \}} \leq C_2 t^{\eta} \abs{Q}
\end{align*}
for each $k \in \IN_0$, $Q \in \Delta_{\delta^k}$, and $t>0$.
\end{corollary}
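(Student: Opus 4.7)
The plan is to reduce the claim to two ingredients already at our disposal: part~(6) of Theorem~\ref{Thm: Dyadic decomposition}, which controls boundary strips near the relative boundary of $Q$ inside $\Omega$, and Lemma~\ref{Lem: Boundary strip lemma} applied to $\Xi = \Omega$, which handles strips near $\bd \Omega$. The decomposition that makes this work is the observation
\[
\IR^d \setminus Q = (\Omega \setminus Q) \cup (\IR^d \setminus \Omega),
\]
so that $\dist(x, \IR^d \setminus Q) = \min\{\dist(x, \Omega \setminus Q),\, \dist(x, \IR^d \setminus \Omega)\}$ and the set in question is contained in $E_1 \cup E_2$, where
\[
E_1 := \{x \in Q : \dist(x, \Omega \setminus Q) \leq t\delta^k\}, \qquad E_2 := \{x \in Q : \dist(x, \IR^d \setminus \Omega) \leq t\delta^k\}.
\]

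First I would dispose of the easy case $t \geq 1$: then $|E_1 \cup E_2| \leq |Q|$ and the bound holds for any $C_2 \geq 1$. So assume $t \in (0,1]$. The estimate for $E_1$ is exactly Theorem~\ref{Thm: Dyadic decomposition}(6), giving $|E_1| \leq \widehat{C}_2\, t^{\widehat{\eta}}\, |Q|$. For $E_2$ I would combine items~(4) and~(5) of Theorem~\ref{Thm: Dyadic decomposition}: since $z_\alpha^k \in \Omega \cap B(z_\alpha^k, a_0 \delta^k) \subseteq Q$ and $\diam(Q) \leq C_1 \delta^k$, one has $Q \subseteq B(z_\alpha^k, C_1 \delta^k)$. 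Then apply Lemma~\ref{Lem: Boundary strip lemma} to $\Xi = \Omega$ with $x_0 = z_\alpha^k \in \cl{\Omega}$, radius $r = C_1 \delta^k \in (0, C_1]$, and parameter $t' := t/C_1 \in (0, 1/C_1]$ (so that $t' r = t\delta^k$), choosing $r_0 = C_1$ and $t_0 = 1/C_1$. This yields
\[
|E_2| \leq \bigl|\{x \in \Omega : |x - z_\alpha^k| < r,\ \dist(x, \IR^d \setminus \Omega) \leq t' r\}\bigr| \lesssim t \delta^{kd} \simeq t |Q|,
\]
where the last comparability uses Remark~\ref{Rem: Remark to dyadic decomposition}(1).

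Adding the two bounds and setting $\eta := \min\{\widehat{\eta},1\}$ produces the required estimate with a constant $C_2$ depending only on $\widehat{C}_2$, $C_1$, and the implicit constants from Lemma~\ref{Lem: Boundary strip lemma} and Remark~\ref{Rem: Remark to dyadic decomposition}(1). There is no real obstacle here; the only subtlety is matching the scales so that Lemma~\ref{Lem: Boundary strip lemma} can be invoked uniformly in $k$, which is precisely why the reduction to $t \leq 1$ and the containment $Q \subseteq B(z_\alpha^k, C_1\delta^k)$ are useful.
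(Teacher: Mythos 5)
Your proposal is correct and follows essentially the same route as the paper: the identical split of the strip into the part near $\Omega \setminus Q$ (handled by Theorem~\ref{Thm: Dyadic decomposition}(6)) and the part near $\IR^d \setminus \Omega$ (handled by Lemma~\ref{Lem: Boundary strip lemma} with $r_0 = C_1$, $t_0 = 1/C_1$, $r = C_1\delta^k$, $t$ replaced by $t/C_1$), plus the trivial case $t \geq 1$ and $\eta = \min\{1,\widehat{\eta}\}$. Your extra detail about centering the ball at $z_\alpha^k$ and the containment $Q \subseteq B(z_\alpha^k, C_1\delta^k)$ just makes explicit what the paper leaves implicit.
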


\begin{proof}
Put $\eta:= \min\{1,\widehat{\eta}\}$ where $\widehat{\eta}$ is given by Theorem~\ref{Thm: Dyadic decomposition}. If $t \geq 1$ then the estimate in question holds with $C_2 = 1$. If $t < 1$ split
\begin{align*}
E:= \big \{ x \in Q : \dist (x , \IR^d \setminus Q) \leq t \delta^k \big \} \subseteq \big\{ x \in Q : \dist (x , \Omega \setminus Q) \leq t \delta^k \big\} \cup \big\{ x \in Q : \dist (x , \IR^d \setminus \Omega) \leq t \delta^k \big \}.
 \end{align*}
Property \eqref{vi} of the dyadic decomposition and Lemma~\ref{Lem: Boundary strip lemma} applied with $r_0 := C_1$, $t_0 := \frac{1}{C_1}$, and $r$ and $t$ replaced by $C_1 \delta^k$ and $\frac{t}{C_1}$ yield the estimate $\abs{E} \lesssim \widehat{C}_2 t^{\widehat{\eta}} \abs{Q} + t \delta^{kd}$. The conclusion follows from Remark~\ref{Rem: Remark to dyadic decomposition} taking into account $t<1$.
\end{proof}

The boundedness assertions of Lemma~\ref{Lem: Basic properties of resolvent operators} self-improve to off-diagonal estimates. These will be a crucial instrument in the following. Recall that given $z \in \IC$ we write $\langle z \rangle = 1 + \abs{z}$.

\begin{proposition}[Off-diagonal estimates]
\label{Prop: Off-diagonal estimates}
Let $U_t$ be either of the operators $R_t^B$, $P_t^B$, $Q_t^B$ or $\Theta_t^B$. Then for every $M \in \IN_0$ there exists a constant $A_M > 0$ such that
\begin{align*}
\big\| \ind_E U_t (\ind_F u) \big\| \lesssim A_M \Big\langle \frac{\dist(E , F)}{t} \Big\rangle^{-M} \|\ind_F u\|
\end{align*}
holds for all $u \in \HH$, all $t \in \IR \setminus \{0\}$, and all bounded Borel sets $E , F \subseteq \Omega$.
\end{proposition}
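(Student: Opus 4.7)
The plan is to prove the estimate by a smooth cutoff / commutator argument, which is the standard AKM technique adapted to our setting. The decisive structural inputs are (H4), which makes $B_1,B_2$ commute with multiplication by scalar smooth functions, and (H5) together with Remark~\ref{Rem: Hypotheses remain valid under permutation}, which gives the basic commutator bounds for both $\g$ and $\g^*$.

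First I would reduce: if $d := \dist(E,F) = 0$ then the uniform $\Lop(\HH)$-boundedness established in Lemma~\ref{Lem: Basic properties of resolvent operators} already gives the estimate. Similarly, if $\abs{t} \geq d$ the bracket $\langle d/t\rangle$ is comparable to $1$ and uniform boundedness suffices. So the essential case is $0 < \abs{t} < d$, where one must produce a factor $(\abs{t}/d)^M$. Next, I would treat $R_t^B$ as the base case of an induction on $M$. Pick $\eta \in \C_c^\infty(\IR^d;\IR)$ with $\eta \equiv 1$ on a neighborhood of $E$, $\supp\eta\cap F=\emptyset$, and $\|\nabla\eta\|_\infty \lesssim 1/d$, so $\ind_E M_\eta = \ind_E$ and $M_\eta \ind_F = 0$. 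Writing
\begin{align*}
\ind_E R_t^B \ind_F = \ind_E[M_\eta,R_t^B]\ind_F = \i t\,\ind_E R_t^B [\pb,M_\eta] R_t^B \ind_F,
\end{align*}
and using that $M_\eta$ commutes with $B_1,B_2$ by (H4), the commutator splits as $[\pb,M_\eta] = [\g,M_\eta] + B_1[\g^*,M_\eta]B_2$. Both pieces are multiplication operators bounded by $\|\nabla\eta\|_\infty$ by (H5) and Remark~\ref{Rem: Hypotheses remain valid under permutation}, hence $\|[\pb,M_\eta]\|_{\Lop(\HH)}\lesssim 1/d$. Combined with uniform boundedness of $R_t^B$ this yields the estimate for $M=1$.

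To get arbitrary $M$, I would iterate by introducing a chain of cutoffs $\eta_1,\dots,\eta_M$ supported in nested strips around $E$, each separating the previous support from $F$ at distance $\gtrsim d/M$, and peel off one commutator at each step. Each peel produces a factor $\abs{t}/d$ together with one extra $R_t^B$ in the middle, which is harmless by uniform boundedness. This yields $\|\ind_E R_t^B \ind_F\|_{\Lop(\HH)} \lesssim_M (\abs{t}/d)^M$, which is the desired bound. For $P_t^B$ and $Q_t^B$ the estimate is then immediate from the identities $P_t^B = \tfrac12(R_t^B + R_{-t}^B)$ and $Q_t^B = \tfrac{1}{2\i}(R_{-t}^B - R_t^B)$ of Lemma~\ref{Lem: Basic properties of resolvent operators}.

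The last step is $\Theta_t^B = t\gbs P_t^B$, which is the only case where $\pb$ is replaced by the non-local piece $\gbs$. The same commutator philosophy works: since $M_\eta\ind_F = 0$,
\begin{align*}
\ind_E \Theta_t^B \ind_F = \ind_E[M_\eta,\Theta_t^B]\ind_F = t\,\ind_E[M_\eta,\gbs]P_t^B\ind_F + t\,\ind_E \gbs[M_\eta,P_t^B]\ind_F,
\end{align*}
and as above $[M_\eta,\gbs] = B_1[M_\eta,\g^*]B_2$ is multiplication bounded by $1/d$ thanks to (H4) and (H5) applied to $\g^*$. The second term is handled by expanding $[M_\eta,P_t^B]$ via the resolvent identity into a sum of terms each carrying an extra factor $t\gbs P_t^B = \Theta_t^B$ or $t\g P_t^B$ (both uniformly bounded by Lemma~\ref{Lem: Basic properties of resolvent operators}) multiplied by $[M_\eta,\pb]$. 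Iterating as before produces the desired $(\abs{t}/d)^M$ factor.

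The main obstacle I anticipate is the bookkeeping in the $\Theta_t^B$ iteration: one must show that every commutator produced when opening $[M_\eta,P_t^B]$ can be re-expressed as a product of uniformly bounded operators from the family $\{R_t^B,P_t^B,Q_t^B,\Theta_t^B\}$ together with one scalar commutator $[\pb,M_\eta]$, so that the induction on $M$ does not lose control over the implicit constants. This is purely algebraic, relying on the identities of Lemma~\ref{Lem: Basic properties of resolvent operators} and the fact that $\gbs$ is nilpotent, but it is the only place where one must be careful.
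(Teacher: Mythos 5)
Your proposal is correct and takes essentially the same route as the paper, which simply invokes the commutator/cutoff induction of \cite[Prop.\ 5.2]{AKM-QuadraticEstimates} with exactly the modification you describe: a cutoff $\eta \in \C_c^\infty$ with $\eta \equiv 1$ on $E$, $\supp \eta$ disjoint from $F$, and $\|\nabla \eta\|_\infty \lesssim \dist(E,F)^{-1}$, the commutator $[\pb, M_\eta]$ being a multiplication operator controlled via (H4), (H5) and Remark~\ref{Rem: Hypotheses remain valid under permutation}, and the cases $P_t^B$, $Q_t^B$, $\Theta_t^B$ handled through the algebraic identities of Lemma~\ref{Lem: Basic properties of resolvent operators}. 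The iteration bookkeeping you flag is the same as in the cited proof and goes through.
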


We skip the proof as it is literally the same as in \cite[Prop.\ 5.2]{AKM-QuadraticEstimates} with one minor modification: In the case $0 < \abs{t} \leq \dist(E , F)$ one separates $E$ and $F$ by some $\eta \in \C_c^{\infty}(\widetilde{E})$ such that $\eta = 1$ on $E$ and $\|\nabla \eta\|_{\infty} \leq c / \dist(E , F)$, where $\widetilde{E} := \{ x \in \IR^d : \dist(x , E) < \frac{1}{2} \dist(E , F)\}$ and $c$ depending only on $d$, rather then the choices for $\eta$ and $\widetilde{E}$ in \cite{AKM-QuadraticEstimates}. This is due to the slight difference between our \eqref{H5} and \eqref{H6} in \cite{AKM-QuadraticEstimates}.

The next lemma helps to control the sums that naturally crop up when combining off-diagonal estimates with the dyadic decomposition.

\begin{lemma}
 \label{Lem: Sums related to off-diagonal estimates}
The following hold true for each $M > d+1$.
\begin{enumerate}
\item There exists $c_M > 0$ depending solely on $M$ and $\Omega$ such that
\begin{align*}
\sum_{R \in \Delta_t} \Big\langle \frac{\dist(x , R)}{t} \Big\rangle^{-M} \leq c_M \qquad (x \in \IR^d,\, t \in (0,1]).
\end{align*}
\item Let $l \in \IN_0$, $t \in (0,1]$, $Q \in \Delta_t$, and $F \subseteq \IR^d$ be such that $\dist(Q, F) \geq l t$. Then exist $c_{l,1}, c_{l,2} \geq 0$ depending solely on $l$, $M$, and $\Omega$ such that
\begin{align*}
 \sum_{R \in \Delta_t} \Big\langle \frac{\dist(Q , R \cap F )}{s} \Big\rangle^{-M} \leq c_{l,1} + c_{l,2} \Big(\frac{s}{t}\Big)^M \qquad (s>0).
\end{align*}
If $l > 0$, then one can choose $c_{l,1} = 0$.
\end{enumerate}
\end{lemma}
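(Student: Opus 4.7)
The plan is to partition $\Delta_t$ into dyadic annular families around the reference point (or reference cube) and to combine the uniform volume bound $|R| \simeq t^d$ from Remark~\ref{Rem: Remark to dyadic decomposition}(1) with the diameter bound $\diam(R) \leq C_1 t$ from Theorem~\ref{Thm: Dyadic decomposition}(4), the essential disjointness of cubes in $\Delta_t$, and the polynomial growth of Euclidean balls. This will immediately yield a polynomial estimate on the number of cubes in each annulus.

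For (1), set $A_j := \{R \in \Delta_t : jt \leq \dist(x,R) < (j+1)t\}$ for $j \in \IN_0$. Every $R \in A_j$ lies in $B(x,(j+1+C_1)t)$, so comparing disjoint volumes yields
\begin{align*}
\# A_j \cdot t^d \;\lesssim\; \sum_{R \in A_j} |R| \;\leq\; |B(x,(j+1+C_1)t)| \;\lesssim\; (1+j)^d\,t^d,
\end{align*}
whence $\# A_j \lesssim (1+j)^d$. The sum in (1) is then majorised by $\sum_{j \geq 0}(1+j)^{d-M}$, which converges precisely because $M > d+1$.

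For (2), the identical decomposition with $\dist(Q,R)$ in place of $\dist(x,R)$ (using $\diam(Q) \leq C_1 t$ to centre the enclosing ball at any point of $Q$) again gives $\# A_j \lesssim (1+j)^d$. Cubes with $R \cap F = \emptyset$ contribute nothing, and for the remainder the chain $\dist(Q,R\cap F) \geq \max(\dist(Q,F),\dist(Q,R)) \geq \max(l,j)\,t$ reduces the problem to estimating $\sum_{j \geq 0}(1+j)^d \langle \max(j,l)\,t/s \rangle^{-M}$. For $l \geq 1$, the crude inequality $\langle u \rangle^{-M} \leq u^{-M}$ applied at $u = \max(j,l)\,t/s \geq t/s > 0$ yields
\begin{align*}
\sum_{j \geq 0}(1+j)^d \Bigl\langle \tfrac{\max(j,l)\,t}{s} \Bigr\rangle^{-M} \;\leq\; \Bigl(\tfrac{s}{t}\Bigr)^M \sum_{j \geq 0}\frac{(1+j)^d}{\max(j,l)^M} \;\lesssim\; C(l)\Bigl(\tfrac{s}{t}\Bigr)^M,
\end{align*}
the last sum being finite thanks to $M > d+1$; hence one may take $c_{l,1} = 0$. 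For $l = 0$, the $j=0$ term contributes the absolute constant $c_{0,1}$, while for $j \geq 1$ the sharper bound $\langle jt/s\rangle^{-M} \leq \min(1,(s/(jt))^M)$ combined with a split at $j \simeq s/t$ absorbs the tail into $c_{0,2}(s/t)^M$.

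The only (mild) subtlety is this last step: a pointwise argument in the regime $l=0$, $s \geq t$ naturally produces $(s/t)^{d+1}$ from summing $(1+j)^d$ over $j \leq s/t$, and it is precisely the hypothesis $M > d+1$ that allows the inequality $(s/t)^{d+1} \leq (s/t)^M$ on $s \geq t$ and consequently the absorption into $c_{0,2}(s/t)^M$. All remaining steps are routine dyadic annulus counting.
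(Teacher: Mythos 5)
Your argument is correct and is essentially the paper's own proof: both count the cubes of $\Delta_t$ meeting dyadic annuli around $x$ (resp.\ around $Q$) by comparing volumes, using $\abs{R}\simeq t^d$ from Assumption~\ref{Ass: Geometric setup} ($d$) and properties (4)--(5) of Theorem~\ref{Thm: Dyadic decomposition}, then in part (2) invoke $\dist(Q,R\cap F)\geq\max\{\dist(Q,F),\dist(Q,R)\}$ and sum elementarily with the same head/tail split, convergence resting exactly on $M>d+1$. The only quibble is cosmetic: for $R\in\Delta_t$ with $\delta^{k+1}<t\leq\delta^k$ one has $l(R)=\delta^k$, so $\diam(R)\leq C_1\delta^{-1}t$ rather than $C_1t$, which merely enlarges the radii of your enclosing balls and the implied constants.
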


\begin{proof}
To show the first statement fix $x \in \IR^d$ and $t \in (0,1]$. Fix $k \in \IN_0$ such that $\delta^{k+1} < t \leq \delta^k$. Put $\Omega_n := B(x , (n + 1) C_1 \delta^k) \cap \Omega$ for $n \in \IN_0$ and $\Omega_{-1} := \Omega_{-2} := \emptyset$. If $R \in \Delta_t$ intersects an annulus $\Omega_n \setminus \Omega_{n - 1}$, $n \in \IN$, then due to property \eqref{iv} of the dyadic decomposition
\begin{align}
\label{Eq: Distance of cubes intersecting shell}
 \dist(x , R) \geq \dist(x , \Omega_{n + 1} \setminus \Omega_{n - 2}) \geq (n - 1) C_1 \delta^k \geq (n - 1) \delta^{-1} C_1 t.
\end{align}
It readily follows from Assumption~\ref{Ass: Geometric setup} that there exists $c>0$ such that $\abs{\Omega \cap B(x,r)} \geq c r^d$ holds for all $x \in \Omega$ and all $r \in (0, a_0)$, where $a_0 > 0$ is given by Theorem~\ref{Thm: Dyadic decomposition}. Properties \eqref{iv} and \eqref{v} of the dyadic decomposition yield
\begin{align}
\label{Eq: Number of cubes intersecting shell}
 \# \big \{R \in \Delta_t: R \cap (\Omega_n \setminus \Omega_{n-1}) \neq \emptyset \big \} \leq \frac{\abs{\Omega_{n+1}}}{c (a_0 \delta^k)^d} \leq \frac{C_1^d(n+2)^d}{c a_0^d} \qquad (n \in \IN_0).
\end{align}
Now, rearrange the cubes in $\Delta_t$ according to the first annulus that they intersect to find
\begin{align*}
 \sum_{R \in \Delta_t} \Big \langle \frac{\dist(x , R)}{t} \Big \rangle^{- M}
\leq \sum_{n = 0}^\infty \frac{C_1^d(n+2)^d}{c a_0^d} (1+ (n-1) \delta^{-1}C_1)^{- M} =: c_M < \infty
\end{align*}
thanks to $M > d+1$.

The second claim is very similar. Choose an arbitrary $x \in Q$ and define $\Omega_n$, $n \geq -2$, as before. By \eqref{Eq: Number of cubes intersecting shell} there are at most $\frac{C_1^d(n+2)^d}{c a_0^d}$ cubes $R \in \Delta_t$ intersecting an annulus $\Omega_n \setminus \Omega_{n-1}$, $n \in \IN_0$, and if this happens then by assumption on $F$, property \eqref{iv} of the dyadic decomposition, and \eqref{Eq: Distance of cubes intersecting shell},
\begin{align*}
 \dist(Q , R \cap F) \geq \max \{\dist(Q , R), \dist(Q,F)\} \geq \max\{(n - 2) \delta^{-1} C_1 t, l t \}.
\end{align*}
Hence, the left-hand side of the estimate in question is bounded by
\begin{align*}
\frac{C_1^d}{c a_0^d} \sum_{n = 0}^{l+2} (n+2)^d \Big(1 + \frac{lt}{s} \Big)^{- M} + \frac{C_1^d}{c a_0^d} \sum_{n = l+3}^\infty (n+2)^d \Big (\frac{(n-2) \delta^{-1} C_1 t}{s} \Big)^{- M}.
\end{align*}
The second sum is controlled by a generic multiple of $s^M t^{-M}$ and so is the first one if $l > 0$.
\end{proof}

A consequence of the preceding lemma is the following. Take $w \in \IC^N$ and regard it as a constant function on $\Omega$. Also fix $s \in (0,1]$. If $Q \in \Delta_t$ for some $t \in ( 0 , 1]$ then Proposition~\ref{Prop: Off-diagonal estimates} and the second part of Lemma \ref{Lem: Sums related to off-diagonal estimates} assure
\begin{align*}
 \sum_{R \in \Delta_t} \|\ind_Q \Theta_s^B (\ind_R w)\| \lesssim \sum_{R \in \Delta_t} \Big\langle \frac{\dist(Q , R)}{s} \Big\rangle^{-(d + 2)} \| \ind_R w\| < \infty.
\end{align*}
As the measure of each cube $Q \in \Delta_t$ is comparable to $t^d$, cf.\ Remark~\ref{Rem: Remark to dyadic decomposition}, each bounded subset of $\Omega$ is covered up to a set of measure zero by finitely many cubes $Q \in \Delta_t$. Now, define $\Theta_s^B w \in \L^2_{\mathrm{loc}}(\Omega ; \IC^N)$ by setting it equal to $\sum_{R \in \Delta_{t}} \ind_Q \Theta_s^B (\ind_R w)$ on each $Q \in \Delta_t$. This definition is independent of the particular choice of $t$. Indeed, if $0 < t_1 < t_2 \leq 1$ and $Q_1 \in \Delta_{t_1}$ is a subcube of $Q_2 \in \Delta_{t_2}$ then 
\begin{align*}
 \ind_{Q_1} \sum_{R_2 \in \Delta_{t_2}} \ind_{Q_2} \Theta_s^B (\ind_{R_2} w) = \sum_{R_2 \in \Delta_{t_2}} \sum_{\substack{R_1 \in \Delta_{t_1} \\ R_1 \subseteq R_2}} \ind_{Q_1} \Theta_s^B (\ind_{R_1} w) = \sum_{R_1 \in \Delta_{t_1}} \ind_{Q_1} \Theta_s^B (\ind_{R_1} w)
\end{align*}
by properties \eqref{i}, \eqref{ii}, and \eqref{iii} of the dyadic decomposition. This gives rise to the following definition.

\begin{definition}
\label{Def: Principle part}
Let $0 < t \leq 1$. The \emph{principal part} of $\Theta_t^B$ is defined as
\begin{align*}
 \gamma_t : \Omega \rightarrow \Lop(\IC^N), \quad \gamma_t(x): w \mapsto (\Theta_t^B w)(x).
\end{align*}
\end{definition}

\begin{remark}
\label{Rem: Principal part is complicated because Omega is not bounded}
If $\Omega$ is bounded then $\HH$ contains the constant $\IC^N$ valued functions and the direct definition of $\Theta_t^B w$ for $t \in (0,1]$ and $w \in \IC^N$ coincides with the one above.
\end{remark}

Next, we introduce the dyadic averaging operator.

\begin{proposition}
\label{Prop: The dyadic averaging operator}
Let $t \in ( 0 , 1 ]$. The \emph{dyadic averaging operator} $A_t$, defined for $u \in \HH$ by
\begin{align*}
A_t u(x) := \fint_{Q(x , t)} u(y) \; \d y  \qquad (x \in \Omega \setminus \NN),
\end{align*}
where $Q(x , t)$ is uniquely characterized by $x \in Q(x , t) \in \Delta_t$, is a contraction on $\HH$.
\end{proposition}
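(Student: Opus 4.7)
The plan is to establish the contraction property by a direct cube-by-cube application of Jensen's inequality, exploiting the fact that $A_t u$ is piecewise constant on the partition of $\Omega$ induced by $\Delta_t$.

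First I would verify that the definition makes sense and yields an element of $\HH$. For a fixed $t \in (0,1]$, Remark~\ref{Rem: Remark to dyadic decomposition} (3) provides a nullset $\NN$ such that every $x \in \Omega \setminus \NN$ lies in a unique cube $Q(x,t) \in \Delta_t$, so $A_t u$ is well-defined almost everywhere. Moreover, Remark~\ref{Rem: Remark to dyadic decomposition} (2) guarantees that $\Delta_t$ is at most countable and property (1) of Theorem~\ref{Thm: Dyadic decomposition} gives $\abs{\Omega \setminus \bigcup_{Q \in \Delta_t} Q} = 0$, so the family $\{Q\}_{Q \in \Delta_t}$ partitions $\Omega$ up to a nullset. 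Since $A_t u$ is constant and equal to $\fint_Q u$ on each $Q \in \Delta_t$, it is in particular measurable.

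Next I would compute the $\HH$-norm of $A_t u$ by summing over dyadic cubes. Using the partition property,
\begin{align*}
 \|A_t u\|_\HH^2 = \sum_{Q \in \Delta_t} \int_Q \Big\lvert \fint_Q u(y) \, \d y \Big\rvert^2 \, \d x = \sum_{Q \in \Delta_t} \abs{Q} \Big\lvert \fint_Q u(y) \, \d y \Big\rvert^2.
\end{align*}
Applying Jensen's inequality (equivalently, Cauchy--Schwarz) cubewise yields
\begin{align*}
 \Big\lvert \fint_Q u(y) \, \d y \Big\rvert^2 \leq \fint_Q \abs{u(y)}^2 \, \d y = \frac{1}{\abs{Q}} \int_Q \abs{u(y)}^2 \, \d y,
\end{align*}
so the factors of $\abs{Q}$ cancel and the sum collapses back to $\|u\|_\HH^2$ by reassembling the partition.

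There is no real obstacle here; the argument is essentially the classical proof that conditional expectation onto a $\sigma$-algebra generated by a partition is an $L^2$-contraction. The only points requiring slight care are the measurability/well-definedness of $A_t u$ (handled by the dyadic decomposition) and the fact that the argument works verbatim in the vector-valued setting since the Euclidean norm on $\IC^N$ appears inside the integrals in the same way as in the scalar case.
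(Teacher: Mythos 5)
Your proof is correct and follows essentially the same route as the paper: split $\Omega$ (up to the nullset $\NN$) into the cubes of $\Delta_t$, apply Jensen's inequality on each cube, and resum to get $\|A_t u\|_\HH \leq \|u\|_\HH$. The additional remarks on measurability and the vector-valued setting are fine but not needed beyond what the paper's one-line argument already implies.
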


\begin{proof}
Split $\Omega \setminus \NN$ into the dyadic cubes $\Delta_t$ and apply Jensen's inequality to find
\begin{align*}
 \|A_t u \|^2 = \sum_{Q \in \Delta_t} \int_Q \abs{A_t u}^2 \; \d y = \sum_{Q \in \Delta_t} \abs{Q} \bigg|\fint_Q u \; \d y \bigg|^2 \leq \sum_{Q \in \Delta_t} \abs{Q} \fint_Q \abs{u}^2 \; \d y = \|u\|^2. &\qedhere
\end{align*}
\end{proof}

\begin{lemma}
 \label{Lem: Meanvalue estimates for the principle part}
Let $t \in ( 0 , 1 ]$. The operator $\gamma_t A_t: \HH \to \HH$ acting via $(\gamma_t A_t u)(x) = \gamma_t(x) (A_t u)(x)$ is bounded with operator norm uniformly bounded in $t$. Moreover,
\begin{align*}
\fint_Q \| \gamma_t (x)\|_{\Lop(\IC^N)}^2 \; \d x \lesssim 1 \qquad (Q \in \Delta_t)
\end{align*}
with an implicit constant independent of $t$.
\end{lemma}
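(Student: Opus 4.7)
My plan is to establish the mean-value estimate first and then derive $\L^2$-boundedness of $\gamma_t A_t$ from it almost automatically, following the pattern introduced by Axelsson--Keith--M\textsuperscript{c}Intosh.

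For the mean-value estimate, I would fix $Q \in \Delta_t$, a point $x_0 \in Q$, and a unit vector $w \in \IC^N$. By the very construction preceding Definition~\ref{Def: Principle part}, one has $\ind_Q \gamma_t w = \sum_{R \in \Delta_t} \ind_Q \Theta_t^B(\ind_R w)$ in $\L^2(Q)$. The idea is to apply Proposition~\ref{Prop: Off-diagonal estimates} termwise with some $M > d + 1$, invoke $\|\ind_R w\| \simeq t^{d/2}$ from Remark~\ref{Rem: Remark to dyadic decomposition}(1), and then sum over $R$ using Lemma~\ref{Lem: Sums related to off-diagonal estimates}(1). One small subtlety appears here: the latter lemma controls sums of the form $\sum_R \langle \dist(x,R)/t\rangle^{-M}$ for a single point $x$, whereas the off-diagonal estimate produces $\sum_R \langle \dist(Q,R)/t\rangle^{-M}$. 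This gap is closed by the triangle inequality and $\diam Q \leq C_1 t$, which yield $\langle \dist(x_0,R)/t \rangle \leq (1+C_1)\langle \dist(Q,R)/t\rangle$. Putting all this together produces $\int_Q |\gamma_t(x) w|^2 \, \d x \lesssim t^d \simeq |Q|$, uniformly in $Q$, $t \in (0,1]$, and unit $w$. Letting $w$ range over the standard basis $e_1,\ldots,e_N$ of $\IC^N$ and using the equivalence $\|A\|_{\Lop(\IC^N)}^2 \simeq \sum_{j=1}^N |Ae_j|^2$ on $\IC^{N\times N}$ gives the desired $\fint_Q \|\gamma_t\|_{\Lop(\IC^N)}^2 \lesssim 1$.

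The boundedness of $\gamma_t A_t$ is then essentially a formality. Since $A_t u$ is constant equal to $\fint_Q u$ on each $Q \in \Delta_t$, splitting the $\L^2$-norm into dyadic cubes and pulling the constants out of the integrals yields
\[
\|\gamma_t A_t u\|^2 = \sum_{Q \in \Delta_t} \Big|\fint_Q u\Big|^2 \int_Q \|\gamma_t(x)\|_{\Lop(\IC^N)}^2 \, \d x \lesssim \sum_{Q \in \Delta_t} |Q| \Big|\fint_Q u\Big|^2 = \|A_t u\|^2 \leq \|u\|^2,
\]
where the last inequality is Proposition~\ref{Prop: The dyadic averaging operator}.

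The main technical obstacle I anticipate lies in the first step. If $|\Omega| = \infty$ then a constant vector $w \in \IC^N$ does not belong to $\HH$, so $\Theta_t^B w$ has a meaning only as an element of $\L^2_{\loc}(\Omega;\IC^N)$ defined via the dyadic series, and one has to be careful that the termwise application of Proposition~\ref{Prop: Off-diagonal estimates} produces an absolutely convergent majorant of that series rather than a purely formal manipulation. The balance is delicate: the $t^{d/2}$ size of $\|\ind_R w\|$ is exactly offset by the summability over $R$ coming from Lemma~\ref{Lem: Sums related to off-diagonal estimates}(1), which requires $M > d+1$ and is the reason for the assumption $d$-set on $\Omega$ entering through the dyadic decomposition. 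Once this bookkeeping is in place, the rest of the argument is routine.
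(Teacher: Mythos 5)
Your proof is correct and takes essentially the same route as the paper: the mean-value estimate is obtained by decomposing over basis vectors and dyadic cubes $R$, applying Proposition~\ref{Prop: Off-diagonal estimates} together with $\abs{R}\simeq t^d \simeq \abs{Q}$ and the summation Lemma~\ref{Lem: Sums related to off-diagonal estimates} (your triangle-inequality adjustment between $\dist(x_0,R)$ and $\dist(Q,R)$ is exactly the implicit step there), and the boundedness of $\gamma_t A_t$ then follows by splitting over dyadic cubes, which is the ``straightforward'' deduction the paper alludes to. Only a cosmetic point: the first ``$=$'' in your boundedness display should be ``$\leq$'', since $\abs{\gamma_t(x)v}\leq \|\gamma_t(x)\|_{\Lop(\IC^N)}\abs{v}$.
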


\begin{proof} The first claim follows straightforwardly from the second one, cf.\ also \cite[Cor.\ 5.4]{Morris}. To prove the second claim fix $Q \in \Delta_t$. With $\{e_j\}_{j=1}^N$ the standard unit vectors in $\IC^N$,
\begin{align*}
 \bigg( \int_Q \| \gamma_t(x) \|_{\Lop(\IC^N)}^2 \; \d x \bigg)^{1/2}
&\lesssim \sum_{j = 1}^N \bigg( \int_Q \abs{\gamma_t(x)e_j}^2 \; \d x \bigg)^{1/2}
\leq \sum_{j = 1}^N \sum_{R \in \Delta_t} \bigg( \int_Q \lvert ( \Theta_t^B (\ind_R e_j )) (x) \rvert^2 \; \d x \bigg)^{1/2}.
\intertext{Proposition~\ref{Prop: Off-diagonal estimates}, item (i) of Remark~\ref{Rem: Remark to dyadic decomposition}, and Lemma \ref{Lem: Sums related to off-diagonal estimates} yield}
&\lesssim \sum_{j = 1}^N \sum_{R \in \Delta_t} \Big\langle \frac{\dist(R , Q)}{t} \Big\rangle^{-(d + 2)} \abs{Q}^{1/2}
\lesssim \abs{Q}^{1/2}
\end{align*}
uniformly in $t$.
\end{proof}

For $u \in \Rg(\g)$ integration over $t \in (0,1]$ on the left-hand side of \eqref{Eq: The quadratic estimate} is now split as
\begin{align}
\label{Eq: The remaining terms}
\begin{split}
\int_0^1 \| \Theta_t^B P_t u \|^2 \; \frac{\d t}{t} &\lesssim \int_0^1 \| (\Theta_t^B - \gamma_t A_t) P_t u \|^2 \; \frac{\d t}{t} \\
&\quad + \int _0^1 \| \gamma_t A_t (P_t - 1) u \|^2 \; \frac{\d t}{t} + \int_0^1 \int_\Omega \prmeas \abs{A_tu(x)}^2 \; \boxmeas.
\end{split}
\end{align}

The idea behind is to compensate the non-integrable singularity at $t=0$ as follows: In the first term $\Theta_t^B P_t u$ is compared with its averages over dyadic cubes. Letting $t \to 0$, the difference is expected to vanish since the diameter of the cubes used for the averaging shrinks to zero. In the second term $P_t$ is compared with the identity operator, which is the strong limit of $P_t$ as $t \to 0$. Finally, the third and most difficult term cries for a Carleson measure estimate. At the beginning of this section we have seen that it remains to bound each of the three terms on the right-hand side by a generic multiple of $\|u\|^2$. This will be done in the remaining sections.

\section{The Proof of Theorem~\ref{Thm: The pibe theorem}: Principal Part Approximation}
\label{Sec: Principal Part Approximation}

\noindent This section is concerned with estimating the first two terms on the right-hand side of \eqref{Eq: The remaining terms}. To start with, recall the classical Poincar\'{e} inequality as it can be deduced from Lemmas 7.12 and 7.16 in \cite{Gilbarg-Trudinger}. Throughout, $u_S:= \fint_S u \; \d x$ is the \emph{mean value} of an integrable function $u: S \to \IC^n$ over a set $S \subseteq \IR^d$ with Lebesgue measure $\abs{S} > 0$.

\begin{lemma}[Poincar\'{e} inequality]
\label{Lem: Convex Poincare inequality}
Let $\Xi \subseteq \IR^d$ be bounded and convex, and let $S$ be a Borel subset of $\Xi$ with $\abs{S} > 0$. Then
\begin{align*}
 \|u - u_S\|_{\L^2(\Xi; \IC)} \leq \frac{(\diam \Xi)^d \abs{B(0,1)}^{1-1/d} \abs{\Xi}^{1/d}}{\abs{S}} \|\nabla u\|_{\L^2(\Xi; \IC^d)} \qquad(u \in \H^1(\Xi; \IC)).
\end{align*}
\end{lemma}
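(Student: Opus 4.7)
The plan is to combine two classical results from \cite{Gilbarg-Trudinger}, exactly as the reference indicates. First, since $\Xi$ is bounded and convex, Lemma~7.16 of \cite{Gilbarg-Trudinger}, applied to the real and imaginary parts of $u$ separately, yields the pointwise estimate
\begin{align*}
\lvert u(x) - u_S \rvert \leq \frac{(\diam \Xi)^d}{d \lvert S \rvert} \int_\Xi \frac{\lvert \nabla u(y) \rvert}{\lvert x-y\rvert^{d-1}} \, \d y \qquad (\text{a.e.\ } x \in \Xi).
\end{align*}
The convexity of $\Xi$ is crucial: every point of $\Xi$ can be joined by a line segment inside $\Xi$ to every point of $S$, whence one obtains control by the Riesz-type kernel $\lvert x-y\rvert^{-(d-1)}$ applied to the gradient.

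Second, the right-hand side is essentially a Riesz potential of order $1$, so one applies Lemma~7.12 of \cite{Gilbarg-Trudinger} to the operator $V_\mu f(x) := \int_\Xi \lvert x-y\rvert^{d(\mu-1)} f(y) \, \d y$ with parameters $p=q=2$ and $\mu = 1/d$. With $\delta := 1/p - 1/q = 0 < 1-\mu$ the cited lemma delivers
\begin{align*}
\bigg\| \int_\Xi \frac{\lvert \nabla u(y)\rvert}{\lvert x - y\rvert^{d-1}} \, \d y \bigg\|_{\L^2(\Xi;\IC)} \leq \lvert B(0,1)\rvert^{1-1/d} \lvert \Xi \rvert^{1/d} \|\nabla u\|_{\L^2(\Xi;\IC^d)}.
\end{align*}
Taking $\L^2(\Xi;\IC)$ norms of the pointwise estimate above and inserting this Riesz potential bound produces the asserted inequality (in fact with an additional $1/d$ factor that the statement has chosen to absorb into a slightly weaker numerical constant).

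There is no real obstacle: the two lemmas of \cite{Gilbarg-Trudinger} were designed precisely to combine in this way, and the only things to check are that $u$ may be taken complex-valued (handled by splitting into real and imaginary parts and using $\lvert \nabla \Re u\rvert^2 + \lvert \nabla \Im u\rvert^2 = \lvert \nabla u\rvert^2$) and that the exponents in the Riesz potential bound match the power $-(d-1)$ of the kernel, which they do for $\mu = 1/d$. The borderline case $d=1$ is trivial since the kernel $\lvert x-y\rvert^{-(d-1)}$ reduces to the constant $1$ and the estimate then follows directly from the one-dimensional fundamental theorem of calculus together with H\"older's inequality.
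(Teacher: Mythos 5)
Your proposal is correct and is exactly the argument the paper intends -- the paper itself only cites Lemmas 7.12 and 7.16 of Gilbarg--Trudinger and your combination of the pointwise convexity estimate with the Riesz potential bound (for $p=q=2$, $\mu=1/d$, $\delta=0$) is that deduction, including the correct reduction of the complex-valued case and the harmless $d=1$ remark. One bookkeeping point: Lemma 7.12 actually yields the constant $\bigl(\tfrac{1-\delta}{\mu-\delta}\bigr)^{1-\delta}\abs{B(0,1)}^{1-\mu}\abs{\Xi}^{\mu-\delta}=d\,\abs{B(0,1)}^{1-1/d}\abs{\Xi}^{1/d}$, not the one you quote, so the factor $d$ cancels the $1/d$ coming from Lemma 7.16 and you land exactly on the constant in the statement rather than on one that is better by a factor $1/d$.
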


The following weighted Poincar\'{e} inequality is the key to handle the first term in \eqref{Eq: The remaining terms}.

\begin{proposition}[A weighted Poincar\'{e} inequality]
\label{Prop: Weighted Poincare inequality}
For each $M > 2d+2$ there exists $C_M > 0$ such that
\begin{align*}
 \int_{\IR^d} \abs{u(x)-u_Q}^2 \; \Big \langle \frac{\dist(x,Q)}{t} \Big \rangle^{-M} \d x \leq C_M \int_{\IR^d} \abs{t \nabla u(x)}^2 \;  \Big \langle \frac{\dist(x,Q)}{t} \Big \rangle^{2d+2-M} \d x
\end{align*}
holds for all $t \in (0,1]$, all $Q \in \Delta_t$, and all $u \in \H^1(\IR^d; \IC)$.
\end{proposition}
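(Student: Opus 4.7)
The plan is to reduce the weighted Poincar\'{e} inequality to the convex Poincar\'{e} inequality of Lemma~\ref{Lem: Convex Poincare inequality} via a dyadic shell decomposition around $Q$. Set $A_0 := \{x \in \IR^d : \dist(x,Q) \leq t\}$ and for $k \geq 1$
\begin{align*}
 A_k := \{x \in \IR^d : 2^{k-1} t < \dist(x,Q) \leq 2^k t\},
\end{align*}
so that $\IR^d = \bigsqcup_{k \geq 0} A_k$ and the weight satisfies $\langle \dist(x,Q)/t \rangle \simeq 2^k$ on $A_k$ (with the convention $2^0$ for $k=0$). The left-hand side of the claimed inequality is therefore comparable to $\sum_{k\geq 0} 2^{-kM} \int_{A_k} |u - u_Q|^2 \, \d x$.

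Next, for each $k$ I would fix a point $z \in Q$ and let $B_k := B(z, 3 C_1 \cdot 2^k t)$, where $C_1$ is the diameter constant from Theorem~\ref{Thm: Dyadic decomposition}. By property (4) of the dyadic decomposition, $B_k$ is convex (it is a ball), contains $Q$, and contains the shell $A_k$, while still $\diam B_k \simeq 2^k t$ and $|B_k| \simeq (2^k t)^d$. Since $Q$ is a measurable subset of $B_k$ with $|Q| \simeq t^d$ by Remark~\ref{Rem: Remark to dyadic decomposition}, Lemma~\ref{Lem: Convex Poincare inequality} applied to $B_k$ with the subset $Q$ yields
\begin{align*}
 \int_{A_k} |u - u_Q|^2 \, \d x \leq \int_{B_k} |u - u_Q|^2 \, \d x \lesssim \frac{(\diam B_k)^{2d}\, |B_k|^{2/d}}{|Q|^2} \int_{B_k} |\nabla u|^2 \, \d x \lesssim 2^{2k(d+1)} t^2 \int_{B_k} |\nabla u|^2 \, \d x.
\end{align*}
Multiplying by $2^{-kM}$ and using that $\langle \dist(x,Q)/t \rangle^{2d+2-M} \gtrsim 2^{k(2d+2-M)}$ on $B_k$ (since $2d+2-M<0$ and $\dist(x,Q) \lesssim 2^k t$ on $B_k$), I obtain
\begin{align*}
 2^{-kM} \int_{A_k} |u - u_Q|^2 \, \d x \lesssim 2^{k(2d+2-M)} \int_{B_k} |t \nabla u|^2 \, \d x.
\end{align*}

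Finally, I would interchange the order of summation. Since $B_k \subseteq \bigcup_{j \leq k+c} A_j$ for some absolute constant $c$, writing $\int_{B_k} |t \nabla u|^2 \leq \sum_{j \leq k+c} \int_{A_j} |t \nabla u|^2$ and swapping the sums gives
\begin{align*}
 \sum_{k \geq 0} 2^{k(2d+2-M)} \int_{B_k} |t \nabla u|^2 \, \d x \leq \sum_{j \geq 0} \bigg(\sum_{k \geq \max(j-c,0)} 2^{k(2d+2-M)}\bigg) \int_{A_j} |t \nabla u|^2 \, \d x \lesssim \sum_{j \geq 0} 2^{j(2d+2-M)} \int_{A_j} |t \nabla u|^2 \, \d x,
\end{align*}
where the geometric sum converges precisely because $M > 2d+2$. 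Using again that on $A_j$ the weight $\langle \dist(x,Q)/t \rangle^{2d+2-M}$ is comparable to $2^{j(2d+2-M)}$ recognizes the last sum as the right-hand side of the claimed estimate. The main obstacle is keeping track of the correct power of $2^k$ in the Poincar\'{e} constant: the factor $(\diam B_k)^d |B_k|^{1/d}/|Q|$ produces the ratio $(2^k t)^{d+1}/t^d$, whose square is $2^{2k(d+1)} t^2$, and this is exactly why the threshold $M > 2d+2$ is sharp for this argument.
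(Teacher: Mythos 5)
Your argument is correct and is essentially the paper's own proof in discrete form: both superpose the convex Poincar\'{e} inequality of Lemma~\ref{Lem: Convex Poincare inequality} over balls containing $Q$ at every scale $\gtrsim t$ with the decaying weight, the paper by rescaling $Q$ to unit size and integrating the resulting inequality over the radius against $r^{-M-1}\,\d r$, you by summing over dyadic annuli, with the same bookkeeping producing the exponent shift $2d+2$ and the same use of $M>2d+2$ for convergence. One cosmetic correction: since $Q\in\Delta_t$ only gives $\diam Q\leq C_1\delta^k\leq C_1\delta^{-1}t$ (not $\leq C_1 t$), the radius of $B_k$ should be, say, $3C_1\delta^{-1}2^k t$ to guarantee $Q\cup A_k\subseteq B_k$ for every admissible $\delta\in(0,1)$; the comparabilities $\diam B_k\simeq 2^k t$ and $\abs{B_k}\simeq (2^k t)^d$, and hence the rest of your argument, are unaffected.
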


\begin{proof}
Let $t \in (0,1]$ and $Q \in \Delta_t$. Fix some arbitrary $x_0 \in Q$, let $T$ be the affine transformation $x \mapsto x_0 - t^{-1}x$, and put $S:= T(Q)$. Upon replacing $u$ by $u \circ T^{-1}$ it suffices to prove 
\begin{align}
\label{Eq: Scaled weighted poincare}
 \int_{\IR^d} \abs{u(x)-u_S}^2 \; \langle \dist(x,S) \rangle^{-M} \d x \lesssim \int_{\IR^d} \abs{\nabla u(x)}^2 \;  \langle \dist(x,S)\rangle^{2d+2-M} \d x
\end{align}
for arbitrary $u \in \H^1(\IR^d; \IC)$ and an implicit constant independent of $t$, $Q$, and $u$.

Let $C_1$ and $\delta$ be given by Theorem~\ref{Thm: Dyadic decomposition}. Due to property \eqref{iv} of the dyadic decomposition, $S \subseteq B(0, C_1 \delta^{-1})$ and $\abs{S} \simeq 1$. Hence, for $r \geq C_1 \delta^{-1}$ Lemma~\ref{Lem: Convex Poincare inequality} applies with $\Xi = B(0,r)$ and $S$ as above yielding
\begin{align*}
 \int_{\IR^d} \abs{u(x) - u_S}^2 \ind_{B(0,r)}(x) \; \d x \lesssim r^{2d+2} \int_{\IR^d} \abs{\nabla u(x)}^2 \ind_{B(0,r)}(x) \; \d x
\end{align*}
with an implicit constant independent of $u$ and $r$. Integration with respect to $r^{-M-1} \d r$ gives
\begin{align*}
 \int_{\IR^d} \abs{u(x) - u_S}^2 \int_{C_1 \delta^{-1}}^\infty  \ind_{B(0,r)}(x) r^{-M-1} \; \d r \; \d x \lesssim \int_{\IR^d} \abs{\nabla u(x)}^2 \int_{C_1 \delta^{-1}}^\infty r^{2d+1-M} \ind_{B(0,r)}(x) \; \d r \; \d x.
\end{align*}
For fixed $x \in \IR^d$ the inner integrand becomes unequal to $0$ precisely when $r$ gets larger than $\max\{ \abs{x}, C_1 \delta^{-1} \}$ and it is straightforward to verify (draw a sketch!) that
\begin{align*}
 \frac{C_1 \delta^{-1}}{1+ C_1 \delta^{-1}} \cdot (1+\dist(x,S)) \leq \max\{ \abs{x}, C_1 \delta^{-1} \} \leq (1 + C_1 \delta^{-1})(1 +\dist(x,S)).
\end{align*}
Thus, \eqref{Eq: Scaled weighted poincare} follows from the previous estimate by a simple computation of the inner integrals.
\end{proof}

Now, we are in position to estimate the first term in \eqref{Eq: The remaining terms}.

\begin{proposition}[First term estimate]
\label{Prop: First termn estimate}
It holds
\begin{align*}
 \int_0^1 \|(\Theta_t^B - \gamma_t A_t)P_t u\|^2 \; \frac{\d t}{t} \lesssim \|u\|^2 \qquad (u \in \Rg(\g)).
\end{align*}
\end{proposition}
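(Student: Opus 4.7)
My strategy is to establish the pointwise-in-$t$ interpolated operator bound
\[
\|(\Theta_t^B - \gamma_t A_t) v\| \lesssim t^{\beta_2} \|v\|_{[\HH,\V^k]_{\beta_2}} \qquad (v \in [\HH,\V^k]_{\beta_2},\, t\in (0,1]),
\]
to then apply it with $v = P_t u$ and conclude via (H7) together with a standard quadratic estimate for the self-adjoint operator $\Pi^2$.

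The core ingredient is the $\V^k\to\HH$ endpoint bound $\|(\Theta_t^B - \gamma_t A_t) v\| \lesssim t\|v\|_{\V^k}$, which I would prove by a cube argument in the spirit of \cite{AKM-QuadraticEstimates}. Fix $t\in(0,1]$, $v\in\V^k$, and $Q\in\Delta_t$. Since $(v)_Q$ is constant on $Q$ and $\gamma_t(x)(v)_Q = \sum_{R\in\Delta_t}\Theta_t^B(\ind_R(v)_Q)(x)$ by the very definition of the principal part, on $Q$ one has
\[
(\Theta_t^B v - \gamma_t A_t v)(x) = \sum_{R \in \Delta_t} \Theta_t^B(\ind_R(v-(v)_Q))(x).
\]
I then apply the off-diagonal estimates of Proposition~\ref{Prop: Off-diagonal estimates} with $M > 3d+3$, Cauchy--Schwarz with weights $\langle d(Q,R)/t\rangle^{-M/2}$ (the Schur step, made legitimate by Lemma~\ref{Lem: Sums related to off-diagonal estimates}(1)), and pass to a weighted integral via $\langle d(R,Q)/t\rangle\simeq\langle d(x,Q)/t\rangle$ for $x\in R$. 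Extending $v$ componentwise to $\tilde v\in\H^1(\IR^d;\IC^N)$ via the extension property of Assumption~\ref{Ass: Geometric setup}($\V$) lets the weighted Poincar\'e inequality (Proposition~\ref{Prop: Weighted Poincare inequality}) convert $\int_\Omega|v-(v)_Q|^2\langle d(x,Q)/t\rangle^{-M}\d x$ into $t^2\int_{\IR^d}|\nabla\tilde v|^2\langle d(x,Q)/t\rangle^{2d+2-M}\d x$. Summing over $Q \in \Delta_t$ and applying Lemma~\ref{Lem: Sums related to off-diagonal estimates}(1) once more --- now with exponent $M-2d-2>d+1$ --- swaps the sum inside the integral, collapses the remaining weights to $O(1)$, and absorbs the extension operator norm into the implicit constant to yield the claimed $t\|v\|_{\V^k}$ bound.

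Complex interpolation of this endpoint with the trivial bound $\|(\Theta_t^B - \gamma_t A_t)\|_{\Lop(\HH)}\lesssim 1$ (from Lemma~\ref{Lem: Basic properties of resolvent operators} together with Lemma~\ref{Lem: Meanvalue estimates for the principle part}) delivers the displayed $t^{\beta_2}$ estimate. Specialising $v = P_t u$: by \eqref{Eq: Pt commutes with g} the function $P_t u$ lies in $\Rg(\g)\cap\dom(\Pi^2)$, so the second half of (H7) gives $\|P_t u\|_{[\HH,\V^k]_{\beta_2}}\lesssim\|(\Pi^2)^{\beta_2/2}P_t u\|$, and therefore
\[
\int_0^1 \|(\Theta_t^B - \gamma_t A_t) P_t u\|^2 \frac{\d t}{t} \lesssim \int_0^1 \|h_t(\Pi^2) u\|^2 \frac{\d t}{t}, \quad h_t(z) := \frac{(t^2 z)^{\beta_2/2}}{1+t^2 z}.
\]
The substitution $s = t\sqrt\lambda$ shows $\int_0^\infty|h_t(\lambda)|^2\d t/t$ is a finite constant independent of $\lambda\geq 0$, so the spectral theorem for the self-adjoint operator $\Pi^2$ (cf.\ Remark~\ref{Rem: Self-adjoint operators have quadratic estimates}) bounds the right-hand side by $\|u\|^2$.

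The main technical obstacle is the careful bookkeeping of the exponent $M$ in the cube argument: it must be large enough that \emph{both} the Schur summation over $R\in\Delta_t$ \emph{and} the post-Poincar\'e summation over $Q\in\Delta_t$ converge uniformly in $t\in(0,1]$, which is what forces the choice $M>3d+3$. Once that endpoint estimate is in place, the interpolation and the functional-calculus conclusion are routine.
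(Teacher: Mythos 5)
Your proposal is correct and follows essentially the same route as the paper: the same dyadic cube decomposition with off-diagonal estimates and the weighted Poincar\'e inequality applied to an extension of $v$, giving the endpoint bound $t\|v\|_{\V^k}$, then complex interpolation against the uniform $\Lop(\HH)$ bound, and (H7) applied to $v = P_t u$ via \eqref{Eq: Pt commutes with g}. The only cosmetic deviation is the final step, where you evaluate $\int_0^\infty \abs{h_t(\lambda)}^2 \, \d t/t$ directly through the spectral theorem for the self-adjoint $\Pi^2$ rather than the Schur-type estimate of Remark~\ref{Rem: Self-adjoint operators have quadratic estimates}; both arguments are valid and equivalent here.
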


\begin{proof}
We first inspect the integrand $\|(\Theta_t^B - \gamma_t A_t)v\|^2$ for arbitrary $t \in (0,1]$ and $v \in \V^{k}$. Split $\Omega$ into dyadic cubes $Q \in \Delta_t$ and decompose $v = \sum_{R \in \Delta_t} \ind_R v$ to find by the definitions of the principal part and the dyadic averaging operator
\begin{align*}
 \|(\Theta_t^B - \gamma_t A_t)v\|^2 
&= \sum_{Q \in \Delta_t} \Big \| \sum_{R \in \Delta_t} \ind_Q \Theta_t^B(\ind_R v- \ind_R v_Q) \Big \|^2. \\
\intertext{Off-diagonal estimates as in Proposition~\ref{Prop: Off-diagonal estimates} yield}
&\lesssim \sum_{Q \in \Delta_t} \bigg \{\sum_{R \in \Delta_t} \Big \langle \frac{\dist(R,Q)}{t} \Big \rangle^{-3d-4} \|\ind_R (v - v_Q)\| \bigg\}^2 \\
\intertext{and by the Cauchy-Schwarz inequality and Lemma~\ref{Lem: Sums related to off-diagonal estimates},}
&\lesssim \sum_{Q \in \Delta_t} \sum_{R \in \Delta_t} \Big \langle \frac{\dist(R,Q)}{t} \Big \rangle^{-3d-4} \|\ind_R(v - v_Q)\|^2.
\intertext{If $Q, R \in \Delta_t$ and $x \in R$ then $\dist(x,Q) \leq \dist(R,Q) + C_1 \delta^{-1} t$ as follows immediately from property \eqref{iv} of the dyadic decomposition. Consequently,}
&\lesssim \sum_{Q \in \Delta_t} \sum_{R \in \Delta_t} \int_R \abs{v(x) - v_Q}^2 \Big \langle \frac{\dist(x,Q)}{t} \Big \rangle^{-3d-4} \; \d x \\
& = \sum_{Q \in \Delta_t} \int_{\Omega} \abs{v(x) - v_Q}^2 \Big \langle \frac{\dist(x,Q)}{t} \Big \rangle^{-3d-4} \; \d x.
\intertext{Now, use \eqref{V} of Assumption~\ref{Ass: Geometric setup} coordinatewise to construct an extension $\E v \in \H^1(\IR^d; \IC^m)^{k}$ of $v$ to which Proposition~\ref{Prop: Weighted Poincare inequality} applies coordinatewise. Switching sum and integral then leads to}
&\leq \int_{\IR^d} \abs{t \nabla (\E v)(x)}^2 \sum_{Q \in \Delta_t} \Big \langle \frac{\dist(x,Q)}{t} \Big \rangle^{-d-2} \; \d x
\lesssim t^2 \|v\|_{\V^{k}}^2,
\end{align*}
the second step being due to Lemma~\ref{Lem: Sums related to off-diagonal estimates} and the boundedness of $\E: \V^{k} \to \H^1(\IR^d; \IC^m)^{k}$.

On the other hand, Lemmas~\ref{Lem: Basic properties of resolvent operators} and \ref{Lem: Meanvalue estimates for the principle part} bound $\|\Theta_t^B - \gamma_t A_t \|_{\Lop(\HH)}$ uniformly in $t \in (0,1]$. Invoking \eqref{H7}, complex interpolation with the previous estimate yields
\begin{align*}
 \|(\Theta_t^B - \gamma_t A_t)v\|^2 \lesssim t^{2\beta_2} \|v\|_{[\HH, \V^k]_{\beta_2}}^2 \lesssim \|(t^2 \Pi^2)^{\beta_2/2}v\|^2
\end{align*}
for all $v \in \Rg(\g) \cap \dom(\Pi^2)$ and all $t \in (0,1]$. In particular, if $u \in \Rg(\g)$, then due to \eqref{Eq: Pt commutes with g} the previous estimate applies to $v = P_t u$. Hence,
\begin{align*}
 \int_0^1 \|(\Theta_t^B - \gamma_t A_t) P_t u\|^2 \; \frac{\d t}{t} \lesssim \int_0^1 \|(t^2 \Pi^2)^{\beta_2/2}P_t u\|^2 \; \frac{\d t}{t} = \int_0^1 \|\Phi_t(\Pi) u\|^2 \; \frac{\d t}{t}
\end{align*}
with regularly decaying holomorphic functions $\Phi_t: = (t^2 \z^2)^{\beta_2/2} (1+ t^2 \z^2)^{-1}$. Now the conclusion follows by the Schur estimate presented in Remark~\ref{Rem: Self-adjoint operators have quadratic estimates}: Indeed, as in the proof of Proposition~\ref{Prop: Quadratic estimate implies bounded Hinfty calculus} a direct estimate yields some $\zeta \in \L^1(0,\infty; \d r/ r)$ such that $\|\Phi_t(\Pi)Q_s\|_{\Lop(\HH)} \leq \zeta(t s^{-1})$ for all $s,t > 0$ and moreover $\cl{\Rg(\g)} \subseteq \cl{\Rg(\Pi)}$ holds by the unperturbed counterpart of \eqref{Eq: Kernel and range decompositions}.
\end{proof}

\begin{remark}
\label{Rem: Poincare is needed only on whole space}
In contrast to \cite{AKM} we do not require a weighted Poincar\'{e} inequality on $\Omega$ to handle the first term on the right-hand side of \eqref{Eq: The remaining terms}. This is a key observation in order to dispense with smooth local coordinate charts around $\bd \Omega$.
\end{remark}

We head toward the second term in \eqref{Eq: The remaining terms}. The key ingredient is the following interpolation inequality for the unperturbed operators $\g$, $\g^*$, and $\Pi$. The proof follows the one of \cite[Lem.~6]{AKM} line by line except that one invokes Corollary~\ref{Cor: Estimate for complete boundary strip} to estimate the measure of inner boundary strips of dyadic cubes. This results in an exponent $\eta$ as in Corollary~\ref{Cor: Estimate for complete boundary strip} instead of $\eta = 1$ in \cite[Lem.~6]{AKM}.

\begin{lemma}
\label{Lem: Palmenlemma}
If $\Upsilon$ is either of the operators $\g$, $\g^*$ or $\Pi$ then with $\eta >0$ given by Corollary~\ref{Cor: Estimate for complete boundary strip},
\begin{align*}
 \bigg| \fint_Q \Upsilon u \; \d x\bigg|^2 \lesssim \frac{1}{t^\eta} \bigg(\fint_Q \abs{u}^2 \; \d x\bigg)^{\eta/2} \bigg(\fint_Q \abs{\Upsilon u}^2 \; \d x\bigg)^{1 - \eta/2} + \fint_Q \abs{u}^2 \; \d x 
\end{align*}
holds for all $t \in (0,1]$, all $Q \in \Delta_t$, and all $u \in \dom(\Upsilon)$.
\end{lemma}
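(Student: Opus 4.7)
The plan is to prove the estimate for $\Upsilon=\g$ in detail and note that $\Upsilon=\g^*$ is analogous via Remark~\ref{Rem: Hypotheses remain valid under permutation}, while $\Upsilon=\Pi$ follows by the same cutoff argument applied to both $\g$ and $\g^*$ simultaneously. Fix $t\in(0,1]$, $Q\in\Delta_t$, $u\in\dom(\g)$, and abbreviate $a:=\fint_Q|u|^2$ and $b:=\fint_Q|\g u|^2$. For a parameter $s\in(0,1]$ to be chosen I will introduce a smooth cutoff $\varphi_s\in\C_c^\infty(\IR^d;\IR)$ with $\supp\varphi_s\subseteq Q$, $\varphi_s\equiv 1$ on $\{x\in Q:\dist(x,\IR^d\setminus Q)>st\}$, and $\|\nabla\varphi_s\|_\infty\lesssim(st)^{-1}$, noting that the inner set is non-empty for $st$ smaller than $\diam(Q)\simeq t$.

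The workhorse identity is the consequence of (H5)
\[
\int_Q \g u \;=\; \int_\Omega \g(\varphi_s u)\;-\;\int_\Omega c_{\varphi_s}u\;+\;\int_Q(1-\varphi_s)\g u,
\]
where $c_{\varphi_s}$ is the $\L^\infty$ multiplier representing $[\g,M_{\varphi_s}]$, satisfying $|c_{\varphi_s}|\lesssim|\nabla\varphi_s|\lesssim(st)^{-1}\ind_{\Sigma_s}$, with $\Sigma_s:=\{x\in Q:\dist(x,\IR^d\setminus Q)\leq st\}$. I would then bound the three terms as follows. For the first, $\varphi_s u$ has compact support in $Q$, which is contained in an open ball centered in $Q\subseteq\Omega$ of radius $\simeq t$ and measure $\simeq|Q|$, so (H6) yields $|\int_\Omega\g(\varphi_s u)|\lesssim|Q|^{1/2}\|\varphi_s u\|\leq|Q|\,a^{1/2}$. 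For the second, Corollary~\ref{Cor: Estimate for complete boundary strip} (using also Assumption~\ref{Ass: Geometric setup}~$(d{-}1)$) gives $|\Sigma_s|\lesssim s^\eta|Q|$, so Cauchy--Schwarz provides $|\int_\Omega c_{\varphi_s}u|\lesssim(st)^{-1}|\Sigma_s|^{1/2}\|\ind_Qu\|\lesssim t^{-1}s^{(\eta-2)/2}|Q|\,a^{1/2}$. The third term is handled by Cauchy--Schwarz directly against $\ind_{\Sigma_s}$, producing $s^{\eta/2}|Q|\,b^{1/2}$.

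Dividing by $|Q|$, squaring, and summing yields, for every admissible $s$,
\[
\Big|\fint_Q\g u\Big|^2 \;\lesssim\; a\;+\;\frac{s^{\eta-2}}{t^2}\,a\;+\;s^\eta b.
\]
Now I optimise in $s$. If $a\leq t^2 b$, then $s^2:=a/(t^2b)\in(0,1]$ balances the last two contributions, each becoming $t^{-\eta}a^{\eta/2}b^{1-\eta/2}$, which is the claimed bound. If instead $a>t^2b$, Jensen supplies the trivial bound $|\fint_Q\g u|^2\leq b$, and the inequality $b\leq b\cdot(a/(t^2b))^{\eta/2}=t^{-\eta}a^{\eta/2}b^{1-\eta/2}$ closes the case.

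For $\Upsilon=\g^*$ the same argument works verbatim after invoking Remark~\ref{Rem: Hypotheses remain valid under permutation} to obtain (H5) and (H6) for $\g^*$. For $\Upsilon=\Pi$, one replaces $\g(\varphi_s u)=\varphi_s\g u+c_{\varphi_s}u$ by $\Pi(\varphi_s u)=\varphi_s\Pi u+(c_{\varphi_s}^{(1)}+c_{\varphi_s}^{(2)})u$ (using (H5) for $\g$ and $\g^*$) and applies (H6) to $\g(\varphi_s u)$ and $\g^*(\varphi_s u)$ separately; the arithmetic is identical, with $b$ now interpreted as $\fint_Q|\Pi u|^2$. I expect the only genuine difficulty to be the geometric step: dealing with the \emph{full} Euclidean boundary strip $\Sigma_s$ rather than just its part facing $\Omega\setminus Q$. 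This is precisely what Corollary~\ref{Cor: Estimate for complete boundary strip} (built on Assumption~($d{-}1$)) supplies, and it is the only place where regularity of $\bd\Omega$ enters. Everything else is a cutoff-and-optimise calculation of a standard flavour.
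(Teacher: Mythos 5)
Your proposal is correct and takes essentially the same route as the paper's proof: a smooth cutoff adapted to a boundary strip of $Q$, the commutator identity from (H5), (H6) for the compactly supported piece, and the strip-measure bound of Corollary~\ref{Cor: Estimate for complete boundary strip}, with the strip thickness ultimately chosen as $\sqrt{a/b}$ (the paper fixes this optimal thickness $\tau=Y^{1/2}Z^{-1/2}$ at the outset, while you carry a free parameter $s$ and optimize at the end). The only point to tidy is the degenerate case $a=0$ or $b=0$, where your choice $s^2=a/(t^2b)$ is not admissible; it is settled by Jensen's inequality and letting $s\to 0$ (the paper instead uses the locality consequence of (H5)).
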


\begin{proposition}[Second term estimate]
\label{Prop: Second term estimate}
It holds
\begin{align*}
 \int_0^1 \|\gamma_tA_t(P_t - 1)u\|^2 \; \frac{\d t}{t} \lesssim \|u\|^2 \qquad (u \in \HH).
\end{align*}
\end{proposition}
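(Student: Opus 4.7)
The plan is to mirror the principal part approximation used for the first term (Proposition~\ref{Prop: First termn estimate}), with Palmenlemma (Lemma~\ref{Lem: Palmenlemma}) replacing the weighted Poincar\'e inequality. Note that $P_t = (1+t^2 \Pi^2)^{-1}$ is the \emph{unperturbed} resolvent, so $\Pi$ is self-adjoint and its spectral calculus is available. I first reduce to $u \in \cl{\Rg(\Pi)}$ since $(P_t - 1) u_K = 0$ for $u_K \in \Ke(\Pi)$.

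For $u \in \cl{\Rg(\Pi)}$, the identity $\Pi^2 = \g \g^* + \g^* \g$ (from $\g^2 = (\g^*)^2 = 0$) yields
\[
(1-P_t) u = t^2 \Pi^2 P_t u = t \g \Theta_t u + t \g^* \Theta_t^{*} u,
\]
where $\Theta_t u = t \g^* P_t u \in \dom(\g)$ and $\Theta_t^{*} u = t \g P_t u \in \dom(\g^*)$ are uniformly $\HH$-bounded by $\|u\|$. The next step is to apply Palmenlemma with $\Upsilon = \g$, $w = \Theta_t u$ to the first summand and with $\Upsilon = \g^*$, $w = \Theta_t^* u$ to the second. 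The orthogonality $\|\Pi^2 v\|^2 = \|\g \g^* v\|^2 + \|\g^* \g v\|^2$ (from $\Rg(\g) \perp \Rg(\g^*)$) provides the crucial bound $\|\g \Theta_t u\|, \|\g^* \Theta_t^{*} u\| \leq \|(1-P_t) u\|/t$. Summing $|Q| |\fint_Q \Upsilon w|^2$ over $Q \in \Delta_t$ via H\"older inside the sum, and invoking $\|\gamma_t A_t v\| \lesssim \|A_t v\|$ from Lemma~\ref{Lem: Meanvalue estimates for the principle part}, this yields the pointwise-in-$t$ estimate
\[
\|\gamma_t A_t (1-P_t) u\|^2 \lesssim \|u\|^{\eta} \|(1-P_t) u\|^{2-\eta} + t^2 \|u\|^2.
\]

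Integrating against $\d t/t$ over $(0,1]$, the $t^2 \|u\|^2$-contribution is trivially bounded by $\|u\|^2$. The first contribution, however, cannot be closed by a bare H\"older argument, because $\int_0^1 \|(1-P_t) u\|^2 \, \d t/t$ generally diverges (by the spectral theorem). Instead, I would close it via the Schur-type estimate of Remark~\ref{Rem: Self-adjoint operators have quadratic estimates}: using the Calder\'on reproducing formula $u = 2\int_0^\infty Q_s^2 u \, \d s/s$ valid for $u \in \cl{\Rg(\Pi)}$, the bound reduces to the kernel estimate $\|\gamma_t A_t(1-P_t) Q_s\|_{\Lop(\HH)} \lesssim \zeta(t/s)$ for some $\zeta \in \L^1(0,\infty; \d r/r)$.

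The main obstacle is precisely this Schur kernel bound, which splits into two regimes. For $t \leq s$, the spectral calculus for the self-adjoint $\Pi$ gives the clean decay $\|(1-P_t) Q_s\|_{\Lop(\HH)} \lesssim t/s$, and combining with the uniform boundedness of $\gamma_t A_t$ yields the required $\zeta$-control. For $s \leq t$ the factor $(1-P_t) Q_s$ provides no intrinsic decay, and the integrability must come from the dyadic averaging $A_t$ damping the oscillations of $Q_s$ at the finer scale $s \leq t$; this is established via the off-diagonal estimates of Proposition~\ref{Prop: Off-diagonal estimates} combined with a further application of Palmenlemma to the averaged integrands, ultimately producing an integrable kernel $\zeta(t/s)$.
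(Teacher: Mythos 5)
Your final argument coincides with the paper's: after reducing to $u \in \cl{\Rg(\Pi)}$ (and to $A_t(P_t-1)$ via $A_t^2=A_t$ and the uniform bound on $\gamma_t A_t$), the paper also invokes the Schur-type estimate of Remark~\ref{Rem: Self-adjoint operators have quadratic estimates} with the kernel bound $\|A_t(P_t-1)Q_s\|_{\Lop(\HH)} \lesssim \zeta(ts^{-1})$, $\zeta(r)=\min\{r, r^{-1}+r^{-\eta}\}$, proved exactly by your two regimes (functional-calculus decay for $t\le s$; Lemma~\ref{Lem: Palmenlemma} plus off-diagonal estimates for $s\le t$, as in \cite[Prop.~5]{AKM}). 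Your intermediate pointwise-in-$t$ estimate is a detour you correctly discard, so the proposal is correct and takes essentially the same route.
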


\begin{proof}
Since $A_t$ is a dyadic averaging operator, $A_t^2 = A_t$. Lemma~\ref{Lem: Meanvalue estimates for the principle part} bounds $\|\gamma_t A_t\|_{\Lop(\HH)}$ uniformly in $t \in (0,1]$ so that in fact it suffices to establish
\begin{align*}
 \int_0^1 \|A_t(P_t - 1)u\|^2 \; \frac{\d t}{t} \lesssim \|u\|^2 \qquad(u \in \HH).
\end{align*}
This is certainly true for $u \in \Ke(\Pi)$ since then $P_t u = u$ holds for all $t \in \IR$. Since $\Pi$ is bisectorial, $\HH = \Ke(\Pi) \oplus \cl{\Rg(\Pi)}$. Whence, it remains to consider $u \in \cl{\Rg(\Pi)}$. In this case the conclusion follows by the Schur estimate presented in Remark~\ref{Rem: Self-adjoint operators have quadratic estimates} applied to $T_t := A_t(P_t - 1)$ if $t \leq 1$ and $T_t := 0$ if $t > 1$, provided that we can find some $\zeta \in \L^1(0,\infty; \d r/r)$ such that
\begin{align*}
 \|A_t(P_t - 1)Q_s\|_{\Lop(\HH)} \lesssim  \zeta(ts^{-1}) \qquad (t \in (0,1],\, s>0).
\end{align*}

In fact one can choose $\zeta(r) := \min\{r , r^{-1} + r^{-\eta}\}$. We skip details, since the argument relying on Lemma~\ref{Lem: Palmenlemma}, Lemma \ref{Lem: Sums related to off-diagonal estimates}, and off-diagonal estimates for $P_s$ and $Q_s$ is the same as in \cite[Prop.\ ~5]{AKM}. Note that $\eta = 1$ in \cite{AKM} and that Proposition~\ref{Prop: Off-diagonal estimates} holds for the unperturbed operators $P_s$ and $Q_s$, since if $\{\g, B_1, B_2\}$ satisfies \eqref{H1} - \eqref{H7}, then so does $\{\g, \Id, \Id\}$.
\end{proof}

\section{The Proof of Theorem~\ref{Thm: The pibe theorem}: Principal Part Estimate}
\label{Sec: Principal part estimate}

\noindent After all it remains to estimate the last term in \eqref{Eq: The remaining terms} appropriately, that is to establish
\begin{align}
\label{Eq: Goal of principal part section}
 \int_0^1 \int_\Omega \|\gamma_t(x)\|_{\Lop(\IC^N)}^2 \abs{A_tu(x)}^2 \; \boxmeas \lesssim \|u\|^2 \qquad (u \in \Rg(\Gamma)).
\end{align}
The proof follows the usual strategy of reducing the problem to a Carleson measure estimate, which in turn is established by a $T(b)$ procedure, see e.g.\ \cite{Kato-Square-Root-Proof, AKM, AKM-QuadraticEstimates, Morris, Bandara}. However, since only the last two references deal with the case $\Omega \neq \IR^d$ but under different underlying hypotheses, we include a more detailed argument for our setup.

Recall the notion of a Carleson measure.

\begin{definition}
\label{Def: Carleson measure}
The \emph{Carleson box} $R_Q$ of $Q \in \Delta$ is the Borel set given by $R_Q:= Q \times (0,l(Q)]$. A positive Borel measure $\nu$ on $\Omega \times (0,1]$ satisfying \emph{Carleson's condition}
\begin{align*}
 \|\nu\|_{\mathcal{C}} := \sup_{Q \in \Delta} \frac{\nu(R_Q)}{\abs{Q}} < \infty
\end{align*}
is called \emph{dyadic Carleson measure} on $\Omega \times (0,1]$.
\end{definition}

The following dyadic version of Carleson's theorem can be found in \cite[Thm.\ 4.3]{Morris}.

\begin{theorem}[Carleson, Morris]
\label{Thm: Carleson theorem}
If $\nu$ is a dyadic Carleson measure on $\Omega \times (0,1]$, then
\begin{align*}
 \iint_{\Omega \times (0,1]} \abs{A_tu(x)}^2 \; \d \nu(x,t) \lesssim \|\nu\|_{\mathcal{C}} \|u\|^2 \qquad (u \in \HH).
\end{align*}
\end{theorem}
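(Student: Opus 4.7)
\medskip

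\noindent\textbf{Proof plan.}
The plan is to reduce the claim to an $L^2$ bound for a dyadic maximal function via a stopping-time / distribution function argument, exploiting the fact that the dyadic family $\Delta$ together with the step sizes $\{\delta^k\}_{k \in \IN_0}$ organizes $\{A_t\}_{t \in (0,1]}$ as a martingale-type family. Concretely, introduce the \emph{dyadic maximal function}
\begin{align*}
\mathcal{M}_d u(x) := \sup_{k \in \IN_0} \fint_{Q(x,\delta^k)} \abs{u} \qquad (x \in \Omega \setminus \NN).
\end{align*}
Since $A_t = A_{\delta^k}$ whenever $t \in (\delta^{k+1}, \delta^k]$, the pointwise bound $\abs{A_t u(x)} \leq \mathcal{M}_d u(x)$ is immediate. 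Because the cubes in $\Delta_{\delta^{k+1}}$ refine those in $\Delta_{\delta^k}$ by property~(2) of Theorem~\ref{Thm: Dyadic decomposition}, the averages $\{A_{\delta^k} u\}_{k \in \IN_0}$ form a martingale with respect to the filtration generated by $\Delta_{\delta^k}$, and Doob's $L^2$ maximal inequality yields $\|\mathcal{M}_d u\| \leq 2 \|u\|$.

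The next step is to transfer the Carleson condition on $\nu$ into control of the level sets of $(x,t) \mapsto \abs{A_t u(x)}$. Fix $\lambda > 0$ and set $E_\lambda := \{x \in \Omega : \mathcal{M}_d u(x) > \lambda\}$. Every $x \in E_\lambda$ lies in some $Q \in \Delta$ with $\fint_Q \abs{u} > \lambda$, and since the admissible scales are bounded above by $1$ and dyadic cubes nest, one can select a (unique) maximal such cube $Q^\lambda(x)$. By maximality and property~(2) of the dyadic decomposition, the collection $\{Q^\lambda_j\}_j$ obtained this way is pairwise disjoint with $E_\lambda = \bigsqcup_j Q^\lambda_j$. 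If $(x,t) \in \Omega \times (0,1]$ satisfies $\abs{A_t u(x)} > \lambda$, then $Q(x,t)$ is a dyadic cube with $\fint_{Q(x,t)}\abs{u}>\lambda$, hence $Q(x,t) \subseteq Q_j^\lambda$ for a unique $j$; in particular $x \in Q_j^\lambda$ and $t \leq l(Q(x,t)) \leq l(Q_j^\lambda)$, so $(x,t) \in R_{Q_j^\lambda}$. Applying Carleson's condition gives
\begin{align*}
\nu\big(\{(x,t) \in \Omega \times (0,1] : \abs{A_t u(x)} > \lambda\}\big) \leq \sum_j \nu(R_{Q_j^\lambda}) \leq \|\nu\|_\C \sum_j \abs{Q_j^\lambda} = \|\nu\|_\C \abs{E_\lambda}.
\end{align*}

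To conclude, the layer cake formula together with the previous displayed inequality and the $L^2$ bound for $\mathcal{M}_d$ gives
\begin{align*}
\iint_{\Omega \times (0,1]} \abs{A_tu(x)}^2 \; \d \nu(x,t)
= \int_0^\infty 2\lambda\, \nu\{\abs{A_t u} > \lambda\}\, \d \lambda
\leq 2 \|\nu\|_\C \int_0^\infty \lambda \abs{E_\lambda}\, \d \lambda
= \|\nu\|_\C \|\mathcal{M}_d u\|^2 \lesssim \|\nu\|_\C \|u\|^2,
\end{align*}
which is the desired estimate.

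The main potential obstacle is the selection of the family $\{Q_j^\lambda\}$: since $\Omega$ need not be bounded, one has to justify that a maximal dyadic cube above each $x \in E_\lambda$ truly exists. This is settled by the a priori scale cap $l(Q) \leq 1$ for $Q \in \Delta$ and the bound $\fint_Q \abs{u} \leq \abs{Q}^{-1/2}\|u\| \lesssim \delta^{-kd/2}\|u\|$ at scale $\delta^k$, which forces every ascending chain of admissible cubes to terminate. Everything else reduces to the martingale structure of $\{A_{\delta^k}\}_k$ afforded by Theorem~\ref{Thm: Dyadic decomposition}.
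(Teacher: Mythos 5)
Your argument is correct: the reduction of $\abs{A_tu}$ to the dyadic maximal function, the disjoint maximal stopping cubes whose Carleson boxes cover the level sets (legitimate here because the scales are capped at $1$ and property (2)/(3) of Theorem~\ref{Thm: Dyadic decomposition} give nesting), and the layer-cake plus Doob $\L^2$ bound together yield exactly the stated estimate. The paper does not prove this theorem itself but cites Morris \cite[Thm.\ 4.3]{Morrison}, whose proof is essentially this same maximal-function/stopping-time argument, so your proposal follows the same route as the cited source.
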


So, \eqref{Eq: Goal of principal part section} follows if $\prmeas \; \boxmeas$ is a Carleson measure on $\Omega \times (0,1]$ and it is this property of the principal part $\gamma_t$ we are going to establish in the following.

We begin by fixing $\sigma >0$; its value to be chosen later. Also, by compactness, we fix a finite set $\mathcal{F}$ in the boundary of the unit ball of $\Lop(\IC^N)$ such that the sets
\begin{align}
\label{Eq: Pizza slices of matrices}
 K_\nu := \Big \{ \nu' \in \Lop(\IC^N) \setminus \{0\}: \Big\| \frac{\nu'}{\|\nu'\|_{\Lop(\IC^N)}} - \nu \Big\|_{\Lop(\IC^N)} \leq \sigma \Big \} \qquad (\nu \in \mathcal{F})
\end{align}
cover $\Lop(\IC^N) \setminus \{0\}$. By a standard argument using the John-Nierenberg Lemma, the following proposition implies Carleson's condition for the measure $\prmeas \; \boxmeas$, cf.\ e.g.\ \cite[p.\ 906]{Morris}.

\begin{proposition}
\label{Prop: The real hard work}
There exist $\beta, \beta' >0$ such that for each $Q \in \Delta$ and for each $\nu \in \Lop(\IC^N)$ with $\|\nu\|_{\Lop(\IC^N)} = 1$, there is a collection $\{Q_k\}_k \subseteq \Delta$ of pairwise disjoint subcubes of $Q$ such that $\abs{E_{Q,\nu}} > \beta \abs{Q}$, where $E_{Q, \nu}:= Q \setminus \bigcup \{Q_k\}_k$, and such that
\begin{align}
\label{Eq: The real hard work 1}
 \iint_{\substack{(x,t) \in E_{Q,\nu}^*\\ \gamma_t(x) \in K_\nu}} \prmeas \; \boxmeas \leq \beta' \abs{Q},
\end{align}
where $E_{Q, \nu}^* := R_Q \setminus \bigcup \{R_{Q_k}\}_k$.
\end{proposition}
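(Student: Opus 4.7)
The plan is to carry out the classical $T(b)$/corona construction for Carleson measure generation from \cite{AKM-QuadraticEstimates, AKM}, adapted to the present non-smooth geometric framework. Fix $Q \in \Delta$ of sidelength $t_Q := l(Q)$ and a unit $\nu \in \Lop(\IC^N)$. By a finite covering argument (depending only on $N$, $\kappa_1$, $\kappa_2$, and $\sigma$) select a unit vector $w = w_\nu \in \IC^N$ such that
\[
|\mu w| \geq c > 0 \qquad (\mu \in K_\nu)
\]
for some absolute constant $c$, and such that $w$ is compatible with the accretivity of $B_2$ in the sense of \cite[Sec.\ 5.5]{AKM-QuadraticEstimates}. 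This reduces the proof to finitely many choices of $w$ per $\nu$ and allows $|\gamma_t(x) A_t f_Q^\nu(x)|$ to genuinely detect $\|\gamma_t(x)\|_{\Lop(\IC^N)}$ whenever $\gamma_t(x) \in K_\nu$.

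Introduce a smooth cutoff $\eta_Q \in \C_c^\infty(\IR^d;\IC)$ with $0 \leq \eta_Q \leq 1$, $\eta_Q \equiv 1$ on a fixed dilate of $Q$, $\supp \eta_Q$ contained in a moderate neighborhood of $Q$, and $\|\nabla \eta_Q\|_\infty \lesssim 1/t_Q$. Assumption~\ref{Ass: Geometric setup}~($\V$) places $\eta_Q w$ in $\V^k \subseteq \dom(\g) \cap \dom(\pb)$, so
\[
f_Q^\nu := R_{\eps t_Q}^B(\eta_Q w)
\]
is well defined, with $\eps > 0$ a small parameter to be chosen in terms of $\sigma$. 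Using the identity $f_Q^\nu - \eta_Q w = -\i \eps t_Q R_{\eps t_Q}^B \pb (\eta_Q w)$, the commutator bound (H5) (which forces $\g(\eta_Q w)$ and $\gbs(\eta_Q w)$ to be multiplication operators supported on $\supp \nabla \eta_Q$ with $\L^\infty$-norm $\lesssim 1/t_Q$), and Lemma~\ref{Lem: Basic properties of resolvent operators}, we obtain: (a) $\|f_Q^\nu - \eta_Q w\|^2 \lesssim \eps^2 |Q|$, so in particular $\fint_Q |A_{t_Q} f_Q^\nu - w|^2 \lesssim \eps^2$; (b) $\|f_Q^\nu\|^2 \lesssim |Q|$ globally; (c) the local energy bound
\[
\iint_{R_Q} \|\Theta_t^B f_Q^\nu\|^2 \; \boxmeas \lesssim \eps^{-2} |Q|,
\]
obtained from $\|\Theta_t^B f_Q^\nu\| \lesssim \|Q_t^B f_Q^\nu\| = \|tP_t^B \pb f_Q^\nu\|$ together with $\|\pb f_Q^\nu\| \lesssim \eps^{-1} t_Q^{-1} \sqrt{|Q|}$.

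Let $\{Q_k\}$ be the maximal dyadic subcubes of $Q$ on which $\fint_{Q_k} |f_Q^\nu - w|^2 > \sigma_1$ or $\fint_{Q_k} |f_Q^\nu|^2 > \sigma_2^{-1}$, with $\sigma_1, \sigma_2$ small universal constants. Choosing $\eps$ suitably small in terms of $\sigma_1$, Chebyshev together with (a) and (b) yields $|\bigcup_k Q_k| \leq (1-\beta)|Q|$ for some $\beta > 0$, hence $|E_{Q,\nu}| > \beta |Q|$. Maximality forces $|A_t f_Q^\nu(x) - w| \leq 2\sqrt{\sigma_1}$ on $E_{Q,\nu}^*$, so $|A_t f_Q^\nu(x)| \geq \tfrac{1}{2}$. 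For $(x,t) \in E_{Q,\nu}^*$ with $\gamma_t(x) \in K_\nu$, the choice of $w$ and \eqref{Eq: Pizza slices of matrices} then give, for $\sigma$ small enough depending on $c$,
\[
\|\gamma_t(x)\|_{\Lop(\IC^N)} \lesssim |\gamma_t(x) A_t f_Q^\nu(x)|.
\]
Splitting $|\gamma_t A_t f_Q^\nu| \leq |\Theta_t^B f_Q^\nu| + |(\gamma_t A_t - \Theta_t^B) f_Q^\nu|$ and integrating over $E_{Q,\nu}^* \cap \{\gamma_t \in K_\nu\} \subseteq R_Q$, the first contribution is $\lesssim |Q|$ by (c), and the second is $\lesssim |Q|$ by the principal-part approximation underlying Proposition~\ref{Prop: First termn estimate} applied to $f_Q^\nu$ on $R_Q$, together with (b) and the bound $\|\eta_Q w\|_{\V^k} \lesssim t_Q^{-1}\sqrt{|Q|}$. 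This proves \eqref{Eq: The real hard work 1}.

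The main technical obstacle is executing the local principal-part approximation over the box $R_Q$ in this rough geometric setting: in contrast to \cite{AKM} we do not dispose of smooth coordinate charts near $\bd \Omega$ to flatten the boundary, so each use of the cutoff gradient bound $\|\nabla \eta_Q\|_\infty \lesssim 1/t_Q$ and of the weighted Poincar\'e inequality of Proposition~\ref{Prop: Weighted Poincare inequality} must be coupled with the stability structure (H5), the complete boundary strip estimate of Corollary~\ref{Cor: Estimate for complete boundary strip}, and the off-diagonal decay of Proposition~\ref{Prop: Off-diagonal estimates}, in order to ensure that contributions originating near the rough part of $\bd \Omega$ do not destroy the local energy bound or the closeness of $A_t f_Q^\nu$ to $w$ on $E_{Q,\nu}^*$.
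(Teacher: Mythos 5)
Your overall architecture (a test function adapted to $\nu$, Tb-type estimates, a stopping-time selection of $\{Q_k\}_k$, a pointwise lower bound for $\|\gamma_t(x)\|_{\Lop(\IC^N)}$ on the good set, and the final split into $\Theta_t^B f$ and $(\Theta_t^B-\gamma_t A_t)f$) is the same as the paper's, but the construction of your test function has a genuine gap. You set $f_Q^\nu=R_{\eps t_Q}^B(\eta_Q w)$ with a smooth cutoff and invoke the identity $f_Q^\nu-\eta_Q w=-\i\eps t_Q R_{\eps t_Q}^B\pb(\eta_Q w)$ together with (H5) to get the crucial smallness (a), $\|f_Q^\nu-\eta_Q w\|^2\lesssim\eps^2\abs{Q}$. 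This requires $\eta_Q w\in\dom(\pb)=\dom(\g)\cap\dom(\gbs)$, which fails: Assumption ($\V$) only gives $\varphi\V\subseteq\V$, while $\eta_Q w$ is a cutoff times a \emph{constant}, and constants need not lie in $\V$ (take $\V=\H^1_0(\Omega;\IC^m)$ and $Q$ near $\bd\Omega$, or $\Omega$ unbounded), so $\eta_Q w\notin\dom(\g)$ in general; and $\eta_Q w\in\dom(\gbs)$ would require $B_2(\eta_Q w)\in\dom(\g^*)$, hopeless for merely bounded measurable $B_2$. Likewise (H5) controls $[\g,M_\varphi]$ only on $\dom(\g)$ and does not make $\g(\eta_Q w)$ or $\gbs(\eta_Q w)$ a multiplication supported on $\supp\nabla\eta_Q$; constants are not locally annihilated by $\g$ or $\gbs$ in this framework (in the application $\g$ contains an identity block). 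The paper avoids exactly this by using the non-smooth test function $f_{Q,\eps}^\omega=(1-\eps\tau\i\g R_{\eps\tau}^B)\ind_{2Q}\omega=(1+\eps\tau\i\gbs)R_{\eps\tau}^B\ind_{2Q}\omega$, an identity between bounded operators, with $\omega=\nu^*\hat\omega$, so no domain issue arises. Moreover, $\L^2$-closeness of order $\eps$ is more than the rough geometry delivers and more than is needed: the paper only proves the averaged bound $\bigl|\fint_Q f_{Q,\eps}^\omega-\omega\bigr|^2\lesssim\eps^\eta+\eps^2$ via Lemma~\ref{Lem: Palmenlemma}, which is precisely where (H5), (H6) and Corollary~\ref{Cor: Estimate for complete boundary strip} enter, and then runs the stopping time on $\Re\langle\omega,\fint_{Q'}f\rangle$ and $\fint_{Q'}\abs{f}$ as in \cite[Lem.\ 5.11]{AKM-QuadraticEstimates}. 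Since your Chebyshev selection and the lower bound $\abs{A_tf_Q^\nu}\geq\tfrac12$ both rest on the unproven estimate (a), the selection of $\{Q_k\}_k$ with $\abs{E_{Q,\nu}}>\beta\abs{Q}$ does not follow as written.

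A second, related gap is the last step: you bound $\iint_{R_Q}\abs{(\Theta_t^B-\gamma_tA_t)f_Q^\nu}^2\,\boxmeas$ by ``the principal-part approximation applied to $f_Q^\nu$'' using $\|\eta_Q w\|_{\V^k}\lesssim t_Q^{-1}\abs{Q}^{1/2}$; but $\eta_Q w\notin\V^k$ in general (same boundary-condition obstruction), and Proposition~\ref{Prop: First termn estimate} concerns $(\Theta_t^B-\gamma_tA_t)P_tu$ for $u\in\Rg(\g)$, not arbitrary inputs. The paper closes this by decomposing $f_Q^\omega=\ind_{2Q}\omega-u$ with $u:=\eps\tau\i\g R_{\eps\tau}^B\ind_{2Q}\omega\in\Rg(\g)$, $\|u\|^2\lesssim\abs{Q}$: the indicator part is handled with off-diagonal estimates and Lemma~\ref{Lem: Sums related to off-diagonal estimates} (using $A_t\ind_{2Q}\omega=\omega$ on $Q$), and the $u$-part with \eqref{Eq: Abstract setting quadratic estimate} together with Propositions~\ref{Prop: First termn estimate} and~\ref{Prop: Second term estimate}. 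Your closing paragraph names the right tools for the rough boundary, but the argument that actually uses them (the Palmenlemma-type mean-value estimate and the above decomposition) is the missing core of the proof.
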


Hence, our task is to prove Proposition~\ref{Prop: The real hard work}. We closely follow \cite[pp.\ ~23-26]{AKM-QuadraticEstimates}. For the proof keep $Q \in \Delta$ and $\nu \in \Lop(\IC^N)$ with $\|\nu\|_{\Lop(\IC^N)} =1$ fixed and put $\tau:= l(Q)$ for brevity. Define the dilated cube $2Q: = \{x \in \IR^d: \dist(x,Q) \leq l(Q)\}$. Since the adjoint matrix $\nu^* \in \Lop(\IC^N)$ has norm $1$ there are $\omega, \hat{\omega} \in \IC^N$ such that
\begin{align}
\label{Eq: Properties of omega vectors}
 \abs{\omega} = \abs{\hat{\omega}} = 1 \quad \text{and} \quad \omega = \nu^* \hat{\omega}.
\end{align}
We prepare for the usual $T(b)$ argument but similar to \cite[Sec.\ 3.6]{Auscher-Rosen-Rule} we use $\ind_{2Q} \omega$ as a test function rather than some smoothened version of it. This leads to a simplification of the argument compared to \cite[Sec.\ 4.4]{AKM}. In the subsequent estimates a constant is called \emph{admissible} if it neither depends on the quantities fixed above nor on $\sigma$ its value still to be chosen. For $\eps > 0$ we then put
\begin{align}
\label{Eq: fQeps}
 f_{Q, \eps}^\omega := (1-\eps \tau \i \g R_{\eps \tau}^B) \ind_{2Q} \omega = \ind_{2Q} \omega - \eps \tau \i \g(1+\eps \tau \i \pb)^{-1} \ind_{2Q} \omega = (1 + \eps \tau \i \gbs)R_{\eps \tau}^B \ind_{2Q} \omega
\end{align}
and derive the following estimates.

\begin{lemma}
\label{Lem: Tb estimates}
There exist admissible constants $A_1, A_2, A_3 > 0$ such that for all $\eps>0$ it holds
\begin{align*}
 \|f_{Q,\eps}^\omega\| \leq A_1 \abs{Q}^{1/2}, \quad \iint_{R_Q} \abs{\Theta_t^B f_{Q,\eps}^\omega(x)}^2 \; \boxmeas \leq \frac{A_2}{\eps^2} \abs{Q},  \quad \bigg| \fint_Q f_{Q,\eps}^\omega(x) \; \d x- \omega \bigg|^2 \leq A_3(\eps^\eta + \eps^2).
\end{align*}
\end{lemma}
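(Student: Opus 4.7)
The plan is to prove the three estimates in order, all of which reduce to manipulations of $f_{Q,\eps}^\omega$ using the resolvent identity $\eps\tau\i\pb R_{\eps\tau}^B = 1 - R_{\eps\tau}^B$, the boundedness of the Hodge projections coming from \eqref{Eq: Hodge Decomposition}, and — for the last estimate — the interpolation Lemma~\ref{Lem: Palmenlemma}. Throughout I abbreviate $\tau := l(Q)$ and write $\P_2, \P_3$ for the bounded projections onto $\cl{\Rg(\gbs)}$ and $\cl{\Rg(\g)}$.

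For the first estimate I bound the two summands of $f_{Q,\eps}^\omega = \ind_{2Q}\omega - \eps\tau\i\g R_{\eps\tau}^B \ind_{2Q}\omega$ separately. The first contributes $\|\ind_{2Q}\omega\|^2 = |2Q\cap\Omega| \lesssim |Q|$ by the $d$-set condition. For the second, observe that $\pb R_{\eps\tau}^B \ind_{2Q}\omega = \g R_{\eps\tau}^B \ind_{2Q}\omega + \gbs R_{\eps\tau}^B \ind_{2Q}\omega$ is the Hodge decomposition of $\tfrac{1}{\eps\tau\i}(1-R_{\eps\tau}^B)\ind_{2Q}\omega$, so $\eps\tau\i\g R_{\eps\tau}^B \ind_{2Q}\omega = \P_3(1-R_{\eps\tau}^B)\ind_{2Q}\omega$ has norm $\lesssim |Q|^{1/2}$ by uniform boundedness of $R_{\eps\tau}^B$ and $\P_3$.

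For the second estimate I use the alternative form $f_{Q,\eps}^\omega = (1+\eps\tau\i\gbs)R_{\eps\tau}^B\ind_{2Q}\omega$, which lies in $\dom(\gbs)$ since $R_{\eps\tau}^B$ maps into $\dom(\pb)$. Because $\pb^2 \gbs = \gbs\pb^2$ (an algebraic consequence of $\g^2 = (\gbs)^2 = 0$, the latter following from (H3)), $P_t^B$ commutes with $\gbs$ on $\dom(\gbs)$, hence $\Theta_t^B f_{Q,\eps}^\omega = tP_t^B \gbs f_{Q,\eps}^\omega$. Nilpotence of $\gbs$ collapses this to $\Theta_t^B f_{Q,\eps}^\omega = tP_t^B \gbs R_{\eps\tau}^B \ind_{2Q}\omega$. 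The projection-argument of (1) with $\P_2$ in place of $\P_3$ yields $\|\gbs R_{\eps\tau}^B\ind_{2Q}\omega\| \lesssim |Q|^{1/2}/(\eps\tau)$, so $\|\Theta_t^B f_{Q,\eps}^\omega\| \lesssim t|Q|^{1/2}/(\eps\tau)$, and integration of its square against $\d t/t$ over $(0,\tau]$ produces the bound $\eps^{-2}|Q|$.

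For the third estimate the inclusion $Q \subseteq 2Q$ makes $\ind_{2Q}\omega \equiv \omega$ on $Q$, hence $\fint_Q f_{Q,\eps}^\omega - \omega = -\eps\tau\i \fint_Q \g u$ with $u := R_{\eps\tau}^B\ind_{2Q}\omega \in \dom(\g)$. I apply Lemma~\ref{Lem: Palmenlemma} to $\Upsilon = \g$ with $t = \tau \in (0,1]$. The averages on its right-hand side are controlled using the $d$-set condition and the projection bound of (1): $\fint_Q|u|^2 \lesssim 1$ and $\fint_Q|\g u|^2 \lesssim (\eps\tau)^{-2}$. Multiplying the resulting bound by $\eps^2\tau^2$ gives $\eps^2\tau^2|\fint_Q \g u|^2 \lesssim \eps^\eta + \eps^2\tau^2 \leq \eps^\eta + \eps^2$ since $\tau \leq 1$, which is (3).

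The only delicate point is the domain bookkeeping behind (2), namely that $\gbs$ commutes with $P_t^B$ on $\dom(\gbs)$ and that the $(\gbs)^2$-term vanishes even though $\gbs R_{\eps\tau}^B \ind_{2Q}\omega$ need not a priori lie in $\dom(\gbs)$. Both are standard in the AKM framework: closedness of $\gbs$ together with $\Rg(\gbs) \subseteq \Ke(\gbs)$ upgrades to $\cl{\Rg(\gbs)} \subseteq \Ke(\gbs)$, and this subspace is preserved by $P_t^B$ because the Hodge summands are $\pb^2$-invariant.
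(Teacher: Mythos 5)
Your proposal is correct and follows essentially the same route as the paper: the key resolvent bound $\|\g R_{\eps\tau}^B \ind_{2Q}\omega\| + \|\gbs R_{\eps\tau}^B \ind_{2Q}\omega\| \lesssim (\eps\tau)^{-1}\abs{Q}^{1/2}$ via the topological Hodge decomposition, the nilpotence/commutation identity $\Theta_t^B f_{Q,\eps}^\omega = t P_t^B \gbs R_{\eps\tau}^B \ind_{2Q}\omega$ for the Carleson-box term, and Lemma~\ref{Lem: Palmenlemma} with $\Upsilon=\g$, $t=\tau$ for the mean-value estimate are exactly the paper's steps. Your explicit use of the projections $\P_2,\P_3$ and the domain bookkeeping for $(\gbs)^2=0$ only spells out what the paper leaves implicit.
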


\begin{proof}
Note $\abs{2 Q} \leq (1 + C_1)^d l(Q)^d \lesssim \abs{Q}$ by property \eqref{iv} of the dyadic decomposition. Hence, \eqref{Eq: Kernel and range decompositions} and Lemma~\ref{Lem: Basic properties of resolvent operators} yield
\begin{align}
\label{Eq: Tb estimates 1}
\|\g R_{\eps \tau}^B \ind_{2Q} \omega\| + \|\gbs R_{\eps \tau}^B \ind_{2Q} \omega\| 
= (\eps \tau)^{-1} \|(1- R_{\eps \tau}^B) \ind_{2Q} \omega\|
\lesssim (\eps \tau)^{-1} \abs{Q}^{1/2}
\end{align}
with admissible implicit constants. From this, the first estimate follows. For the second estimate check by nilpotence of $\g$ and $\gbs$ that
\begin{align*}
 \Theta_t^B f_{Q,\eps}^\omega = t \gbs P_t^B (1+ \eps \tau \i \gbs)R_{\eps \tau}^B \ind_{2Q} \omega = t P_t^B \gbs R_{\eps \tau}^B \ind_{2Q} \omega.
\end{align*}
Recalling $l(Q) = \tau$, integration gives
\begin{align*}
\iint_{R_Q} \abs{\Theta_t^B f_{Q,\eps}^\omega(x)}^2 \; \boxmeas 
&\leq \int_0^\tau t \| P_t^B \gbs R_{\eps \tau}^B \ind_{2Q} \omega\|^2 \; \d t 
\lesssim \int_0^\tau t \|\gbs R_{\eps \tau}^B \ind_{2Q} \omega\|^2 \; \d t 
\end{align*}
and \eqref{Eq: Tb estimates 1} yields the claim. For the third estimate apply Lemma~\ref{Lem: Palmenlemma} with $\Upsilon = \g$ to find
\begin{align*}
 \bigg| \fint_Q f_{Q,\eps}^\omega \; \d x- \omega \bigg|^2 
&= \bigg| \fint_Q (f_{Q,\eps}^\omega - \ind_{2Q} \omega) \; \d x \bigg|^2
=  (\eps \tau)^2 \bigg| \fint_Q \g R_{\eps \tau}^B \ind_{2Q} \omega \; \d x \bigg|^2 \\
& \lesssim \frac{(\eps \tau)^2}{\tau^\eta} \bigg(\fint_Q \abs{R_{\eps \tau}^B \ind_{2Q} \omega}^2 \; \d x \bigg)^{\eta/2} \cdot \bigg( \fint_Q \abs{\g R_{\eps \tau}^B \ind_{2Q} \omega}^2 \; \d x\bigg)^{1 - \eta/2} \\
&\quad+ (\eps \tau)^2 \fint_Q \abs{R_{\eps \tau}^B \ind_{2Q} \omega}^2 \; \d x.
\intertext{By Lemma~\ref{Lem: Basic properties of resolvent operators} and \eqref{Eq: Tb estimates 1}, keeping in mind $\tau \leq 1$, it follows}
&\lesssim \frac{(\eps \tau)^2}{\tau^\eta} \cdot (\eps \tau)^{\eta - 2} + (\eps \tau)^2 \leq \eps^\eta + \eps^2. \qedhere
\end{align*}
\end{proof}

From now on keep $\eps > 0$ fixed as the solution of $A_3(\eps^\eta + \eps^2)= \frac{1}{2}$ with $A_3$ as in the preceding lemma. We shall simply write $f_Q^\omega$ instead of $f_{Q,\eps}^\omega$. Owing to Lemma~\ref{Lem: Tb estimates} and $\abs{\omega} =1$ we find
\begin{align}
\label{Eq: Mean value integral scalar product}
2 \Re \bigg\langle \omega, \fint_Q f_Q^\omega \; \d x\bigg \rangle = \bigg|\fint_Q f_Q^\omega \; \d x\bigg|^2 + \abs{\omega}^2 -\bigg| \fint_Q f_Q^\omega \; \d x- \omega \bigg|^2  \geq \frac{1}{2}.
\end{align}

The following lemma now follows literally as in \cite[Lem.\ 5.11]{AKM-QuadraticEstimates}.

\begin{lemma}
\label{Lem: Selection of the subcubes}
There exist admissible constants $\beta, \rho > 0$ and a collection $\{Q_k\}_k \subseteq \Delta$ of dyadic subcubes of $Q$ such that $\abs{E_{Q,\nu}} > \beta \abs{Q}$ where $E_{Q, \nu}:= Q \setminus \bigcup \{Q_k\}_k$, and such that
\begin{align}
\label{Eq: Selection of the subcubes 1}
 \Re \bigg \langle \omega, \fint_{Q'} f_Q^\omega(x) \; \d x \bigg \rangle \geq \rho \qquad \text{and} \qquad \fint_{Q'} \abs{f_Q^\omega(x)} \; \d x \leq \frac{1}{\rho}
\end{align}
for all dyadic subcubes $Q' \in \Delta$ of $Q$ which satisfy $R_{Q^{\prime}} \cap E_{Q , \nu}^* \neq \emptyset$, where $E_{Q,\nu}^* := R_Q \setminus \bigcup \{R_{Q_k}\}_k$.
\end{lemma}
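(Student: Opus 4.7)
The plan is a standard Calder\'on--Zygmund stopping-time decomposition of $Q$ adapted to the pseudo-accretive test function $f_Q^\omega$. The only ingredients that will enter are the $\L^2$- and $\L^1$-type bounds on $f_Q^\omega$ from Lemma~\ref{Lem: Tb estimates} together with the scalar product lower bound \eqref{Eq: Mean value integral scalar product}; no further geometric information about $\Omega$ is needed, so the argument will coincide verbatim with \cite[Lem.~5.11]{AKM-QuadraticEstimates}.

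First I would fix $\rho \in (0, \tfrac{1}{4})$ (to be chosen small at the end) and declare a dyadic subcube $Q' \subsetneq Q$ to be \emph{bad of type} (B1) if $\Re \langle \omega, \fint_{Q'} f_Q^\omega \rangle < \rho$, and \emph{bad of type} (B2) if $\fint_{Q'} \abs{f_Q^\omega} > \rho^{-1}$. Taking $\{Q_k\}_k$ to be the collection of dyadic subcubes of $Q$ that are maximal with respect to being bad of either type yields a pairwise disjoint family. By maximality, every dyadic $Q' \subseteq Q$ that is \emph{not} contained in any $Q_k$ passes both tests, which is exactly \eqref{Eq: Selection of the subcubes 1}. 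A brief combinatorial check then matches this class of cubes with those satisfying $R_{Q'} \cap E_{Q,\nu}^* \neq \emptyset$: if $Q' \subseteq Q_k$ then $R_{Q'} \subseteq R_{Q_k}$, whereas if $Q'$ is disjoint from or strictly contains all $Q_k$ then points $(x,t) \in R_{Q'}$ with $t$ larger than $\max_k l(Q_k \cap Q')$ lie in $E_{Q,\nu}^*$.

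The measure estimate $\abs{E_{Q,\nu}} > \beta \abs{Q}$ is the only piece of real content. Chebyshev combined with Cauchy--Schwarz and $\|f_Q^\omega\|_{\L^1(Q)} \leq A_1 \abs{Q}$ immediately gives $\sum_{(B2)} \abs{Q_k} \leq \rho A_1 \abs{Q}$. Starting from $\tfrac{1}{4}\abs{Q} \leq \Re \int_Q \langle \omega, f_Q^\omega \rangle$ via \eqref{Eq: Mean value integral scalar product} and splitting the integral over $E_{Q,\nu}$, over the (B1) cubes, and over the union of the (B2) cubes, one bounds the three contributions by $A_1 \abs{E_{Q,\nu}}^{1/2} \abs{Q}^{1/2}$, $\rho \abs{Q}$, and $A_1^{3/2} \rho^{1/2} \abs{Q}$ respectively, the last via Cauchy--Schwarz applied on the union of the (B2) cubes using the measure bound just obtained. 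The resulting inequality
$$\tfrac{1}{4} \abs{Q} \leq A_1 \abs{E_{Q,\nu}}^{1/2} \abs{Q}^{1/2} + \rho \abs{Q} + A_1^{3/2} \rho^{1/2} \abs{Q}$$
forces $\abs{E_{Q,\nu}} \geq \beta \abs{Q}$ with $\beta \simeq A_1^{-2}$ as soon as $\rho$ is chosen small enough that $\rho + A_1^{3/2} \rho^{1/2} \leq \tfrac{1}{8}$. Since $A_1$ is admissible, so are the resulting $\rho$ and $\beta$.

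The main obstacle is essentially absent: the non-trivial geometric and analytic input has already been absorbed into the construction of $f_Q^\omega$ and the verification of Lemma~\ref{Lem: Tb estimates}, and what remains is a bookkeeping exercise that has been standard in $T(b)$-theorems since \cite{Kato-Square-Root-Proof}.
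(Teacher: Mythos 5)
Your stopping-time construction (maximal dyadic subcubes bad for either the average lower bound or the $\L^1$ average bound), combined with the Chebyshev/Cauchy--Schwarz bookkeeping from Lemma~\ref{Lem: Tb estimates} and \eqref{Eq: Mean value integral scalar product}, is correct and is exactly the argument the paper invokes by citing \cite[Lem.\ 5.11]{AKM-QuadraticEstimates}. No substantive difference from the paper's route; just make sure $\rho$ is also taken small enough (e.g.\ $\rho\leq\min\{1/4,1/A_1\}$, which your final smallness condition essentially enforces) so that $Q$ itself is not a stopping cube.
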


Let $\rho$, $\{Q_k\}_k$, $E_{Q, \nu}$, and $E_{Q, \nu}^*$ be as provided by Lemma~\ref{Lem: Selection of the subcubes}. We shall prove the estimates in Proposition~\ref{Prop: The real hard work} for these choices. Eventually, we fix the value of $\sigma > 0$ determining the size of the `pizza slices' $K_\nu$ in \eqref{Eq: Pizza slices of matrices} as $\sigma:= \frac{\rho^2}{2}$. For the next lemma recall that $\NN$ is the exceptional set defined in Remark~\ref{Rem: Remark to dyadic decomposition}.

\begin{lemma}
\label{Lem: Lower bound for principal part}
Suppose $(x,t) \in E_{Q, \nu}^*$ is such that $x \notin \NN$ and $\gamma_t(x) \in K_\nu$. Then
\begin{align*}
 \big|\gamma_t(x)(A_t f_Q^\omega(x))\big| \geq \frac{\rho}{2} \|\gamma_t(x)\|_{\Lop(\IC^N)}.
\end{align*}
\end{lemma}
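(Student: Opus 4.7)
The plan is to identify a concrete dyadic cube $Q' \in \Delta$ for which Lemma~\ref{Lem: Selection of the subcubes} yields both a controlled lower bound for $\Re \langle \omega, \fint_{Q'} f_Q^\omega\rangle$ and an upper bound for $\fint_{Q'} \abs{f_Q^\omega}$, and then to transfer these to the quantity $\gamma_t(x)(A_t f_Q^\omega(x))$ using that $\gamma_t(x) \in K_\nu$ differs from a positive multiple of $\nu$ only by a controlled error.

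First I would take $Q' := Q(x,t)$, the unique cube in $\Delta_t$ containing $x$ (available because $x \notin \NN$). Since $(x,t) \in E_{Q,\nu}^* \subseteq R_Q$ we have $x \in Q$ and $0 < t \leq l(Q)$, so by the nesting property (2) of the dyadic decomposition $Q'$ is a dyadic subcube of $Q$, and by construction $l(Q') \geq t$, which places $(x,t) \in R_{Q'}$. Hence $R_{Q'} \cap E_{Q,\nu}^* \neq \emptyset$, so Lemma~\ref{Lem: Selection of the subcubes} applies to $Q'$. Writing $a := A_t f_Q^\omega(x) = \fint_{Q'} f_Q^\omega$, this yields
\begin{align*}
\Re \langle \omega, a \rangle \geq \rho \qquad \text{and} \qquad \abs{a} \leq \fint_{Q'} \abs{f_Q^\omega} \leq \frac{1}{\rho}.
\end{align*}

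Next, since $\gamma_t(x) \in K_\nu$ (and is in particular nonzero), the very definition~\eqref{Eq: Pizza slices of matrices} gives $\| \gamma_t(x) - \|\gamma_t(x)\|_{\Lop(\IC^N)} \nu \|_{\Lop(\IC^N)} \leq \sigma \|\gamma_t(x)\|_{\Lop(\IC^N)}$. Writing $\gamma_t(x) a = \|\gamma_t(x)\|_{\Lop(\IC^N)} \nu a + \bigl(\gamma_t(x) - \|\gamma_t(x)\|_{\Lop(\IC^N)}\nu\bigr) a$ and applying the reverse triangle inequality therefore gives
\begin{align*}
\abs{\gamma_t(x) a} \geq \|\gamma_t(x)\|_{\Lop(\IC^N)} \bigl(\abs{\nu a} - \sigma \abs{a}\bigr).
\end{align*}

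Finally, I would use the key relation $\omega = \nu^* \hat \omega$ with $\abs{\hat\omega}=1$ from \eqref{Eq: Properties of omega vectors}: this gives
\begin{align*}
\abs{\nu a} \geq \abs{\langle \hat\omega, \nu a\rangle} = \abs{\langle \nu^* \hat\omega, a\rangle} = \abs{\langle \omega, a\rangle} \geq \Re \langle \omega, a\rangle \geq \rho,
\end{align*}
while $\abs{a} \leq 1/\rho$ from the bound already established. Combining with $\sigma = \rho^2/2$, which was fixed precisely after Lemma~\ref{Lem: Selection of the subcubes}, yields
\begin{align*}
\abs{\gamma_t(x) a} \geq \|\gamma_t(x)\|_{\Lop(\IC^N)}\Bigl(\rho - \frac{\sigma}{\rho}\Bigr) = \frac{\rho}{2}\|\gamma_t(x)\|_{\Lop(\IC^N)},
\end{align*}
which is the claim. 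No step poses a real obstacle; the only point that requires care is the geometric fact that $(x,t) \in R_{Q(x,t)}$, which is what makes Lemma~\ref{Lem: Selection of the subcubes} applicable to the cube selected by the dyadic averaging operator.
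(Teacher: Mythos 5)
Your proposal is correct and follows essentially the same route as the paper: identify the unique cube $Q' \in \Delta_t$ containing $x$ (so that $R_{Q'} \cap E_{Q,\nu}^* \neq \emptyset$ and Lemma~\ref{Lem: Selection of the subcubes} applies to $A_t f_Q^\omega(x) = \fint_{Q'} f_Q^\omega$), pass from $\Re\langle\omega,\cdot\rangle$ to $\abs{\nu(\cdot)}$ via $\omega = \nu^*\hat\omega$, and absorb the deviation of $\gamma_t(x)$ from $\|\gamma_t(x)\|_{\Lop(\IC^N)}\nu$ using $\sigma = \rho^2/2$ and the bound $\abs{A_t f_Q^\omega(x)} \leq 1/\rho$. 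Your extra care in checking that $Q'$ is indeed a dyadic subcube of $Q$ with $(x,t) \in R_{Q'}$ only makes explicit what the paper leaves implicit.
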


\begin{proof}
Due to $x \notin \NN$ there exists a unique $Q' \in \Delta_t$ that contains $x$. Hence $R_{Q'} \cap E_{Q,\nu}^* \neq \emptyset$. Since by definition $A_t f_Q^\omega(x) = \fint_{Q'} f_Q^\omega(y) \; \d y$, the previous lemma and the relations between $\nu$, $\omega$, and $\hat{\omega}$, cf.\ \eqref{Eq: Properties of omega vectors}, yield
\begin{align*}
 \big|\nu (A_t f_Q^\omega(x))\big| \geq \Re \big \langle \hat{\omega}, \nu (A_t f_Q^\omega(x))\big\rangle = \Re \big\langle\omega, A_t f_Q^\omega(x)\big\rangle \geq \rho
\end{align*}
and furthermore -- due to $\gamma_t(x) \in K_\nu$ -- also
\begin{align*}
 \bigg| \frac{\gamma_t(x)}{\|\gamma_t(x)\|_{\Lop(\IC^N)}} (A_t f_Q^\omega(x))\bigg|
\geq \abs{\nu (A_t f_Q^\omega(x))} - \abs{A_t f_Q^\omega(x)} \cdot \bigg\|\frac{\gamma_t(x)}{\|\gamma_t(x)\|_{\Lop(\IC^N)}} - \nu\bigg\|_{\Lop(\IC^N)} \geq \frac{\rho}{2}. &\qedhere
\end{align*}
\end{proof}

Finally we complete the proof of Proposition~\ref{Prop: The real hard work}.

\begin{proof}[Proof of Proposition~\ref{Prop: The real hard work}]
 It remains to establish \eqref{Eq: The real hard work 1}. The crucial observation is that Lemma~\ref{Lem: Lower bound for principal part} allows to reintroduce the dyadic averaging operator:
\begin{align*}
 \iint_{\substack{(x,t) \in E_{Q,\nu}^*\\ \gamma_t(x) \in K_\nu}} \prmeas \; \boxmeas
&\lesssim \iint_{R_Q} \abs{\gamma_t(x)(A_tf_Q^\omega(x))}^2 \; \boxmeas \\
&\leq 2 \iint_{R_Q} \abs{\Theta_t^Bf_Q^\omega}^2 \; \boxmeas + 2 \iint_{R_Q} \abs{(\Theta_t^B - \gamma_tA_t)f_Q^\omega}^2 \; \boxmeas.
\end{align*}
Lemma~\ref{Lem: Tb estimates} bounds the first term on the right-hand side by $2 A_2 \eps^{-2} \abs{Q}$. To handle the second one put $u:= \eps \tau \i \g R_{\eps \tau}^B \ind_{2Q} \omega \in \Rg(\g)$. Then due to $f_Q^\omega = \ind_{2Q} \omega - u$, see \eqref{Eq: fQeps}, it remains to show
\begin{align}
\label{Eq: Final goal of hard work}
 \int_0^{\tau} \| \ind_Q(\Theta_t^B - \gamma_tA_t) \ind_{2Q} \omega\|^2 \; \frac{\d t}{t} +  \int_0^{\tau} \| \ind_Q(\Theta_t^B - \gamma_tA_t)u\|^2 \; \frac{\d t}{t} \lesssim \abs{Q}.
\end{align}

For the first term on the left-hand side note $A_t \ind_{2Q} \omega(x) = \omega$ for all $x \in Q$ and $t \in (0, \tau)$ so that by definition of the principal part
\begin{align*}
\|\ind_Q(\Theta_t^B \ind_{2Q} \omega - \gamma_t A_t \ind_{2Q} \omega)\| 
&\leq \sum_{R \in \Delta_{\tau}} \|\ind_Q \Theta_t^B ( \ind_{R \cap (\IR^d \setminus 2Q)} \omega )\|.
\intertext{Proposition \ref{Prop: Off-diagonal estimates} gives}
&\lesssim \sum_{R \in \Delta_\tau} \Big \langle \frac{\d(Q, R \cap (\IR^d \setminus 2Q))}{t} \Big \rangle^{-(d+2)} \|\ind_{R \cap (\IR^d \setminus 2Q)} \omega\|.
\end{align*}
Since dyadic cubes of the same step size are comparable in measure, we get for each $R \in \Delta_\tau$ that $\|\ind_{(\IR^d \setminus 2Q) \cap R} \omega\| \leq \abs{R}^{1/2} \simeq \abs{Q}^{1/2}$. Now, the latter sum is under control by the second part of Lemma~\ref{Lem: Sums related to off-diagonal estimates} with $l = 1$. Altogether,
\begin{align*}
\|\ind_Q(\Theta_t^B \ind_{2Q} \omega - \gamma_t A_t \ind_{2Q} \omega)\| 
\lesssim \abs{Q}^{1/2}  \frac{t^{d+2}}{\tau^{d+2}}.
\end{align*}
Going back to \eqref{Eq: Final goal of hard work}, this gives the required bound for the first term. The second one is bounded by
\begin{align*}
\int_0^1 \|\Theta_t^B(1-P_t)u\|^2 + \|(\Theta_t^B - \gamma_t A_t)P_t u\|^2 + \|\gamma_t A_t (P_t - 1)u\|^2 \; \frac{\d t}{t}
\end{align*}
and these three terms have already been taken care of in \eqref{Eq: Abstract setting quadratic estimate} and Propositions~\ref{Prop: First termn estimate} and \ref{Prop: Second term estimate} bounding them by a multiple of $\|u\|^2$. However, in view of \eqref{Eq: Tb estimates 1} we find $\|u\|^2 \lesssim \abs{Q}$. This completes the proof of Proposition \ref{Prop: The real hard work}. 
\end{proof}
\subsection*{Acknowledgments}
The authors want to thank P.\ Auscher for helping to improve this manuscript in several ways and J.\ Rozendaal for valuable discussions on functional calculi.
\begin{bibdiv}
\begin{biblist}
 
\bibitem{Kato-Square-Root-Proof}
\textsc{P.~Auscher}, \textsc{S.~Hofmann}, \textsc{M.~Lacey}, \textsc{A.~M\textsuperscript{c}Intosh}, and \textsc{
  P.~Tchamitchian}.
\newblock {\em The solution of the {K}ato square root problem for second order
  elliptic operators on {$\mathbb{R}^n$}\/}.
\newblock Ann. of Math. (2) \textbf{156} (2002), no.~2, 633--654.

\bibitem{Kato-Systems-Proof}
\textsc{P.~Auscher}, \textsc{S.~Hofmann}, \textsc{A.~M\textsuperscript{c}Intosh}, and \textsc{
  P.~Tchamitchian}.
\newblock {\em The {K}ato square root problem for higher order elliptic
  operators and systems on {$\mathbb{R}^n$}\/}.
\newblock J. Evol.~Equ. \textbf{1} (2001), no.~4, 361--385.

\bibitem{Auscher-Rosen-Rule}
\textsc{P.~Auscher}, \textsc{A.~Ros\'{e}n}, and \textsc{D.~Rule}.
\newblock{\em Boundary value problems for degenerate elliptic equations and systems\/}.
\newblock Available at \eprint{http://arxiv.org/abs/1307.5056}.

\bibitem{AT2}
\textsc{P.~Auscher} and \textsc{P.~Tchamitchian}.
\newblock {\em Square roots of elliptic second order divergence operators on strongly {L}ipschitz domains: {$L^2$} theory\/}.
\newblock J. Anal. Math. \textbf{90} (2003), 1--12.

\bibitem{AKM}
\textsc{A.~Axelsson}, \textsc{S.~Keith}, and \textsc{A.~M\textsuperscript{c}Intosh}.
\newblock {\em The {K}ato square root problem for mixed boundary value
  problems\/}.
\newblock J. London Math. Soc. (2) \textbf{74} (2006), no.~1, 113--130.

\bibitem{AKM-QuadraticEstimates}
\textsc{A.~Axelsson}, \textsc{S.~Keith}, and \textsc{A.~M\textsuperscript{c}Intosh}.
\newblock {\em Quadratic estimates and functional calculi of perturbed {D}irac
  operators\/}.
\newblock Invent. Math. \textbf{163} (2006), no.~3, 455--497.

\bibitem{Bandara2}
\textsc{L.~Bandara}.
\newblock {\em The {K}ato {S}quare {R}oot {P}roblem on vector boundles with
  generalized bounded geometry\/}.  J. Geom. Anal (2015).

\bibitem{Bandara}
\textsc{L.~Bandara}.
\newblock {\em Quadratic estimates for perturbed {D}irac type operators on
  doubling measure metric spaces\/}. In AMSI International Conference on Harmonic Analysis and Applications, Proc. Centre Math. Appl. Austral. Nat. Univ. \textbf{45} (2011), 1--21.

\bibitem{ChristDyadic}
\textsc{M.~Christ}.
\newblock {\em A {$T(b)$} theorem with remarks on analytic capacity and the
  {C}auchy integral\/}.
\newblock Colloq. Math. \textbf{60/61} (1990), no.~2, 601--628.

\bibitem{CowlingDustMcIntoshYagi}
\textsc{M.~Cowling}, \textsc{I.~Doust}, \textsc{A.~M\textsuperscript{c}Intosh}, and \textsc{A.~Yagi}.
\newblock {\em Banach space operators with a bounded {$\H^\infty$} functional
  calculus\/}.
\newblock J. Austral. Math. Soc. Ser. A \textbf{60} (1996), no.~1, 51--89.

\bibitem{Duelli-Thesis}
\textsc{M.~Duelli}.
\newblock{Functional {C}alculus for {B}isectorial {O}perators and {A}pplications to {L}inear and {N}on-{L}inear {E}volution {E}quations\/}.
\newblock Logos Verlag, Berlin, 2005.

\bibitem{Duelli-Weiss}
\textsc{M.~Duelli} and \textsc{L.~Weis}.
\newblock {\em Spectral projections, {R}iesz transforms and
  {$H^\infty$}-calculus for bisectorial operators\/}. In Nonlinear elliptic and parabolic problems, Progr.
  Nonlinear Differential Equations Appl., vol.~64, Birkh\"auser, Basel, 2005, 99--111.

\bibitem{Eigene_Diss}
\textsc{M.~Egert}.
\newblock On {K}ato's conjecture and mixed boundary conditions\/.
\newblock PhD thesis, TU Darmstadt, in preparation.

\bibitem{Darmstadt-KatoMixedBoundary}
\textsc{M.~Egert}, \textsc{R.~{Haller-Dintelmann}}, and \textsc{P.~Tolksdorf}.
\newblock {\em The {K}ato {S}quare {R}oot {P}roblem for mixed boundary
  conditions\/}. J. Funct. Anal. \textbf{267} (2014), no.~5, 1419--1461.

\bibitem{Fine-Properties-Of-Functions}
\textsc{L.~C. Evans} and \textsc{R.~F. Gariepy}.
\newblock Measure {T}heory and {F}ine {P}roperties of {F}unctions. Studies in Advanced Mathematics,
\newblock CRC Press, Boca Raton FL, 1992.

\bibitem{Gilbarg-Trudinger}
\textsc{D.~Gilbarg} and \textsc{N.~S. Trudinger}.
\newblock Elliptic {P}artial {D}ifferential {E}quations of {S}econd {O}rder. Classics in Mathematics,
\newblock Springer, Berlin, 2001.

\bibitem{Griepentrog-InterpolationOnGroger}
\textsc{J.~A. Griepentrog}, \textsc{K.~Gr{\"o}ger}, \textsc{H.-C. Kaiser}, and \textsc{
  J.~Rehberg}.
\newblock {\em Interpolation for function spaces related to mixed boundary
  value problems\/}.
\newblock Math. Nachr. \textbf{241} (2002), 110--120.

\bibitem{Haase}
\textsc{M.~Haase}.
\newblock The {F}unctional {C}alculus for {S}ectorial {O}perators. Operator Theory: Advances and Applications, vol.~169,
\newblock Birkh{\"a}user, Basel, 2006.

\bibitem{Hajlasz-Koskela-Tuominen}
\textsc{P.~Haj{\l}asz}, \textsc{P.~Koskela}, and \textsc{H.~Tuominen}.
\newblock {\em Sobolev embeddings, extensions and measure density condition\/}.
\newblock J. Funct. Anal. \textbf{254} (2008), no.~5, 1217--1234.

\bibitem{Jonsson-Wallin}
\textsc{A.~Jonsson} and \textsc{H.~Wallin}.
\newblock {\em Function spaces on subsets of {${\IR}^n$}\/}.
\newblock Math. Rep. \textbf{2} (1984), no.~1.

\bibitem{Kato}
\textsc{T.~Kato}.
\newblock Perturbation {T}heory for {L}inear {O}perators. Classics in Mathematics, 
\newblock Springer, Berlin, 1995.

\bibitem{Katos-Conjecture}
\textsc{T.~Kato}.
\newblock{\em Fractional powers of dissipative operators\/}.
\newblock J. Math. Soc. Japan \textbf{13} (1961), 246--274.

\bibitem{Counterexample_Lions}
\textsc{J.-L. Lions}.
\newblock {\em Espaces d'interpolation et domaines de puissances fractionnaires
  d'op\'erateurs\/}.
\newblock J. Math. Soc. Japan \textbf{14} (1962), 233--241.

\bibitem{KatoMultiplier}
\textsc{A.~M\textsuperscript{c}Intosh}.
\newblock {\em Square roots of elliptic operators\/}.
\newblock J. Funct. Anal. \textbf{61} (1985), no.~3, 307--327.

\bibitem{McIntosh-KatoSurvey}
\textsc{A.~M\textsuperscript{c}Intosh}.
\newblock {\em The square root problem for elliptic operators -- a survey\/}.
\newblock In Proceedings of Functional-analytic methods for partial differential equations, Lecture Notes in Mathematics, vol.~1450,
  Springer, Berlin, 1990, 122--140.

\bibitem{Morris}
\textsc{A.~Morris}.
\newblock {\em The {K}ato {S}quare {R}oot {P}roblem on submanifolds\/}.
\newblock J. Lond. Math. Soc. \textbf{86} (2011), no.~3, 879--910.

\bibitem{Ouhabaz}
\textsc{E.~M. Ouhabaz}.
\newblock Analysis of {H}eat {E}quations on {D}omains\/.
\newblock London Mathematical Society Monographs Series, vol.~31, 
Princeton University Press, Princeton NJ, 2005.

\bibitem{Patrick-Masterarbeit}
\textsc{P.~Tolksdorf}.
\newblock The {K}ato {S}quare {R}oot {P}roblem for {M}ixed {B}oundary
  {C}onditions.
\newblock Master's thesis, TU Darmstadt, Darmstadt, 2013.

\bibitem{Triebel}
\textsc{H.~Triebel}.
\newblock Interpolation {T}heory, {F}unction {S}paces, {D}ifferential
  {O}perators.  North-Holland Mathematical Library, vol.~18,
\newblock North-Holland Publishing, Amsterdam, 1978.

\bibitem{Vaeisaelae_UniformDomains}
\textsc{J.~V{\"a}is{\"a}l{\"a}}.
\newblock {\em Uniform domains\/}.
\newblock Tohoku Math. J. (2) \textbf{40} (1988), no.~1, 101--118.

\end{biblist}
\end{bibdiv}

\end{document}